\documentclass[10pt,a4paper,fleqn]{article}

\usepackage[paper=a4paper,left=25mm,right=25mm,top=25mm,bottom=25mm]{geometry}

\usepackage[utf8]{inputenc}
\usepackage[english]{babel}
\usepackage[T1]{fontenc}
\usepackage{lmodern}
\usepackage{amsmath, amstext, amsfonts, mathrsfs}
\usepackage{amsfonts}
\usepackage{amssymb}
\usepackage{booktabs}
\usepackage{makecell}
\usepackage{dsfont}
\usepackage{mathrsfs}
\usepackage{graphicx}
\usepackage{epsfig}
\usepackage[numbers]{natbib}
\usepackage{color}
\usepackage{bm}
\usepackage{verbatim}
\usepackage{stmaryrd}
\usepackage{appendix}
\usepackage[colorlinks=false,linkcolor=darkblue]{hyperref}
\usepackage{bbm}
\usepackage{enumitem}
\usepackage{nicefrac}
\usepackage{xfrac}
\usepackage{units}
\usepackage{mathtools}
\usepackage{url}
\usepackage{tikz}
\usetikzlibrary{decorations.markings, arrows.meta}
\usepackage{cancel}
\usepackage[labelfont={bf,sf},font={small},  labelsep=space]{caption}
\usepackage{acronym}
\usepackage{mathrsfs}
\usepackage{fancyhdr}
\usepackage[T1]{fontenc}
\usepackage{caption, booktabs}\usepackage{textcomp}
\usepackage{multirow}
\usepackage{tikz}
\usetikzlibrary{arrows.meta}
\usetikzlibrary{positioning,arrows}
\usetikzlibrary{matrix}
\usepackage{hologo}
\usepackage{xcolor}
\usepackage{forest}
\usetikzlibrary{matrix,fit}
\tikzset{  mleftdelimiter/.style={inner ysep=0pt, inner xsep=1ex,left delimiter=\{,label={[label distance=3mm]left:#1}}
}

\usepackage[Bjarne]{fncychap}

\usepackage{setspace}
\usepackage[squaren]{SIunits}

\usepackage{overpic}
\usepackage{subcaption}

\definecolor{light-gray}{gray}{0.95}
\definecolor{darkblue}{rgb}{0,0,.5}

\newcommand{\E}{\mathbb{E}}

\newcommand{\N}{\mathbb{N}}

\newcommand{\R}{\mathbb{R}}

\newcommand{\cC}{\mathcal{C}}

\newcommand{\cF}{\mathcal{F}}

\newcommand{\sR}{\mathsf{R}}

\newcommand{\cS}{\mathcal{S}}
\newcommand{\cU}{\mathcal{U}}

\newcommand{\cT}{\mathcal{T}}
\newcommand{\cW}{\mathcal{W}}
\newcommand{\dW}{\overrightarrow{\mathcal{W}}}

\providecommand{\keywords}[1]{\textbf{Keywords } #1}
\newcommand{\eqd}{\stackrel{\mathrm{d}}=}

\newcommand{\1}{\mathds{1}}

\newcommand{\de}{\mathrm{\,d}}

\newcommand{\Var}{\mathrm{Var}}

\definecolor{light-gray}{gray}{0.95}
\definecolor{darkblue}{rgb}{0,0,.5}
\definecolor{foxred}{rgb}{0.7, 0.11, 0.11}

\newtheorem{theorem}{Theorem}[section]
\newtheorem{proposition}[theorem]{Proposition}
\newtheorem{corollary}[theorem]{Corollary}
\newtheorem{definition}[theorem]{Definition}
\newtheorem{lemma}[theorem]{Lemma}
\newtheorem{example}[theorem]{Example}
\newtheorem{remark}[theorem]{Remark}

\newenvironment{proof}[1][{Proof:}]{\begin{trivlist}
\item[\hskip \labelsep {\bfseries #1}]}{\hfill$\blacksquare$ \end{trivlist}}

\selectfont

\author{  Jonathan Ansari\textsuperscript{1}
  \\
  \textsuperscript{1} University of Salzburg, Austria
}

\title{On a copula product characterizing independence \\and perfect functional dependence}

\begin{document}

\maketitle

\begin{abstract}
Recently studied dependence measures take values in \([0,1]\), where \(0\) characterizes independence of \(X\) and \(Y\), and \(1\) characterizes perfect functional (not necessarily monotone) dependence of \(Y\) on \(X\).
The most prominent example is Chatterjee's rank correlation, which is based on the concept of conditional independence. In contrast, we show that dependence measures such as Wasserstein correlations and rearranged dependence measures are naturally linked to the concept of conditional comonotonicity.
This connection is captured by a new copula product \(\cT(C)=C\vee\Pi\) which models conditional comonotonicity and characterizes independence and perfect functional dependence. As a main contribution, we prove that the mapping \(\cT\) acts as a reflection on the class of stochastically increasing copulas. Further, \(\cT^2=\cT\circ\cT\) projects a copula onto its increasing rearranged copula. To better understand the behavior of such dependence measures, we also study fixed points, ordering results, and continuity properties of \(\cT\).
Our results demonstrate that conditional comonotonicity is a rather intrinsic feature of dependence measures, whereas conditional independence underlying Chatterjee's rank correlation is a quite exceptional property.

\keywords{
Chatterjee's rank correlation; conditional comonotonicity;
copula product; dependence measure; optimal transport; rearrangement;
reflection; upper product; Wasserstein correlation
}
\end{abstract}

\maketitle

\thanks{2020 \textit{Mathematics Subject Classification}. Primary 60E15; Secondary 60E15; 62H05; 62H20.}

\section{Introduction and main results}\label{secIntro}

Quantifying dependence between random variables is a classical and important topic that continues to be an active field of research.
Well-known measures of association, such as the Pearson correlation, Kendall's tau or Spearman's rho, quantify the degree of positive or negative (linear) dependence between two random variables \(X\) and \(Y\).
However, they fail to detect non-linear/non-monotone dependence. For example, take \(X\) standard normal and \(Y=X^2\), then all these quantities are zero even though \(Y\) is a deterministic function of \(X\).
Motivated by the seminal papers \cite{siburg2013,chatterjee2020,chatterjee2021},
over the last decade, research has focused on new dependence measures \(\kappa(Y,X)\) that quantify the strength of functional dependence of \(Y\) on \(X\); see
\cite{chatterjee2023} for a recent review.
To be precise, such dependence measures \(\kappa\) are assumed to satisfy the following axioms:

\begin{enumerate}[label=(\Roman*)]
    \item \label{prop1} \(\kappa(Y,X) \in [0,1]\).
    \item \label{prop2}     \(\kappa(Y,X) = 0\) if and only if \(X\) and \(Y\) are independent.
    \item \label{prop3}     \(\kappa(Y,X)=1\) if and only if \(Y\) \emph{perfectly depends} on \(X\), i.e. there exists a measurable (not necessarily monotone) function \(f\) such that \(Y = f(X)\) almost surely.
\end{enumerate}
The certainly most famous and simplest dependence measure satisfying Axioms \ref{prop1}--\ref{prop3} is \emph{Chatterjee's correlation coefficient}. Its population version is defined for real-valued random variables \(Y\) and \(X\) by
\begin{align}\label{eq_chatt_xi}
    \xi(Y,X) := \frac{\int_\R \Var(P(Y\geq y\mid X)) d P^Y(y)}{\int_\R \Var(\1_{Y\geq y}) d P^Y(y)},
\end{align}
see also \cite{Gamboa-Klein-Lagnoux-2018}.
Unlike classical association measures, dependence measures such as \(\xi\) characterize independence and detect arbitrarily complex functional relationships of \(Y\) on \(X\).

In this paper, we focus on the population level of such dependence measures and study dependence structures underlying them.
To keep the setting simple, we consider bivariate random vectors \((Y,X)\) with uniform marginals on \((0,1)\).\footnote{
Extensions of our results to random vectors with arbitrary continuous distribution functions are straightforward. For \((Y',X')\) having a continuous distribution function, consider then the transformed random vectors \((Y,X)=(F_{Y'}(Y'),F_{X'}(X'))\).}
Hence, the distribution function of \((Y,X)\) is a bivariate copula, i.e., a distribution function \(C\colon [0,1]^2\to [0,1]\) that satisfies \(C(u,1) = C(1,u) = u\) for all \(u\in [0,1]\).
We write \(C_{Y,X}\) for the copula of \((Y,X)\).
Besides separating the dependence structure from the marginal distributions via Sklar's theorem (see Equation \eqref{eq_sklar}), working on the copula scale has the advantage of simple geometric visualizations on the unit square (see Figures \ref{fig:S_Gauss} and \ref{fig3}) and a natural link to doubly stochastic measures and matrices. Moreover, the uniform marginals of copulas allow the associated measures to be disintegrated with respect to the Lebesgue measure, which simplifies calculations.
We refer to Section \ref{sec2} for the concepts relevant to this article, and to the textbooks \cite{fdsempi2016,Nelsen-2006} for an introduction to copula theory.

Important copulas that we use throughout are the \emph{independence copula} \(\Pi(u,v) := uv\) and the \emph{upper Fr\'{e}chet copula} \(M(u,v) := \min\{u,v\}\), which model independence and comonotonicity,\footnote{\(X\) and \(Y\) are said to be \emph{comonotone} if there exist a random variable \(Z\) and increasing functions \(f\) and \(g\) such that \(X = f(Z)\) and \(Y= g(Z)\) almost surely. Roughly speaking, in this case, \(X\) and \(Y\) are perfectly \emph{positively} dependent.} respectively.
We call a copula \(C\) as \emph{perfect-dependence copula} if, for \((Y,X)\sim C\), the first component \(Y\) perfectly depends on the second component \(X\). Obviously, \(M\) is a perfect dependence copula (take \(Y=X\)), but it is far from the only one; for example, any shuffle-of-min copula \cite{Mikusinski-1992} is a perfect dependence copula, and, of course, \(C_{Y',X'}\) with \(Y'=f(X')\) whenever \(X'\) and \(Y'\) have a continuous distribution function.

\subsection{Conditional independence and Markov products}
For continuous distribution functions, Chatterjee's \(\xi\) in \eqref{eq_chatt_xi} depends only on the underlying copula \(C \), and it reduces to the \emph{Dette-Siburg-Stoimenov measure}
\begin{align}\label{def_xi_cop}
    \xi(C) =  6 \int_0^1\int_0^1 (\partial_2 C(u,t))^2 \de t \de u - 2;
\end{align}
see \cite{siburg2013}. Here, \(\partial_2 C\) denotes the derivative of \(C\) with respect to its second component. The popularity of Chatterjee’s \(\xi\)
is largely due to the fact that it has an estimator of a remarkably simple form. This estimator is based on nearest-neighbor statistics and exploits the representation of \(\xi\) as a product of conditionally independent random variables. More specifically, at the level of copulas, \(\xi\) can be written as
\begin{align}\label{rep_xi}
    \xi(C) = \psi(C\ast C);
\end{align}
see \cite{Fuchs-2024}. Here,
 \(\psi\) is Spearman's footrule defined by \(\psi(D) = 6 \int_0^1 D(t,t) \de t - 2\) which evaluates a copula \(D\) only on its diagonal; see e.g. \cite{Genest-2010}.
Further, \(D\ast E\) denotes the \emph{Markov product}\footnote{For notational convenience, we use here a variant of the Markov product, which is usually defined by \(\int_0^1 \partial_2 D(u,t) \partial_1 E(t,v) \de t\) reflecting the temporal structure of the underlying Markov process; see \cite{darsow-1992,fdsempi2016}.} or \emph{\(\ast\)-product} of two bivariate copulas \(D\) and \(E\), which we define by
\begin{align}\label{def:stern_product}
    D\ast E(u,v) = \int_0^1 \partial_2 D(u,t) \partial_2 E(v,t) \de t,  \qquad (u,v)\in [0,1]^2.
\end{align}
It is well-known that \(D\ast E\) is itself a copula as a particular case of the copula product in \eqref{eq:gencopprod}.
Note that, for a bivariate random vector \((U,Z)\) with distribution function \(D\), the copula derivative \(\partial_2 D(\cdot,t)\) describes the conditional distribution of \(U\) given \(Z=t\); similarly, for \((V,Z)\sim E\), the copula derivative \(\partial_2 E(\cdot,t)\) specifies the distribution of \(V\) conditional on \(Z=t\).
Hence, since the integrand in \eqref{def:stern_product} has a product form, it models conditional independence of \(U\) and \(V\) given \(Z=t\); see \cite{darsow-1992}. This explains why the Markov product is also referred to as \emph{conditional independence product}.

The representation of \(\xi\) in \eqref{rep_xi} suggests that dependence measures satisfying the Axioms \ref{prop1}--\ref{prop3} are closely related to Markov products. However, this is not the case since, outside of the diagonal, the Markov product \(C\ast C\) may attain values smaller than those of the independence copula; see Eq. \eqref{eq_cop_diagn} below.

\subsection{Conditional comonotonicity and upper products}

As we demonstrate in the present paper, dependence measures with the properties \ref{prop1}--\ref{prop3} are more naturally linked to the
copula product
\begin{align}\label{def:uppprod}
D\vee E (u,v) := \int_0^1 \min\{\partial_2 D(u,t), \partial_2 E(v,t)\} \de t, \quad  (u,v)\in [0,1]^2.
\end{align}
Here, the copula derivatives are coupled with the upper Fr\'{e}chet copula \(M(u,v) = \min\{u,v\}\). Since \(M\) models comonotonicity, the integrand in \eqref{def:uppprod} models conditional comonotonicity of \(U\) and \(V\) given \(Z=t\). We therefore denote the copula product \(D\vee E\) as \emph{conditional comonotonicity product} or, simply, as \emph{upper product} of \(D\) and \(E\). Note that also the upper product itself is a copula.

While Markov products have been studied extensively in the copula literature,
in particular through their connections to Markov processes, Markov operators,
and idempotent copulas; see e.g.
\cite{darsow-1992,Olsen-1996,Durante-Klement-Quesada-Sarkoci-2007,
Mikusinski-Taylor-2009,Lageras-2010,Trutschnig-2013,
Fernandez-Trutschnig-Tschimpke-2021}
and \cite[Chapter~5]{fdsempi2016},
there is comparatively little literature on upper products.
To the best of our knowledge, upper products of copulas are studied mainly in \cite{Ansari-2019} and
\cite{Ansari-Rueschendorf-2018,Ansari-2021} in the context of risk bounds for partially specified factor models. In \cite{Shaked-2013}, their construction appears for verifying global dependence stochastic orderings.
The concept of conditional comonotonicity---which underlies upper products---has gained increasing attention in recent years, particularly in connection with time-dependent optimal transport problems.
For instance, the Knothe–Rosenblatt coupling---which realizes in many standard models the adapted Wasserstein distance between stochastic processes---is constructed via conditionally comonotone random variables; see \cite{Backhoff-Beiglboeck-Lin-Zalashko-2017,Backhoff-Kallblad-Robinson-2025,Rueschendorf-1985}.
Hence, upper products in \eqref{def:uppprod} are naturally linked to the dependence structure underlying the Knothe-Rosenblatt coupling.

\subsection{The upper product transform \(\cT\)}

In the present paper, we investigate a specific upper product of the form
\begin{align}\label{def:operatorT}
    \cT(C) :=  C\vee \Pi.
\end{align}
On the one hand, our aim is to motivate a more detailed study of conditional comonotonicity products since, as we show, they naturally occur in the context of dependence measures.
On the other hand, we aim to establish a bridge between two recently studied classes of dependence measures---Wasserstein correlations in \cite{wiesel2022} and rearranged dependence measures in \cite{strothmann2022}.
As we show in Theorem \ref{cor_TWR} below, both classes can be described by the \emph{upper product transform} \(\cT\) in \eqref{def:operatorT}.

Before introducing the concepts of Wasserstein correlations and rearranged dependence measures, we state our first main result due to which the upper product transform \(\cT\) characterizes independence and perfect functional dependence.
Further, the upper product transform \(\cT\) acts as a projection onto the class of SI copulas.
Recall that a bivariate random vector \((V,U)\) or its copula \(C\) is said to be \emph{stochastically increasing} (SI), if, for all \(v\in (0,1)\), \(P(V\geq v\mid U=u)\) is increasing in \(u\). Note that SI a positive-dependence concept that implies positive quadrant dependence, i.e.,
\begin{align}\label{eq_si_pqd}
    \Pi(u,v) \leq C(u,v) \leq M(u,v) \qquad \text{for all } C\in \cC^\uparrow \text{ and } (u,v)\in [0,1]^2,
\end{align}
see e.g. \cite[Section 3.10]{Muller-2002}. Here, \(\cC^\uparrow\) denotes the class of bivariate SI copulas. Moreover, for a copula \(C\in \cC\), we write \(C^\uparrow\) for its \emph{increasing rearranged copula} in \eqref{def:inc_rearranged_copula}, which is SI by construction.

\begin{theorem}\label{corcharT}
    The following statements hold true:
    \begin{enumerate}[label = (\roman*)]
    \item \label{corcharT1} \(\{ \cT(C)\mid C \in \cC\} = \{ C^\uparrow \mid C\in \cC\} = \cC^\uparrow\),
        \item \label{corcharT2} For \(C\in \cC\), it is \(C = \Pi\) if and only if \(\cT(C) = M\) if and only if \(\cT^2(C) = \Pi\),
        \item \label{corcharT3} \(C\in \cC\) is a perfect dependence copula if and only if \(\cT(C) = \Pi\) if and only if \(\cT^2(C) = M\).
    \end{enumerate}
\end{theorem}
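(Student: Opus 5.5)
The whole argument rests on a pointwise formula for \(\cT(C)\) in terms of level sets of the copula derivative. Fix \(C\in\cC\) and write \(h_u(t):=\partial_2 C(u,t)\); this is the conditional distribution function \(P(Y\le u\mid X=t)\) and takes values in \([0,1]\). By the layer-cake formula \(\min\{a,v\}=\int_0^v \1_{\{a>s\}}\de s\) and Fubini,
\begin{align*}
\cT(C)(u,v)=\int_0^1\min\{h_u(t),v\}\de t=\int_0^v \lambda\bigl(\{t\in[0,1]: h_u(t)>s\}\bigr)\de s .
\end{align*}
Hence, for almost every \(v\),
\begin{align*}
\partial_2\cT(C)(u,v)=\lambda(\{h_u>v\}),
\end{align*}
which is decreasing in \(v\). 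Two consequences follow directly.

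First, \(\cT(C)\) is SI. The paper states that the upper product is a copula, and we use the convention that SI means \(\partial_2 D(u,\cdot)\) is decreasing.

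Second, \(\cT(C)\) depends on \(C\) only through the distribution of \(h_u\) under \(\lambda\). Since the increasing rearranged copula \(C^\uparrow\) from \eqref{def:inc_rearranged_copula} replaces each \(h_u\) by its decreasing rearrangement \(h_u^\ast\), which is equimeasurable with \(h_u\), we get \(\cT(C)=\cT(C^\uparrow)\).

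\textbf{The reflection identity.} The key step, and the one I expect to be the main obstacle, is to show that \(\cT(D)=D^\uparrow\) whenever \(D\) is SI; the same argument then gives \(\cT^2(D)=D\) for SI \(D\). For \(\cT(D)=D^\uparrow\), let \(D\) be SI, so \(g:=\partial_2 D(u,\cdot)\) is decreasing. Then \(v\mapsto\lambda(\{g>v\})\) is the (right-continuous) generalized inverse of \(g\), and inverting a decreasing function twice returns it up to a null set. I would carry this out carefully with right-continuous versions so that the identity holds for almost every \(t\), uniformly enough in \(u\) to integrate back to copulas. Combining this with the equimeasurability observation gives \(\cT^2(C)=\cT(C^\uparrow)=C^\uparrow\) for every \(C\in\cC\).

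\textbf{Part (i).} The SI property gives \(\{\cT(C)\}\subseteq\cC^\uparrow\), and by construction \(\{C^\uparrow\}\subseteq\cC^\uparrow\). Conversely, every \(D\in\cC^\uparrow\) satisfies \(D=D^\uparrow\), because its derivatives are already decreasing. It also satisfies \(D=\cT(\cT(D))\), so \(D\) lies in the image of \(\cT\).

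\textbf{Part (ii).} Suppose \(\cT(C)=M\). Then \(\lambda(\{h_u>v\})=\1_{\{v<u\}}\) for almost every \(v\), which forces \(h_u=u\) almost everywhere for each \(u\). This means \(\partial_2C(u,t)=u\), i.e. \(C=\Pi\); the converse is immediate. For the second equivalence, apply the first equivalence to \(D=\cT(C)\) with the roles of \(\Pi\) and \(M\) swapped. Concretely, \(\cT^2(C)=C^\uparrow=\Pi\) holds iff each rearranged \(h_u^\ast\) is the constant \(u\), iff \(h_u\equiv u\), iff \(C=\Pi\).

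\textbf{Part (iii).} Saying that \(Y\) perfectly depends on \(X\) is equivalent to the conditional laws of \(Y\) given \(X=t\) being Dirac for almost every \(t\). In turn, this is equivalent to \(h_u(t)\in\{0,1\}\) for almost every \(t\), for all \(u\). It suffices to check rational \(u\) and then use monotonicity in \(u\); this is the standard measurable-selection step producing \(f\). In that case \(\lambda(\{h_u>v\})=\lambda(\{h_u=1\})=\int_0^1 h_u=u\) for all \(v\in(0,1)\), so \(\cT(C)=\Pi\). Conversely, suppose \(\lambda(\{h_u>v\})=u\) for all \(v\in(0,1)\). Then \(h_u\) places no mass in \((0,1)\), which gives the Dirac property. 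Finally, \(\cT^2(C)=\cT(\cT(C))=M\) holds iff \(\cT(C)=\Pi\), by part (ii) applied to the copula \(\cT(C)\).
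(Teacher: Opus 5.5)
\textbf{A genuine error in the key step.} The central identity you announce is false as written. You claim \(\cT(D)=D^\uparrow\) for every SI copula \(D\). But \(D^\uparrow=D\) for SI \(D\), so this would make every SI copula a fixed point of \(\cT\), and \(D=\Pi\) already refutes it, since \(\cT(\Pi)=\Pi\vee\Pi=M\).

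The chain \(\cT^2(C)=\cT(C^\uparrow)=C^\uparrow\) fails for the same reason. By your own equimeasurability step \(\cT(C^\uparrow)=\cT(C)\), so the chain asserts \(\cT(C)=C^\uparrow\), which is false for \(C=\Pi\).

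Your double-inversion computation actually establishes two different, correct statements. For SI \(D\) and \(g=\partial_2^+D(u,\cdot)\), the level-set formula gives \(\partial_2\cT(D)(u,\cdot)=\lambda(\{g>\cdot\})=g^{-1}\) a.e. So \(\cT\) acts on \(\cC^\uparrow\) as the reflection \(\cS\), not as the identity. A second application gives \(\partial_2\cT^2(D)(u,\cdot)=(g^{-1})^{-1}=g\) a.e., i.e.\ \(\cT^2(D)=D\) on \(\cC^\uparrow\). The identity you need in part (ii) must then be derived as \(\cT^2(C)=\cT\bigl(\cT(C^\uparrow)\bigr)=\cT^2(C^\uparrow)=C^\uparrow\).

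With this correction, part (i) works, since it only uses \(\cT^2(D)=D\) for SI \(D\). Your concrete argument in part (ii) via \(C^\uparrow=\Pi\) also works. Drop the sentence about applying the first equivalence ``with the roles of \(\Pi\) and \(M\) swapped''. It is not an instance of that equivalence: for SI \(D\), the implication \(\cT(D)=\Pi\Rightarrow D=M\) would itself require (iii) together with the fact that \(M\) is the only SI perfect-dependence copula. No uniformity in \(u\) is needed anywhere, since each identity is obtained for fixed \(u\) by integrating in \(t\).

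\textbf{Comparison with the paper.} Once repaired, your proof of (i) is the paper's argument without the auxiliary operator. The paper proves \(\cS(C\vee\Pi)=C^\uparrow\) and \(\cS(C^\uparrow)=C\vee\Pi\) (Proposition~\ref{thm:main_trafo}) from exactly your level-set formula and the fact \(\lambda(\{g>u\})=g^{-1}(u)\). It then deduces \(\cT=\cS\) on \(\cC^\uparrow\) (Corollary~\ref{cor_TeqS}). You phrase the same content as \(\cT(C)=\cT(C^\uparrow)\) plus \(\cT^2=\mathrm{id}\) on \(\cC^\uparrow\).

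For (ii) and (iii) your route genuinely differs. The paper derives them from Theorem~\ref{prop:ord_T}, which identifies \(\leq_{\partial_2 S}\) with the reversed pointwise order of \(\cT(\cdot)\) and the pointwise order of \(\cT^2(\cdot)\). That theorem is proved via Hardy--Littlewood--P\'olya and the stop-loss characterization of the convex order. The paper combines it with extremality of \(\Pi\) and of perfect-dependence copulas in \(\leq_{\partial_2 S}\), and of \(\Pi,M\) in \(\leq_{lo}\) on \(\cC^\uparrow\).

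You instead read off directly from \(\partial_2\cT(C)(u,v)=\lambda(\{h_u>v\})\) that \(\cT(C)=M\) forces \(h_u=u\) a.e., and that \(\cT(C)=\Pi\) forces \(h_u\in\{0,1\}\) a.e. You get the \(\cT^2\)-statements from (ii) applied to \(\cT(C)\) and from \(\cT^2(C)=C^\uparrow\). Your argument is more elementary and proves explicitly the strictness claims that the paper's proof only asserts. The paper's route needs more machinery, but it places the characterization inside the order framework that it reuses for Proposition~\ref{cor_C2_C3} and Corollary~\ref{cor_54}.
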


By the above theorem, the upper product transform \(\cT\) characterizes the extreme dependence structures that underlie the values \(0\) and \(1\) of dependence measures satisfying Axioms \ref{prop1}--\ref{prop3}. For an illustration of the mapping \(\cT\), we refer to Figure \ref{Fig2}.

\begin{figure}
\begin{center}
    \begin{tikzpicture}
\tikzset{>={Stealth[length=8pt, width=5pt]}}

\tikzset{
    T_arrow/.style={
        ->, thick, dashed,
        decoration={
            markings,mark=at position 0.5 with {\arrow{>}}
        },
        postaction={decorate}
    }
}

\draw[ultra thick] (0,0) circle (3cm);
\draw[ultra thick, blue] (50:1.5cm) arc [start angle=50, end angle=130, radius=1.5cm];
\node[scale=2.5] at (1,-1) {$\cC$};

\draw[-, thick, red] (0,1.5) ellipse (1cm and 1.5cm);
\node[scale=1.5, red] at (0.5, 2.2) {$\cC^\uparrow$};
\node[scale=1.2, blue] at (-1.3, 1.1) {$\cC_\cT$};

\filldraw (90:3cm) circle (2pt);
\node[above] at (90:3cm) {\(M\)};

\filldraw (0,0) circle (2pt);
\node[below right] at (0,0) {$\Pi$};

\coordinate (C_point) at (-1,-2);
\filldraw (C_point) circle (2pt);
\node[below right] at (C_point) {$D$};

\coordinate (C_up_point) at (-0.2,2.3);
\filldraw (C_up_point) circle (2pt);
\node[above right] at (C_up_point) {$D^\uparrow$};

\coordinate (CvPi_point) at (-0.1,0.6);
\filldraw (CvPi_point) circle (2pt);
\node[below] at (CvPi_point) {~~$D\vee \Pi$};

\draw[T_arrow, bend left=70]
      (C_point) to node[midway, left=4pt] {\(\cT\)} (CvPi_point);

\draw[T_arrow, bend right=45]
      (CvPi_point) to node[pos=0.33, right=2pt] {\(\cT\)} (C_up_point);

\draw[T_arrow, bend right=50]
      (C_up_point) to node[pos=0.4, left=2pt] {\(\cT\)} (CvPi_point);
\end{tikzpicture}
\end{center}
\caption{Visualization of the operator \(\cT\colon \cC\to \cC^\uparrow\) in \eqref{def:operatorT}, which maps an arbitrary copula \(D\in \cC\) to the upper product \(D\vee \Pi\). Further, according to Theorem \ref{prop13}, \(\cT\) permutes the upper product \(D\vee \Pi\) and the rearranged copula \(D^\uparrow\). The outer circle represents the class \(\cC\) of bivariate copulas, while the red ellipse illustrates the subclass \(\cC^\uparrow\) of SI copulas where the mappings \(\cT\) and \(\cS\) coincide; see Corollary \ref{cor_TeqS}. The blue circular segment represents the set \(\cC_\cT\) in \eqref{def:C_T} of fixed points of \(\cT\).}
\label{Fig2}
\vspace{-3mm}
\end{figure}

\subsection{Wasserstein correlations and rearranged dependence measures}

We now consider two classes of dependence measures that are closely linked to Theorem \ref{corcharT}.
For the first class, let \(c\colon \R^2 \to \R\) be a function of the form
\begin{align}\label{def:convexcost}
    c(y,y') = h(y'-y),
\end{align}
where \(h\colon \R\to \R\) is convex, strictly convex at \(0\), and satisfies \(h(0) = 0\).
Then the \emph{Wasserstein correlation} with \emph{convex costs} \(c\) is defined as
\begin{align}\label{def:WassersteinCorrelation}
    \dW_c(Y,X)
    = \frac{\int_0^1 \cW_c(P^{Y|X=x}, \cU(0,1)) \de x}{\int_0^1\int_0^1 c(y,y') \de y \de y'},
\end{align}
where \(\cU(0,1) = P^Y\) denotes the uniform distribution on \((0,1)\), and
\(\cW_c(\nu,\nu')\) is the optimal transport cost between two distributions \(\nu\) and \(\nu'\) on \(\R\); see \eqref{def:OT}. Similar to the Wasserstein correlations with metric costs studied in \cite{wiesel2022}, \(\dW_c\) is a dependence measure that satisfies Axioms \ref{prop1}--\ref{prop3}; see \cite{Ansari-Fuchs-2026}.
Since Wasserstein correlations are, by construction, closely related to the concept of adapted Wasserstein distance, their connection to conditional comonotonicity and upper products is quite natural as follows.

\begin{proposition}[Representation of \(\dW_c\)]\label{prop:repdW}
Let \(Y,X,U,V\sim \cU(0,1)\) be random variables with \(X\) and \(V\) independent of \(U\).
\begin{enumerate}[label=(\roman*)]
    \item \label{prop:repdW1} The Wasserstein correlation in \eqref{def:WassersteinCorrelation} admits the representation
\begin{align}\label{eq:solWcor}
    \dW_c(Y,X) = \frac{\E c(F_{Y|X}^{-1}(U),U)}{\E c(U,V)}.
\end{align}
\item \label{prop:repdW2} The distribution function of \((F_{Y|X}^{-1}(U), U)\) is the upper product transform \(\cT(C_{Y,X})\).
\end{enumerate}
\end{proposition}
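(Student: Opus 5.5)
Throughout, \(F_{Y\mid X}^{-1}(U)\) is read as \(F_{Y\mid X}^{-1}(U\mid X)\), the conditional quantile function evaluated at \(U\) with \(U\) independent of \(X\). The plan is to establish \ref{prop:repdW1} via the one-dimensional optimal transport solution and the independence of \(U\) from \(X\), and then \ref{prop:repdW2} by a direct computation of the joint distribution function.

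For \ref{prop:repdW1}, I first treat the denominator. Since \(U,V\) are independent uniforms, \(\E c(U,V)=\int_0^1\int_0^1 c(y,y')\de y\de y'\). For the numerator, fix \(x\). The cost \(c(y,y')=h(y'-y)\) with \(h\) convex is submodular, so by the classical one-dimensional result (comonotone or quantile coupling, as in \eqref{def:OT}) the optimal coupling of \(P^{Y|X=x}\) and \(\cU(0,1)\) is the comonotone one. Hence
\[
\cW_c(P^{Y|X=x},\cU(0,1))=\int_0^1 c\bigl(F_{Y|X=x}^{-1}(u),u\bigr)\de u,
\]
using that the quantile function of \(\cU(0,1)\) is the identity. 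Integrating over \(x\in(0,1)\) with respect to \(P^X=\cU(0,1)\), and using the independence of \(U\) and \(X\) (Fubini), the numerator equals \(\E c(F_{Y|X}^{-1}(U),U)\). The main care needed here is measurability of \((x,u)\mapsto F_{Y|X=x}^{-1}(u)\), which follows from choosing a regular version of the conditional distribution. Integrability of \(c\) on \([0,1]^2\) is automatic, since \(h\) is convex and therefore bounded on compact sets.

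For \ref{prop:repdW2}, I compute \(P(F_{Y|X}^{-1}(U)\le u,\ U\le v)\) by conditioning on \(X=t\). For a regular version, \(P(Y\le u\mid X=t)=\partial_2 C_{Y,X}(u,t)\) for almost every \(t\). The Galois inequality \(F^{-1}(w)\le u \iff w\le F(u)\) then gives, for fixed \(t\),
\[
P\bigl(F_{Y|X=t}^{-1}(U)\le u,\ U\le v\bigr)=P\bigl(U\le \min\{\partial_2 C_{Y,X}(u,t),v\}\bigr)=\min\{\partial_2 C_{Y,X}(u,t),v\}.
\]
Since \(\partial_2\Pi(v,t)=v\), this minimum equals \(\min\{\partial_2 C_{Y,X}(u,t),\partial_2\Pi(v,t)\}\). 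Integrating over \(t\in(0,1)\) yields exactly \(C_{Y,X}\vee\Pi(u,v)=\cT(C_{Y,X})(u,v)\) by \eqref{def:uppprod} and \eqref{def:operatorT}.

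The only delicate step is the a.e.\ identification of the conditional distribution function with \(\partial_2 C_{Y,X}\) and the use of the version-independent Galois relation. Both are standard for copulas, because \(Y\) has continuous marginals. As a consistency check, the marginals come out uniform: \(U\) is uniform by assumption, and \(F_{Y|X}^{-1}(U)\) has the law of \(Y\).
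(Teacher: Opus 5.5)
Your proposal is correct and follows essentially the same route as the paper. Part (i) applies the comonotone optimal-transport identity conditionally on \(X=x\) and then integrates over \(x\). Part (ii) disintegrates over \(X=t\), applies the relation \(F^{-1}(w)\le u \iff w\le F(u)\), and uses the a.e.\ identification \(F_{Y|X=t}(u)=\partial_2 C_{Y,X}(u,t)\). Your remarks on measurability of the conditional quantile kernel and on integrability of \(c\) are details the paper leaves implicit.
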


As a second class, we consider rearranged dependence measures. Their connection to the concepts of conditional comonotonicity and upper products is less apparent.
Rearranged dependence measures are defined with respect to a functional \(\mu\) that is assumed to satisfy the Axioms \ref{prop1}--\ref{prop3} on the subclass \(\cC^\uparrow\) of bivariate SI copulas.\footnote{That is, \(\mu(C)\) takes values in \([0,1]\) for all \(C\in \cC^\uparrow\) with \(\mu(C) = 0\) if and only if \(C= \Pi\), and \(\mu(C) = 1\) if and only if \(C=M\).}
Recall that \(C^\uparrow\in \cC^\uparrow\) is the (uniquely determined) increasing rearranged copula of \(C\in \cC\); see \eqref{def:inc_rearranged_copula}.
Now, the \emph{rearranged dependence measure} \(\sR_\mu\) is defined by
\begin{align}\label{eq:RearrDepMeasure}
    \sR_\mu(Y,X) := \mu(C_{Y,X}^\uparrow),
\end{align}
and it satisfies Axioms \ref{prop1}--\ref{prop3}; see \cite[Theorem 2.4]{strothmann2022}.
Examples of \(\mu\) are Spearman's \(\rho\), Kendall's \(\tau\), Gini's \(\gamma\), and Schweizer-Wolff measures \cite[Section 2.2]{strothmann2022} as well as standardized linear functionals of the form \(C\mapsto \int f \de C\) for supermodular functions \(f\); see \eqref{eq:R_lineara}.

The following main result relates upper product transforms \(\cT(C)\) and increasing rearranged copulas \(C^\uparrow\), which underlie Wasserstein correlations in \eqref{eq:solWcor} and rearranged dependence measures in \eqref{eq:RearrDepMeasure}, respectively. We refer to Figure \ref{fig3} for a visualization.

\begin{theorem}\label{prop13}
    We have \(\cT^2(C) = C^\uparrow\) and \(\cT(C^\uparrow) = \cT(C)\) for all \(C\in \cC\).
\end{theorem}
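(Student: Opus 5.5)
The plan is to work pointwise in \(u\) and reduce both identities to elementary facts about one-dimensional decreasing rearrangements. Fix \(u\in[0,1]\) and put \(g_u(t):=\partial_2 C(u,t)\), which is defined for a.e.\ \(t\) and takes values in \([0,1]\). By the layer-cake formula \(\min\{a,v\}=\int_0^v \1_{\{a>s\}}\de s\) and Fubini,
\begin{align*}
\cT(C)(u,v)=\int_0^1\min\{g_u(t),v\}\de t=\int_0^v \lambda\{t: g_u(t)>s\}\de s .
\end{align*}
Hence \(\cT(C)(u,\cdot)\) depends on \(C\) only through the distribution function \(s\mapsto \lambda\{g_u>s\}\) of \(g_u\) under Lebesgue measure. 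Moreover, \(\partial_2\cT(C)(u,s)=\varphi_u(s):=\lambda\{g_u>s\}\) for a.e.\ \(s\), and \(\varphi_u\) is decreasing and right-continuous in \(s\).

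Next I recall the increasing rearranged copula from \eqref{def:inc_rearranged_copula}: \(C^\uparrow(u,v)=\int_0^v g_u^*(t)\de t\), where \(g_u^*\) is the decreasing rearrangement of \(g_u\). Concretely, \(g_u^*(t)=\inf\{s:\lambda\{g_u>s\}\le t\}\), which is the generalized inverse of \(\varphi_u\). Two standard properties will be used. First, \(g_u^*\) is equimeasurable with \(g_u\), i.e.\ \(\lambda\{g_u^*>s\}=\lambda\{g_u>s\}\) for all \(s\). Second, for a decreasing right-continuous function \(\varphi\) on \([0,1]\), the distribution function of \(\varphi\) is its generalized inverse, \(\lambda\{t:\varphi(t)>s\}=\varphi^{-1}(s)\).

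The second identity then follows immediately. We have \(\partial_2 C^\uparrow(u,\cdot)=g_u^*\) a.e., so the layer-cake formula gives
\begin{align*}
\cT(C^\uparrow)(u,v)=\int_0^v\lambda\{g_u^*>s\}\de s=\int_0^v\lambda\{g_u>s\}\de s=\cT(C)(u,v).
\end{align*}

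For the first identity, I apply the same formula to \(\cT(C)\) in place of \(C\), using \(\partial_2\cT(C)(u,\cdot)=\varphi_u\). This gives \(\cT^2(C)(u,v)=\int_0^v \lambda\{\varphi_u>s\}\de s\). Since \(\varphi_u\) is decreasing, \(\lambda\{\varphi_u>s\}=\varphi_u^{-1}(s)\), and this equals \(g_u^*(s)\) for all but countably many \(s\), namely away from jumps and flat pieces. Therefore \(\cT^2(C)(u,v)=\int_0^v g_u^*(s)\de s=C^\uparrow(u,v)\).

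The main technical obstacle is measure-theoretic bookkeeping rather than the idea itself. First, \(\partial_2 C(u,t)\) must be chosen as a jointly measurable version that is increasing in \(u\) for every \(t\); one way is to take it as a regular conditional distribution, which justifies Fubini and makes \(\cT(C)\) a copula. Second, one needs the a.e.\ equality \(\lambda\{\varphi_u>s\}=g_u^*(s)\), i.e.\ that the two generalized inverses of a monotone function agree off a countable set. I would handle this by checking it at continuity points of \(g_u^*\). Finally, the identities are first obtained for each fixed \(u\) and all \(v\), which is pointwise equality of the copulas, so no further null-set issue in \(u\) arises.
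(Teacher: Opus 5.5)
Your proof is correct, and it reaches the result more directly than the paper. The paper never computes $\cT^2$ itself. It introduces the reflection operator $\cS$ and proves in Proposition~\ref{thm:main_trafo} that $\cS(C\vee\Pi)=C^\uparrow$ and $\cS(C^\uparrow)=C\vee\Pi$. From this it deduces $\cT=\cS$ on $\cC^\uparrow$ (Corollary~\ref{cor_TeqS}), and then obtains both identities by composition: $\cT^2(C)=\cS(C\vee\Pi)=C^\uparrow$ and $\cT(C^\uparrow)=\cS(C^\uparrow)=C\vee\Pi$.

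You use the same three facts. The first is $\partial_2(C\vee\Pi)(u,s)=\lambda\{g_u>s\}$. The second is equimeasurability of $g_u$ and $g_u^*$. The third is $\lambda\{\varphi>s\}=\varphi^{-1}(s)$ for decreasing right-continuous $\varphi$. The difference is that you bypass $\cS$ entirely. Your layer-cake identity reduces the second claim to one line: $\cT(C)(u,\cdot)$ is invariant under rearranging $g_u$. It also gives $\partial_2\cT(C)(u,\cdot)=\varphi_u$ from an integral representation, rather than by differentiating under the integral as in Lemma~\ref{lem:propuppprod}.

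The paper's detour buys the structural fact that $\cT$ restricted to $\cC^\uparrow$ is the explicit involution $\cS$. That fact is reused for the $d_1$-isometry in Theorem~\ref{the_metric} and for the fixed-point characterization in Section~\ref{sec4}. Your computation yields it as well: for $C\in\cC^\uparrow$ one has $g_u=g_u^*$ a.e.\ and $\varphi_u=(g_u^*)^{-1}$.

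You can drop one hedge. With your definition $g_u^*(t)=\inf\{s:\varphi_u(s)\le t\}$, you have $g_u^*=\varphi_u^{-1}$ exactly. Hence $\lambda\{\varphi_u>s\}=\varphi_u^{-1}(s)=g_u^*(s)$ for every $s$, not just off a countable set. The obstacle you flag, reconciling two generalized inverses, does not arise. Joint measurability in $u$ is also not needed for the identities themselves, since you work with fixed $u$ throughout. It matters only for knowing that $\cT(C)$ and $C^\uparrow$ are copulas, which the paper takes as known.
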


\begin{figure}[tbp]
    \centering
    \begin{subfigure}{0.7\textwidth}
        \centering
        \includegraphics[width=\textwidth, trim= 30pt 10pt 30pt 10pt, clip]{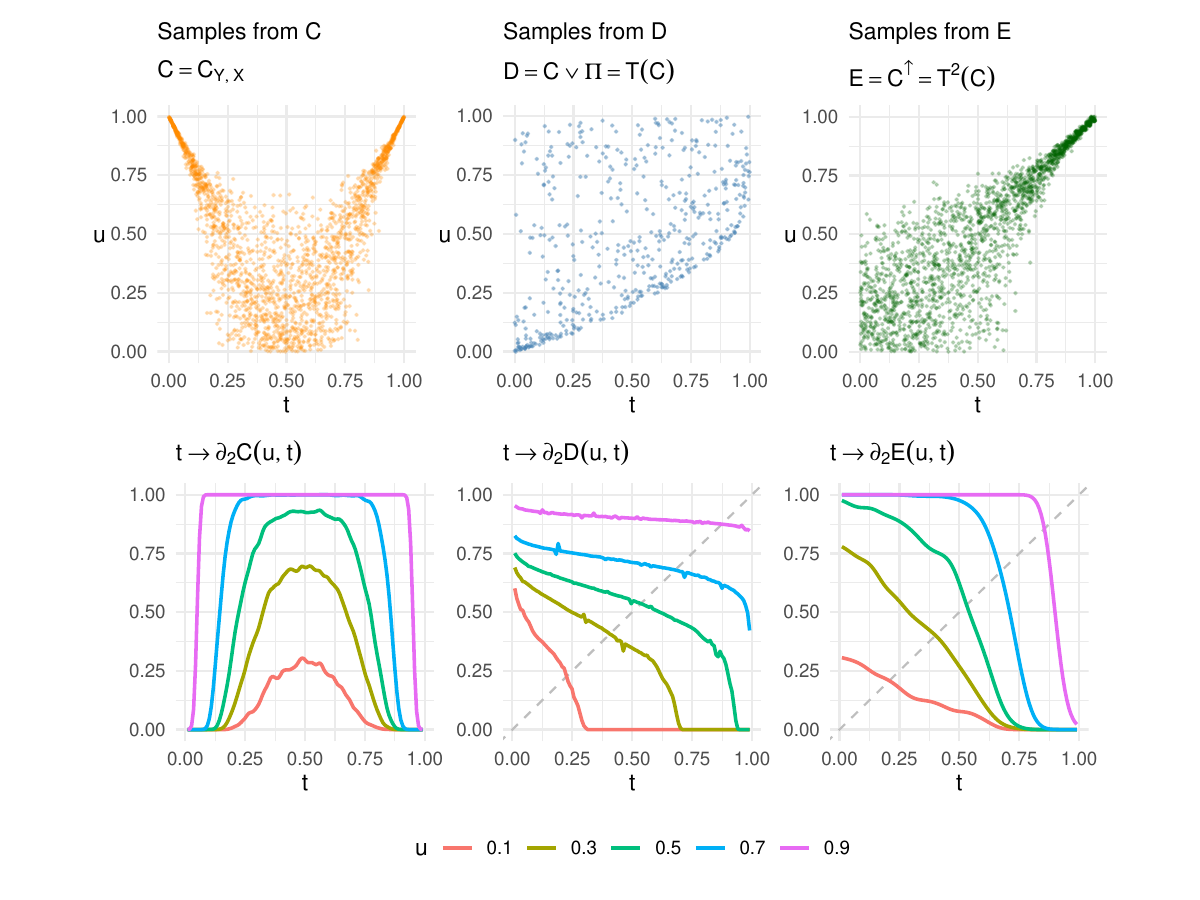}
    \caption{\(Y = X^2 + 0.5 \varepsilon\), where \(X\) is \(t\)-distributed with \(3\) degrees of freedom and independent of \(\varepsilon\sim N(0,1)\).}
    \end{subfigure}

    \begin{subfigure}{0.7\textwidth}
        \centering
        \includegraphics[width=\textwidth, trim= 30pt 10pt 30pt 0pt, clip]{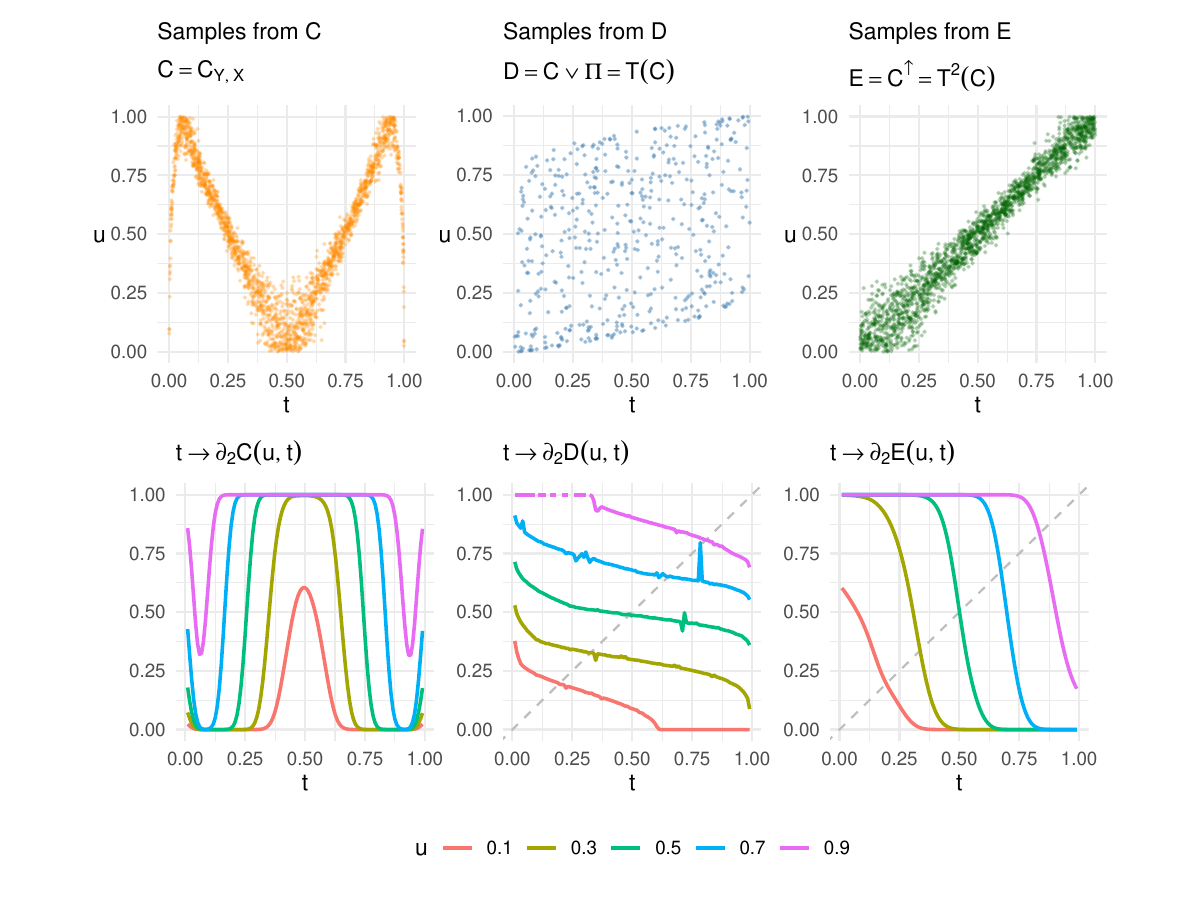}
    \caption{\(Y = \cos(2X) + 0.1 \varepsilon\), where \(X\) and \(\varepsilon\) are i.i.d. standard normal.}
    \end{subfigure}
    \caption{Samples from the copula \(C\) of a bivariate random vector \((Y,X)\) (left column), its upper product transform \(\cT(C) = C\vee \Pi\) (middle column), and its increasing rearrangement \(\cT^2(C) = C^\uparrow\) (right column). Due to Proposition \ref{thm:main_trafo} and Corollary \ref{cor_TeqS}, \(\cT\) permutes \(C\vee \Pi\) and \(C^\uparrow\) by reflecting their partial derivatives (with respect to the second component) at the main diagonal. }
    \label{fig3}
\end{figure}

As a consequence of Propsition \ref{prop:repdW} and Theorem \ref{prop13}, Wasserstein correlations and rearranged dependence measures admit a representation via the upper product transform \(\cT\) as follows.

\begin{theorem}\label{cor_TWR}
For \(C\in \cC\), let \((Y,X)\sim C\). Then we have
    \begin{align}\label{cor:T_rep}
        \dW_c(Y,X)
        &= \frac{\int_{[0,1]^2} c(u,u') \,\de \cT(C)(u,u')}{\int_0^1 \int_0^1 c(u,u') \de u' \de u} \quad
        \text{and} \qquad \sR_\mu(Y,X) = \mu(\cT^2(C)).
        \end{align}
In particular, \(\dW_c(Y,X) = 0\) (resp. \(1\)) if and only if \(\cT(C) = M\) (resp. \(\Pi\)), and \(\sR_\mu(Y,X) = 0\) (resp. \(1\)) if and only if \(\cT^2(C) = \Pi\) (resp. \(M\)).
\end{theorem}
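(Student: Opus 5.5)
We will obtain Theorem \ref{cor_TWR} directly from Proposition \ref{prop:repdW}, Theorem \ref{prop13} and Theorem \ref{corcharT}, with no new analysis.

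\emph{Wasserstein correlation.} By Proposition \ref{prop:repdW}\ref{prop:repdW1}, the numerator of $\dW_c(Y,X)$ equals $\E c(F_{Y|X}^{-1}(U),U)$. By Proposition \ref{prop:repdW}\ref{prop:repdW2}, the pair $(F_{Y|X}^{-1}(U),U)$ has distribution function $\cT(C)$, so this expectation is $\int_{[0,1]^2} c(u,u')\,\de\cT(C)(u,u')$. Since $U$ and $V$ are independent and uniform, the denominator $\E c(U,V)$ equals $\int_0^1\int_0^1 c(u,u')\de u'\de u$. This gives the first identity in \eqref{cor:T_rep}.

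\emph{Rearranged dependence measure.} By definition \eqref{eq:RearrDepMeasure}, $\sR_\mu(Y,X)=\mu(C^\uparrow)$, and Theorem \ref{prop13} gives $C^\uparrow=\cT^2(C)$. Hence $\sR_\mu(Y,X)=\mu(\cT^2(C))$.

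\emph{Characterizations of the extreme values.}
\begin{itemize}
\item For $\dW_c$: it satisfies Axioms \ref{prop1}--\ref{prop3} (as cited from \cite{Ansari-Fuchs-2026}). So $\dW_c(Y,X)=0$ if and only if $C=\Pi$, which by Theorem \ref{corcharT}\ref{corcharT2} holds if and only if $\cT(C)=M$. Likewise, $\dW_c(Y,X)=1$ if and only if $C$ is a perfect dependence copula, which by Theorem \ref{corcharT}\ref{corcharT3} holds if and only if $\cT(C)=\Pi$.
\item For $\sR_\mu$: since $\cT^2(C)=C^\uparrow\in\cC^\uparrow$ and $\mu$ satisfies the axioms on $\cC^\uparrow$, we have $\mu(\cT^2(C))=0$ if and only if $\cT^2(C)=\Pi$, and $\mu(\cT^2(C))=1$ if and only if $\cT^2(C)=M$.
\end{itemize}

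The only point needing care is the first identity: the integral against $\de\cT(C)$ must be read as an expectation under the law of $(F_{Y|X}^{-1}(U),U)$. That law is determined by its distribution function $\cT(C)$, so the substitution is justified; integrability holds because $c$ is convex, hence continuous and bounded on $[0,1]^2$. The argument is therefore essentially a composition of the earlier results.
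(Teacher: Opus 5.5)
Your proof is correct and is essentially the paper's argument: a composition of earlier results, with the extreme values read off from the axioms (for $\dW_c$ via Theorem \ref{corcharT}, and for $\sR_\mu$ directly from the axioms of $\mu$ on $\cC^\uparrow$, since $\cT^2(C)=C^\uparrow\in\cC^\uparrow$). The only difference is that the paper obtains the two representations from the $\cS$-representations in Corollaries \ref{thm:RepWasserstein} and \ref{cor:sr} and returns to $\cT$ via Corollary \ref{cor_TeqS} and Theorem \ref{prop13}. You instead go straight through Proposition \ref{prop:repdW} and the definition of $\sR_\mu$, which is shorter and avoids that detour.
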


\begin{remark}\label{rem_18}
\begin{enumerate}[label = (\alph*)]
    \item There are further dependence measures that satisfy Axioms \ref{prop1}--\ref{prop3} and admit a representation in terms of \(\cT\). For instance, the dependence measure \(\zeta_1\) in \cite{Trutschnig-2011} can be expressed via Spearman's footrule as
\begin{align*}
    \nonumber \zeta_1(C) &:= 3 \int_0^1\int_0^1 \left| \partial_2 C(u,t) - u\right| \de t \de u \\
    \nonumber &\phantom{:}= 3 \int_0^1 \int_0^1 \partial_2 C(u,t) + u - 2 \min\{\partial_2 C(u,t) , u \} \de t \de u \\
    &\phantom{:}= 3 - 6 \int_0^1 C\vee \Pi(u,u) \de u =  1 - \psi(\cT(C)).
\end{align*}
In particular, we obtain from the integral identity in \eqref{eq_form_gen_inv} that \(\zeta_1\) coincides with the \(1\)-Wasserstein correlation, i.e. for the cost function \(c(u,v) = |v-u|\) and for
\((Y,X)\sim C\), we have
\begin{align*}    \zeta_1(C) = 3\int_0^1 \int_0^1 | F_{Y|X=t}(u) - u | \de u \de t = 3 \int_0^1 \int_0^1 | F_{Y|X=t}^{-1}(u) - u | \de u \de t = \dW_1(Y,X).
\end{align*}
\item Another class of dependence measures that includes \(\zeta_1\) and Chatterjee's \(\xi\) is based on functionals of the form
\begin{align}\label{eq_rep_chat_phi}
\begin{split}
    &\int_0^1 \int_0^1 \varphi(F_{Y|X=t}(v) - v) \de t \de v = \int_0^1 \int_0^1 \varphi(\partial_2 C(v,t) - v) \de t \de v \\
    &= \int_0^1 \int_0^1 \varphi(\partial_2 C^\uparrow(v,t) - v) \de t \de v = \int_0^1 \int_0^1 \varphi(\partial_2 \cT^2(C)(v,t) - v) \de t \de v,
\end{split}
\end{align}
for convex functions \(\varphi\),
where the second equality follows with \eqref{def:intrearr}.
Similarly, the measures of sensitivity in \cite{Ansari-LFT-2023} satisfy Axioms \ref{prop1}--\ref{prop3} and are of the form
\begin{align}\label{eq_rep_chat_phi2}
\begin{split}
    &\int_0^1 \int_0^1 \int_0^1 \varphi(F_{Y|X=t}(v) - F_{Y|X=s}(v)) \de s \de t \de v \\
    &= \int_0^1 \int_0^1 \int_0^1 \varphi(\partial_2 \cT^2(C)(v,t) - \partial_2 \cT^2(C)(v,s)) \de s \de t \de v.
\end{split}
\end{align}
Hence, also the rank-based dependence measures in \eqref{eq_rep_chat_phi} and \eqref{eq_rep_chat_phi2} can be represented via the upper product transform \(\cT\).
This demonstrates that upper products with $\Pi$ underlie various classes of dependence measures, whereas the connection between Chatterjee's $\xi$ and the Markov product in \eqref{rep_xi} is rather exceptional.
\item Recall that Wasserstein correlations and rearranged dependence measures in Theorem \ref{cor_TWR}, as well as rank-based dependence measures in \eqref{eq_rep_chat_phi} and \eqref{eq_rep_chat_phi2} like Chatterjee's \(\xi\), all admit representations in terms of \(\cT\). Hence, since \(\cT\) maps surjectively onto \(\cC^\uparrow\) (see Theorem \ref{corcharT}\,\ref{corcharT1}), the dependence structure relevant to quantify the strength of functional dependence of \(Y\) on \(X\) can be reduced to SI copulas.
In other words, for such dependence measures, all relevant dependence information is captured within the subclass of SI copulas.
\end{enumerate}
\end{remark}

\subsection{Dependence measures from an order-theoretic perspective}

To understand the close connection between dependence measures and the concept of conditional comonotonicity, and to see why the conditional
independence structure underlying Chatterjee's \(\xi\) is rather special, let
us view dependence measures from the perspective of stochastic orders.

Typically, it is not just a single functional that satisfies the axioms of a
dependence measure, but rather an entire class of related functionals. The
underlying structure of dependence can often be captured by a suitable
stochastic order, which in turn gives rise to a corresponding class of
dependence measures. To make this principle precise, we formulate the following
meta-result. Any dependence order \(\preccurlyeq\) considered below is assumed
to be reflexive and transitive, but not necessarily antisymmetric.\footnote{A dependence order \(\preccurlyeq\) is \emph{reflexive} if \(C\preccurlyeq C\) holds true for all \(C\). It is \emph{transitive} if \(C\preccurlyeq D\) and \(D\preccurlyeq E\) implies \(C\preccurlyeq E\). It is \emph{antisymmetric} if \(C\preccurlyeq D\) and \(D\preccurlyeq C\) implies \(C=D\).}

\begin{lemma}\label{prop_meta} Assume that \(\preccurlyeq\) is a dependence order on \(\cC\) that characterizes \begin{enumerate}[label = (\roman*)] \item \label{prop_meta1} independence, i.e., \(C\preccurlyeq D\) for all \(D\in \cC\) if and only if \(C = \Pi\), and \item \label{prop_meta2} perfect (functional) dependence, i.e., \(D\preccurlyeq E\) for all \(D\in \cC\) if and only if \(E\) is a perfect dependence copula. \end{enumerate} Let \(\mu\colon \cC\to [0,1]\) be surjective and \(\preccurlyeq\)-increasing (i.e., \(C\preccurlyeq D\) implies \(\mu(C) \leq \mu(D)\)) with strict inequality at independence and perfect dependence (i.e., \(\Pi\preccurlyeq D\) and \(D\not\preccurlyeq \Pi\) implies \(\mu(\Pi) < \mu(D)\), and \(C\preccurlyeq D\) and \(D\not \preccurlyeq C\) for any perfect dependence copula \(D\) implies \(\mu(C) < \mu(D)\)). Then the functional \(\mu(Y,X): = \mu(C)\), \((Y,X)\sim C\), satisfies Axioms \ref{prop1}--\ref{prop3}. \end{lemma}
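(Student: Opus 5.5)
The proof is a direct order-theoretic argument that uses only the assumptions of Lemma \ref{prop_meta}. First I pin down the values of \(\mu\) at the two extremes; then Axioms \ref{prop2} and \ref{prop3} follow from the strictness conditions.

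\emph{Step 1: the values at the extremes.} By \ref{prop_meta1}, \(\Pi\preccurlyeq D\) for every \(D\in\cC\). Monotonicity then gives \(\mu(\Pi)\le\mu(D)\) for all \(D\), so \(\mu(\Pi)=\min\mu(\cC)\). Since \(\mu\) maps onto \([0,1]\), this minimum equals \(0\), so \(\mu(\Pi)=0\).

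Similarly, let \(E\) be any perfect dependence copula, for instance \(E=M\). By \ref{prop_meta2}, \(D\preccurlyeq E\) for all \(D\), so \(\mu(E)=\max\mu(\cC)=1\) by surjectivity. Axiom \ref{prop1} holds because \(\mu\) takes values in \([0,1]\).

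\emph{Step 2: Axiom \ref{prop2}.} If \(X,Y\) are independent, then \(C=\Pi\) and \(\mu(C)=0\) by Step 1. Conversely, suppose \(C\neq\Pi\). We have \(\Pi\preccurlyeq C\) by \ref{prop_meta1}.

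I claim that \(C\not\preccurlyeq\Pi\). Suppose instead that \(C\preccurlyeq\Pi\). For any \(D\), transitivity with \(\Pi\preccurlyeq D\) gives \(C\preccurlyeq D\). Then \ref{prop_meta1} forces \(C=\Pi\), which is a contradiction.

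The strictness assumption at independence now yields \(0=\mu(\Pi)<\mu(C)\).

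\emph{Step 3: Axiom \ref{prop3}.} If \(C\) is a perfect dependence copula, then \(\mu(C)=1\) by Step 1. Conversely, suppose \(C\) is not a perfect dependence copula, and fix a perfect dependence copula \(D\), say \(D=M\). By \ref{prop_meta2} we have \(C\preccurlyeq D\).

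I claim that \(D\not\preccurlyeq C\). If instead \(D\preccurlyeq C\), then for every \(E\) we get \(E\preccurlyeq D\preccurlyeq C\), so \(E\preccurlyeq C\) by transitivity. Then \ref{prop_meta2} makes \(C\) a perfect dependence copula, which is a contradiction.

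The strictness assumption at perfect dependence gives \(\mu(C)<\mu(D)=1\).

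\emph{Main obstacle.} There is no real difficulty here. The only delicate point is that \(\preccurlyeq\) need not be antisymmetric, so the non-reversibility of the relations \(\Pi\preccurlyeq C\) and \(C\preccurlyeq D\) cannot be read off directly. It has to be derived from transitivity together with the characterizations \ref{prop_meta1} and \ref{prop_meta2}, as done in Steps 2 and 3.
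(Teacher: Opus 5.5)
Your proof is correct and follows essentially the same route as the paper's. Surjectivity plus monotonicity pins down $\mu(\Pi)=0$ and $\mu(E)=1$ for every perfect dependence copula $E$. Transitivity together with conditions (i) and (ii) then rules out $C\preccurlyeq\Pi$ and $D\preccurlyeq C$, and the strictness assumptions give $0<\mu(C)$ and $\mu(C)<1$.
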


Motivated by the representation of \(\xi\) in Eq. \eqref{rep_xi}, a dependence order underlying Chatterjee's rank correlation may be defined via the diagonal of Markov products. To be precise, the diagonal of \(C\ast C\) satisfies the inequality
\begin{align}\label{eq_diag}
    u^2 =\Pi(u,u) \leq C\ast C\,(u,u) \leq M(u,u) = u \quad \text{for all } u\in [0,1].
\end{align}
The bounds in \eqref{eq_diag} are attained for all \(u\in [0,1]\) exactly in the case of independence and perfect dependence, respectively, i.e.
\begin{align*}
    u^2 = C\ast C(u,u) \quad \text{for all } u\in [0,1]\quad &\Longleftrightarrow \quad C = \Pi,\\
     u= C\ast C(u,u) \quad \text{for all } u\in [0,1]\quad &\Longleftrightarrow \quad C \text{ is a perfect dependence copula};
\end{align*}
see \cite{Fuchs-2024}. This gives the following result.

\begin{proposition}\label{cor_lc12}
    For \(C,C'\in \cC\), define \(C\preccurlyeq_1 C'\) by \(C\ast C(u,u) \leq C'\ast C'(u,u)\) for all \(u\in [0,1]\). Then \(\preccurlyeq_1\) satisfies conditions \ref{prop_meta1} and \ref{prop_meta2} of Lemma \ref{prop_meta}.
\end{proposition}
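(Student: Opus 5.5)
The plan is to read both conditions of Lemma \ref{prop_meta} directly off the diagonal inequality \eqref{eq_diag} and its two equality characterizations from \cite{Fuchs-2024}, which the paper states just before the proposition. Throughout, write \(\delta_C(u) := C\ast C(u,u)\), so that \(C\preccurlyeq_1 C'\) means \(\delta_C\le \delta_{C'}\) pointwise on \([0,1]\). Reflexivity and transitivity of \(\preccurlyeq_1\) are immediate, since it is the pullback of the pointwise order on functions. Antisymmetry is not needed.

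\emph{Condition \ref{prop_meta1}.} Suppose \(C=\Pi\). Then \(\delta_\Pi(u)=u^2\), by the characterization of the lower bound or by the direct computation \(\partial_2\Pi(u,t)=u\). By \eqref{eq_diag}, \(u^2\le \delta_D(u)\) for every \(D\in\cC\), hence \(\Pi\preccurlyeq_1 D\) for all \(D\). Conversely, suppose \(C\preccurlyeq_1 D\) for all \(D\). Taking \(D=\Pi\) gives \(\delta_C(u)\le u^2\), and \eqref{eq_diag} gives the reverse inequality. So \(\delta_C(u)=u^2\) for all \(u\), and the characterization yields \(C=\Pi\).

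\emph{Condition \ref{prop_meta2}.} Suppose \(E\) is a perfect dependence copula. The upper characterization gives \(\delta_E(u)=u\), and by \eqref{eq_diag} \(\delta_D(u)\le u\) for all \(D\), so \(D\preccurlyeq_1 E\) for all \(D\). Conversely, suppose \(D\preccurlyeq_1 E\) for all \(D\). Choose \(D=M\), which is a perfect dependence copula (take \(Y=X\)); then \(\delta_M(u)=u\). This can be checked directly: \(\partial_2 M(u,t)=\1_{t<u}\) for a.e. \(t\), so \(M\ast M(u,u)=\int_0^1 \1_{t<u}\de t=u\). Hence \(u\le \delta_E(u)\le u\), and the characterization shows that \(E\) is a perfect dependence copula.

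There is no substantial obstacle. The only care needed is to cite the two ``if and only if'' statements from \cite{Fuchs-2024} and to use \eqref{eq_diag} in both directions: it supplies the universal comparison in one direction, and combined with a suitable test copula (\(\Pi\), respectively \(M\)) it forces equality in the other.
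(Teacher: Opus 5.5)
Your proof is correct and follows the paper's route: the paper derives Proposition \ref{cor_lc12} directly from the diagonal bounds in \eqref{eq_diag} and the two equality characterizations from \cite{Fuchs-2024}. You spell out the steps the paper leaves implicit, using \(\Pi\) and \(M\) as test copulas to force equality in the converse directions.
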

Using that Spearman's footrule is a normalized functional that is strictly increasing
with respect to the pointwise order on copula diagonals, the map
\(C \mapsto \psi(C\ast C)=\xi(C)
\)
is surjective onto \([0,1]\) and strictly \(\preccurlyeq_1\)-increasing. Thus,
Proposition~\ref{cor_lc12} recovers the well-known fact that \(\xi\) satisfies Axioms~\ref{prop1}--\ref{prop3}. More generally, suitably
weighted functionals of the diagonal of \(C\ast C\) also characterize
independence and perfect dependence.

However, the class of dependence measures that can be generates via the \(\preccurlyeq_1\)-order in this way
is comparatively limited. Indeed, away from the diagonal, the Markov product
no longer characterizes independence through a pointwise lower bound: there
exist \(C\in\cC\) and \((u,v)\in[0,1]^2\) such that
\begin{align}\label{eq_cop_diagn}
    C\ast C(u,v)< \Pi(u,v)=uv;
\end{align}
see \cite{Fuchs-2024}. In contrast, the upper-product transform \(\cT\) allows one to consider
functionals defined on the entire unit square \([0,1]^2\), thereby giving rise
to substantially richer classes of dependence measures. The following result
is a direct consequence of Theorem~\ref{corcharT} and
Eq.~\eqref{eq_si_pqd}.

\begin{proposition}\label{cor_C2_C3}
     For \(C,C'\in \cC\), define the dependence order
     \begin{enumerate}[label = (\roman*)]
         \item \(C\preccurlyeq_2 C'\) by \(\cT(C)(u,v) \geq \cT(C')(u,v)\) for all \((u,v)\in [0,1]^2\),
         \item \(C\preccurlyeq_3 C'\) by \(\cT^2(C)(u,v) \leq \cT^2(C')(u,v)\) for all \((u,v)\in [0,1]^2\).
     \end{enumerate}
      Then \(\preccurlyeq_2\) and \(\preccurlyeq_3\) satisfy Conditions \ref{prop_meta1} and \ref{prop_meta2} of Lemma \ref{prop_meta}.
\end{proposition}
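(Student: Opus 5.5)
The argument uses only Theorem \ref{corcharT} and the PQD sandwich \eqref{eq_si_pqd}. Reflexivity and transitivity of \(\preccurlyeq_2\) and \(\preccurlyeq_3\) are immediate, since both orders are pulled back from the pointwise order on \([0,1]^2\), reversed in the case of \(\preccurlyeq_2\). It therefore remains to verify Conditions \ref{prop_meta1} and \ref{prop_meta2} of Lemma \ref{prop_meta} for each order.

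\emph{The order \(\preccurlyeq_2\).} By Theorem \ref{corcharT}\,\ref{corcharT1}, \(\cT(D)\in\cC^\uparrow\) for every \(D\in\cC\). Hence \eqref{eq_si_pqd} gives
\[
\Pi\le\cT(D)\le M \quad\text{pointwise.}
\]
For Condition \ref{prop_meta1}:
\begin{itemize}
\item If \(C=\Pi\), then \(\cT(C)=M\ge\cT(D)\) for all \(D\) by Theorem \ref{corcharT}\,\ref{corcharT2}, so \(\Pi\preccurlyeq_2 D\) for all \(D\).
\item Conversely, suppose \(C\preccurlyeq_2 D\) for all \(D\). Choose \(D=\Pi\); then \(\cT(C)\ge\cT(\Pi)=M\). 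Combined with \(\cT(C)\le M\), this forces \(\cT(C)=M\), and Theorem \ref{corcharT}\,\ref{corcharT2} yields \(C=\Pi\).
\end{itemize}
For Condition \ref{prop_meta2}:
\begin{itemize}
\item If \(E\) is a perfect dependence copula, then \(\cT(E)=\Pi\le\cT(D)\) for all \(D\) by Theorem \ref{corcharT}\,\ref{corcharT3}, so \(D\preccurlyeq_2 E\) for all \(D\).
\item Conversely, suppose \(D\preccurlyeq_2 E\) for all \(D\). Take \(D=M\), which is a perfect dependence copula, so \(\cT(M)=\Pi\). This gives \(\cT(E)\le\Pi\), hence \(\cT(E)=\Pi\). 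Theorem \ref{corcharT}\,\ref{corcharT3} then shows that \(E\) is a perfect dependence copula.
\end{itemize}

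\emph{The order \(\preccurlyeq_3\).} The argument is symmetric. Since \(\cT^2(D)=\cT(\cT(D))\in\cC^\uparrow\), we again have
\[
\Pi\le\cT^2(D)\le M \quad\text{pointwise.}
\]
For Condition \ref{prop_meta1}:
\begin{itemize}
\item If \(C=\Pi\), then \(\cT^2(C)=\Pi\) by Theorem \ref{corcharT}\,\ref{corcharT2}, so \(\Pi\preccurlyeq_3 D\) for all \(D\).
\item Conversely, testing against \(D=\Pi\) gives \(\cT^2(C)\le\Pi\), hence \(\cT^2(C)=\Pi\). Theorem \ref{corcharT}\,\ref{corcharT2} yields \(C=\Pi\).
\end{itemize}
For Condition \ref{prop_meta2}:
\begin{itemize}
\item If \(E\) is a perfect dependence copula, then \(\cT^2(E)=M\) by Theorem \ref{corcharT}\,\ref{corcharT3}, so \(D\preccurlyeq_3 E\) for all \(D\).
\item Conversely, testing against \(D=M\), for which \(\cT^2(M)=M\), gives \(\cT^2(E)\ge M\), hence \(\cT^2(E)=M\). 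Theorem \ref{corcharT}\,\ref{corcharT3} shows that \(E\) is a perfect dependence copula.
\end{itemize}

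There is no genuine obstacle here. The only point needing care is the choice of test copula in each "only if" direction. One must use \(\Pi\) for independence and \(M\) for perfect dependence, since \(M\) is itself a perfect dependence copula. One must also note that the SI property of \(\cT(D)\) and \(\cT^2(D)\), from Theorem \ref{corcharT}\,\ref{corcharT1}, is what makes the sandwich \eqref{eq_si_pqd} available to turn the one-sided inequalities into equalities.
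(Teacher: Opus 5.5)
Your proof is correct and takes the paper's approach. The paper's proof is the single line that the claim follows from Theorem~\ref{corcharT} and the inequality \eqref{eq_si_pqd}; you have written this out in full, choosing the right test copulas \(\Pi\) and \(M\) in each converse direction.
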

Using that \(\cT(C)\) is SI for all \(C\), any normalized copula functional that is strictly increasing/decreasing on \(\cC^\uparrow\) with respect to the pointwise order of copulas implies a dependence measure that satisfies \ref{prop1}--\ref{prop3}. Important examples are Wasserstein correlations and rearranged dependence measures discussed above. Further examples include integrals of supermodular and submodular functions, respectively; see Eq. \eqref{eq_lo_supsub}.
In Theorem \ref{prop:ord_T}, we show that the recently investigated, rearrangement-based \(\leq_{\partial_2 S}\)-order is equivalent to the orders \(\preccurlyeq_2\) and \(\preccurlyeq_3\) and implies the order \(\preccurlyeq_1\). In particular, the \(\leq_{\partial_2 S}\)-order underlies Chatterjee's rank correlation, Wasserstein correlations, rearranged dependence measures, and the measures of sensitivity  of the type \eqref{eq_rep_chat_phi} and \eqref{eq_rep_chat_phi2}. We refer to
Section \ref{sec5} for details.

\subsection{Structure of the paper}

The rest of the paper is organized as follows.
To establish the link between Wasserstein correlations and rearranged dependence measures via the upper product transform \(\cT\), we introduce in Section \ref{sec2} a new copula operator \(\cS\). This operator serves as an auxiliary tool in our analysis of the structural properties of \(\cT\).
It transforms an SI copula into another SI copula by reflecting the partial derivatives at the main diagonal; see Definition \ref{def_reflectS} and Figure \ref{fig:S_Gauss}. As we show in Proposition \ref{thm:main_trafo}, \(\cS\) permutes upper products \(C\vee \Pi\) and rearranged copulas \(C^\uparrow\), and thus it connects Wasserstein correlations with rearranged dependence measures; see Corollaries \ref{thm:RepWasserstein} and \ref{cor:sr}.

In Sections \ref{sec3}-\ref{sec6}, we study the upper product transform \(\cT\) in more detail.
Due to Corollary \ref{cor_TeqS}, \(\cT\) reduces to the reflection operator \(\cS\) when it is restricted to SI copulas, and thus \(\cT\) generalizes \(\cS\).
In Theorem \ref{the_metric}, we establish contraction and isometry properties of \(\cT\).
Noting that \(\cT(M) = \Pi\) and \(\cT(\Pi) = M\) (recall Theorem \ref{corcharT}), we study in Section \ref{sec4} necessary and sufficient conditions for fixed points of \(\cT\), i.e. for copulas that satisfy \(\cT(C) = C\). Such fixed points are interesting because they yield the identity \(\dW_c + \sR_{-c} = 1\); see Proposition \ref{prop:motiv_fp}.
In Theorem \ref{lem:SI_cop_construction}, we construct infinitely many SI copulas that are fixed points of \(\cT\). The underlying construction method is based on convolutions and may be of independent interest.
In Section \ref{sec5}, we describe the behavior of the transformations \(\cT\) and \(\cT^2\) through the recently studied \(\leq_{\partial_2 S}\)-order, a dependence order that can be verified in many copula models; see \cite{Ansari-Rockel-2023}.
In Section \ref{sec6}, we provide copula-based continuity conditions for \(\cT\) which, in turn, yield continuity properties for Wasserstein correlations and rearranged dependence measures. This is particularly relevant since dependence measures satisfying Axioms \ref{prop1} -- \ref{prop3} fail to be weakly continuous; see \cite{buecher2024}.
The proofs of Section \ref{secIntro} rely on the concepts developed in Sections \ref{sec3}--\ref{sec6} and are provided in Section \ref{sec_pr1}.
We conclude in Section \ref{sec7} by highlighting the role of the upper product transform $\cT$ as a fundamental concept that underlies the construction of dependence measures.

\section{Rearrangements and upper products}\label{sec2}

In Section, we establish a connection between increasing rearranged copulas and the upper product transform \(\cT\) via a new reflection operator \(\cS\). First, however, we recall the basic dependence concepts relevant to this paper.

\subsection{Basic dependence concepts}

A bivariate copula \(C\) is a distribution function on \([0,1]^2\) with standard uniform marginals.
By Sklar's Theorem, every bivariate distribution function \(F\) on \(\R^2\) can be decomposed into
\begin{align}\label{eq_sklar}
    F(x_1,x_2) = C (F_1(x_1),F_2(x_2)), \qquad (x_1,x_2)\in \R^2,
\end{align}
where \(C\) is a bivariate copula and where \(F_1\) and \(F_2\) denote the first and second marginal distribution functions of \(F\); see e.g. \cite{Nelsen-2006}.
If \(F\) is continuous, the copula \(C\) in \eqref{eq_sklar} is unique and thus fully captures the dependence structure of \(F\).
Recall that \(\cC\) denotes the class of bivariate copulas and \(\cC^\uparrow\) the class of bivariate SI copulas. The independence copula \(\Pi(u,v) = uv\) and the upper Fr\'{e}chet copula \(M(u,v) = \min\{u,v\}\) are SI, and they describe the extreme cases of positive dependence. Specifically, \(M \geq_{lo} C\) for all \(C\in \cC\), while \(\Pi \leq_{lo} C\) holds for all \(C\in \cC^\uparrow\). Here, \(\leq_{lo}\) denotes the \emph{lower orthant order} (i.e., the pointwise order) of copulas, that is, for \(D,E\in \cC\), \(D\leq_{lo} E\) is defined by \(D(u,v)\leq E(u,v)\) for all \((u,v)\in [0,1]^2\).

Throughout this work, we make frequent use of copula derivatives, adopting the following characterization of copulas; see e.g. \cite[Lemma 1]{Ansari-Rockel-2026}.

\begin{lemma}[A characterization of copulas]\label{charSIcop}
    A function \(C\colon [0,1]^2 \to [0,1]\) is a bivariate copula if and only if there exists a family \((h_v)_{v\in [0,1]}\) of measurable functions \(h_v\colon [0,1]\to [0,1]\) such that
    \begin{enumerate}[label=(\roman*), font=\upshape]
        \item \label{charSIcop1} \(C(v,u) = \int_0^u h_v(t) \de t\) for all \((u,v)\in [0,1]^2,\)
        \item \label{charSIcop2} \(h_v(t)\) is increasing in \(v\) for all \(t\in [0,1],\)
        \item \label{charSIcop3} \(\int_0^1 h_v(t) \de t = v\) for all \(v\in [0,1]\).
    \end{enumerate}
\end{lemma}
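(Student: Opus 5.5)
The plan is to prove both directions of Lemma~\ref{charSIcop} by building the family \((h_v)\) from, or reading it off as, the partial derivative of \(C\) in the second argument. The main tools are the disintegration of a doubly stochastic measure and the \(2\)-increasing property.

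For the direction ``copula \(\Rightarrow\) family'', I will let \((V,U)\sim C\), so that \(C(v,u)=P(V\le v, U\le u)\). Since \(U\) is uniform, disintegration with respect to Lebesgue measure gives a regular conditional distribution \(K(t,\cdot)\) of \(V\) given \(U=t\). I then set \(h_v(t):=K(t,[0,v])\), which is jointly measurable in \(t\) and takes values in \([0,1]\).
\begin{itemize}
\item Property \ref{charSIcop1} is the disintegration identity \(P(V\le v,U\le u)=\int_0^u K(t,[0,v])\de t\).
\item Property \ref{charSIcop2} holds because distribution functions are monotone. Since \(K(t,\cdot)\) is a probability measure for every \(t\), the monotonicity in \(v\) holds for every \(t\) simultaneously, not merely almost everywhere. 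This is why I use the regular version rather than \(\partial_2 C\), which exists only a.e.
\item Property \ref{charSIcop3} follows by taking \(u=1\) in \ref{charSIcop1} and using \(C(v,1)=v\).
\end{itemize}
If one prefers to stay with \(\partial_2 C\), one can define \(h_v(t)=\limsup_{n} n\,(C(v,t+1/n)-C(v,t))\) for \(v\) rational and extend by right limits in \(v\). This is equivalent but more fiddly, so I will go with the kernel.

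For the converse, I am given \((h_v)\) and define \(C\) by \ref{charSIcop1}. The checks are as follows.
\begin{itemize}
\item Boundary values: \(C(v,0)=0\); \(C(v,1)=v\) by \ref{charSIcop3}; \(C(0,u)=0\) because \(\int_0^1 h_0=0\) and \(h_0\ge0\) force \(h_0=0\) a.e.; and \(C(1,u)=u\) because \(\int_0^1 h_1=1\) and \(h_1\le1\) force \(h_1=1\) a.e.
\item The \(2\)-increasing property: for \(v\le v'\) and \(u\le u'\), the rectangle mass is \(\int_u^{u'}(h_{v'}-h_v)(t)\de t\ge0\) by \ref{charSIcop2}.
\end{itemize}
A function on \([0,1]^2\) that is grounded, has uniform margins and is \(2\)-increasing is a copula. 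Measurability of each \(h_v\) suffices for all of these integrals.

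The only delicate step is the measure-theoretic one in the forward direction: monotonicity must hold for all \(t\) rather than for almost every \(t\) separately for each \(v\). Choosing the regular conditional kernel, which exists on Polish spaces, resolves this directly.
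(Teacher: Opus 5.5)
Your proof is correct in both directions. The paper does not prove Lemma \ref{charSIcop} itself: it imports it from \cite[Lemma 1]{Ansari-Rockel-2026}. The remark following the lemma indicates the intended choice \(h_v=\partial_2 C(v,\cdot)\) (a.e.), and says the paper deliberately uses copula derivatives rather than Markov kernels. So the meaningful comparison is between your kernel route and a derivative route. The converse direction is the same either way: groundedness, uniform margins, and rectangle mass \(\int_u^{u'}(h_{v'}-h_v)\de t\geq 0\).

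In the forward direction, the step you flag as delicate is easier on the derivative side than you suggest. For every \(t<1\), every sufficiently large \(n\) and every \(v\leq v'\), the \(2\)-increasing property gives
\[
n\,(C(v,t+1/n)-C(v,t))\leq n\,(C(v',t+1/n)-C(v',t)).
\]
Hence \(h_v(t):=\limsup_n n\,(C(v,t+1/n)-C(v,t))\) is increasing in \(v\) for every \(t\) and every \(v\in[0,1]\). No restriction to rational \(v\) and no right limits are needed.

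Since \(C(v,\cdot)\) is nondecreasing and \(1\)-Lipschitz, this \(h_v\) is measurable with values in \([0,1]\). It coincides with \(\partial_2 C(v,\cdot)\) almost everywhere, so (i) and (iii) follow from absolute continuity (set \(h_v(1):=v\), say).

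This route needs only Lebesgue's differentiation theorem and produces exactly the version the paper works with. Your route instead needs the existence of regular conditional distributions. In exchange it gives a family in which \(v\mapsto h_v(t)\) is a genuine right-continuous distribution function for every \(t\), a property the paper explicitly notes it does not require.
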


\begin{remark}
\begin{enumerate}[label=(\alph*), font=\upshape]
    \item
    For \(X,Y,Z\sim \cU(0,1)\), the copula of \((Y,X)\) can be written as a product of the copulas \(D:= C_{Y,Z}\), \(E:= C_{X,Z}\), and \(C_t := C_{(Y,X)\mid Z=t}\), \(t\in (0,1)\), through
    \begin{align}\label{eq:gencopprod}
        C_{Y,X}(u,v) = \int_0^1 C_t\left(\partial_2 D(u,t), \partial_2 E(v,t)\right) \de t,
    \end{align}
see \cite[Theorem 2.7]{Ansari-2021}. If, for all \(t\in [0,1]\), \(C_t = \Pi\) or, equivalently, \(Y\) and \(X\) are conditionally independent given \(Z=t\), the right-hand side of \eqref{eq:gencopprod} reduces to the Markov product \(D\ast E\) in \eqref{def:stern_product}.
If, for all \(t\in [0,1]\), \(C_t = M\) or, equivalently, \(Y\) and \(X\) are conditionally comonotone given \(Z=t\), the right-hand side of \eqref{eq:gencopprod} is the upper product \(D\vee E\) in \eqref{def:uppprod}.
Since \(C_t\leq_{lo} M\) for all \(t\), it immediately follows that \(C_{Y,X}\leq_{lo} D\vee E\) (in particular, \(D\ast E \leq_{lo} D\vee E\)). Hence, upper products describe the worst-case dependence structure in models where the bivariate marginal distributions \((X,Z)\) and \((Y,Z)\) are known but the conditional dependencies \((X,Y)\mid Z=t\) are not specified; see \cite{Ansari-Rueschendorf-2018,Bernard-Rueschendorf-Vanduffel-Wang-2017}.
\item The function \(h_v(t)\) in Lemma \ref{charSIcop} coincides with the copula derivative \(\partial_2 C(v,t)\) for Lebesgue-almost all \((v,t)\in [0,1]^2\). Hence, \((h_v)_{v\in [0,1]}\) determines a family of conditional distribution functions. Note that, in general, neither \(v\mapsto h_v(t)\) nor \(v\mapsto \partial_2 C(v,t)\) is right-continuous.
To avoid redundant notation, we utilize copula derivatives rather than right-continuous modifications or Markov kernels.
This approach is sufficient for our purposes, as any discrepancies occur only on null sets and thus do not affect the relevant integrals.
\end{enumerate}
\end{remark}

The construction of the rearranged dependence measure \(\sR_\mu\) in \eqref{eq:RearrDepMeasure} is based on the concept of rearrangements. To be precise, the \emph{(increasing) rearranged copula} \(C^\uparrow\) of a bivariate copula \(C\) is defined by
\begin{align}\label{def:inc_rearranged_copula}
C^\uparrow(v,u) := \int_0^u h_v^*(t) \de t \quad \text{for } h_v(t) = \partial_2 C(v,t);
\end{align}
see \cite{Ansari-2021,strothmann2022}.
Here, \(h^*\) denotes the \emph{decreasing} rearrangement of a measurable function \(h\colon (0,1)\to [0,1]\), i.e. the (essentially with respect to the Lebesgue measure \(\lambda\)) uniquely determined decreasing function \(h^*\) such that
\begin{align}\label{def:intrearr}
    \lambda( h^*\leq y) = \lambda(h \leq y) \quad \text{for all } y \in [0,1].
\end{align}
Note that \(h_v^*\) is increasing in \(v\) but decreasing in \(t\).
Since for all \(u\in [0,1]\), \(t\mapsto \partial_2 C^\uparrow(u,t)\) is decreasing outside a \(\lambda\)-null set, \(C^\uparrow\) is SI; see \cite[Proposition 3.17]{Ansari-2021}. Hence, \(\sR_\mu\) in \eqref{eq:RearrDepMeasure} is well-defined. An overview of well-known families of SI copulas is given in \cite{Ansari-Rockel-2023}. Note that \(C\leq_{lo} C^\uparrow\) with equality if and only if \(C\) is SI, that is,
\begin{align}\label{eq:CCSI}
    C = C^\uparrow \qquad \Longleftrightarrow \qquad C\in \cC^\uparrow;
\end{align}
see \cite[Proposition 3.17]{Ansari-2021}.

We make use of the following lemma which gives some elementary properties of upper products.

\begin{lemma}[Some properties of upper products]\label{lem:propuppprod}
For all \(C,D\in \cC\), we have
\begin{enumerate}[label=(\roman*)]
\item \label{lem:propuppprod1} \(C\vee M = C\),
\item \label{lem:propuppprod2} \(C \vee D = M\) if and only if \(C=D\),
\item \label{lem:propuppprod3} \(C\vee \Pi\) is SI.
\end{enumerate}
\end{lemma}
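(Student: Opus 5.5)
All three parts reduce to the definition \(D\vee E(u,v)=\int_0^1 \min\{\partial_2 D(u,t),\partial_2 E(v,t)\}\de t\) together with Lemma \ref{charSIcop}. Throughout, write \(h_u(t):=\partial_2 C(u,t)\) and \(g_v(t):=\partial_2 D(v,t)\). These are conditional distribution functions: increasing in the first argument, with values in \([0,1]\), and with \(\int_0^1 h_u(t)\de t=u\).

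For \ref{lem:propuppprod1}, note that \(\partial_2 M(v,t)=\1_{\{t<v\}}\) for almost all \(t\). Since \(0\le h_u\le 1\), we get \(\min\{h_u(t),\1_{\{t<v\}}\}=h_u(t)\1_{\{t<v\}}\). Integrating gives
\[
C\vee M(u,v)=\int_0^v h_u(t)\de t=C(u,v).
\]

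For \ref{lem:propuppprod2}, the direction \(C=D\Rightarrow C\vee D=M\) is immediate. In that case \(\min\{h_u(t),h_v(t)\}=h_{\min\{u,v\}}(t)\) by monotonicity in the first argument, so the integral equals \(\min\{u,v\}\).

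For the converse, assume \(C\vee D=M\). Evaluating at the diagonal gives \(\int_0^1\min\{h_u(t),g_u(t)\}\de t=u\) for every \(u\). Both \(\int h_u=u\) and \(\int g_u=u\), and \(\min\{h_u,g_u\}\le h_u\) and \(\le g_u\). So the nonnegative functions \(h_u-\min\{h_u,g_u\}\) and \(g_u-\min\{h_u,g_u\}\) integrate to zero. Hence \(h_u=g_u\) for \(\lambda\)-almost every \(t\), for each fixed \(u\). Integrating over \(t\in[0,v]\) then gives \(C(u,v)=D(u,v)\) for all \((u,v)\). Only the diagonal values of \(C\vee D\) are needed, and no null-set issue arises because we integrate for fixed \(u\).

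For \ref{lem:propuppprod3}, we have \(\partial_2\Pi(v,t)=v\). Hence \(C\vee\Pi(u,v)=\int_0^1\min\{h_u(t),v\}\de t\). To expose SI, I would rewrite this in the second argument as a function of the first. The key identity is
\[
\min\{a,v\}=\int_0^v \1_{\{a>s\}}\de s .
\]
Fubini then gives
\[
C\vee\Pi(u,v)=\int_0^v \lambda\bigl(\{t: h_u(t)>s\}\bigr)\de s .
\]
So the derivative in the second coordinate is \(k_u(s):=\lambda(h_u>s)\), a measurable function of \(s\).

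I would check the three conditions of Lemma \ref{charSIcop} for this family. First, \(k_u\) is increasing in \(u\) because \(h_u\) is. Second, \(\int_0^1 k_u(s)\de s=\int_0^1 h_u=u\). Third, \(k_u(s)\) is decreasing in \(s\). So \(C\vee\Pi\) is a copula whose conditional distribution functions \(u\mapsto\partial_2(C\vee\Pi)(u,s)=k_u(s)\) decrease in \(s\). That is precisely the SI property: \(P(V\ge u\mid U=s)=1-k_u(s)\) increases in \(s\).

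The main subtlety is the bookkeeping on null sets. The derivative must be identified almost everywhere, and the SI definition requires monotonicity outside a null set, handled as in the paper's remark after Lemma \ref{charSIcop}. I would handle this by working with the explicit version \(k_u(s)\), which is monotone for every \(s\), rather than with \(\partial_2\) directly.
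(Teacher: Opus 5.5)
Your proof is correct. For (iii) you reach the same key identity as the paper: \(\lambda(\{t\mid \partial_2 C(u,t)>s\})\) is the second-coordinate derivative of \(C\vee\Pi(u,\cdot)\), and it is decreasing in \(s\). The routes to it differ.

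\textbf{Route to (iii).} The paper differentiates \(\int_0^1\min\{\partial_2 C(u,t),s\}\de t\) in \(s\) under the integral sign, using dominated convergence. You integrate instead, via the layer-cake identity and Fubini. Your version gives an exact representation \(C\vee\Pi(u,v)=\int_0^v k_u(s)\de s\) with an everywhere-defined decreasing density.

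\textbf{What your route buys.} It avoids the exceptional set in the paper's step. The pointwise derivative of \(\min\{a,s\}\) in \(s\) does not exist at \(s=a\). So the paper's interchange is valid only at levels \(s\) with \(\lambda(\{t\mid \partial_2 C(u,t)=s\})=0\), i.e.\ for all but countably many \(s\). The paper's phrase ``for \(\lambda\)-almost all \(v\)'' misplaces this. Your route also checks, via Lemma \ref{charSIcop}, that \(C\vee\Pi\) is a copula.

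\textbf{Parts (i) and (ii).} The paper gives no argument and cites \cite{Ansari-Rueschendorf-2018}. Your direct proofs are self-contained and correct. Your converse in (ii) uses only the diagonal values of \(C\vee D\) together with \(\int_0^1\partial_2 C(u,t)\de t=\int_0^1\partial_2 D(u,t)\de t=u\). It thus shows that \(C\vee D(u,u)=u\) for all \(u\) already forces \(C=D\).
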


\begin{proof}    Statements \ref{lem:propuppprod1} and \ref{lem:propuppprod2} are given in \cite[Proposition 2.4]{Ansari-Rueschendorf-2018}. For the proof of statement \ref{lem:propuppprod3},
    fix \(v\in (0,1)\) and set \(f_v = \partial_2 C(v,\cdot)\). Then, we obtain
    \begin{align}\label{eqlem:propuppprod}
    \begin{split}
        \partial_2 (C\vee \Pi)(v,u) &= \lim_{h\to 0} \int_0^1 \frac{\min \{ f_v(t), u+h\} - \min\{f_v(t) , u\}}{h} \de t \\
        &=  \int_0^1 \frac{d}{d u} \min \{f_v(t), u\} \de t
        = \int_0^1 \1_{\{ f_v(t) > u\}} \de t
        = \lambda(\{t \mid \partial_2 C(v,t) > u\}),
    \end{split}
    \end{align}
    where we apply dominated convergence for the second equality using that the integrand is bounded by \(1\). The third equality holds true for \(\lambda\)-almost all \(v\in (0,1)\) using that \(f_v\) can have at most countably many constant segments.
    Since the right-hand term of \eqref{eqlem:propuppprod} is decreasing in \(u\), the statement follows.
\end{proof}

\begin{remark}\label{rem_prop_upp_prod}
    Upper products behave fundamentally differently from Markov products. For instance, the latter satisfies
    \(C\ast M =  C\) and \( C \ast \Pi = \Pi\) while \(C\vee M = C\) and \(C\vee C = M\) by Lemma \ref{lem:propuppprod}.
    We refer to \cite{Ansari-Rueschendorf-2018} for further properties of upper products, while a comprehensive overview of Markov products can be found in \cite[Section 5]{fdsempi2016}.
\end{remark}

As noted in the introduction, Wasserstein correlations are closely linked to the concept of conditional comonotonicity and thus to upper products.
To make this precise, recall the classical result that, for submodular\footnote{A function \(f\colon \R^2 \to \R\) is \emph{submodular} (supermodular) if \(f(x,y) + f(x',y') \leq (\geq) f(x,y')+ f(x',y)\) for all \(x \leq x'\) and \(y \leq y'\).} cost functions \(c\), the comonotone coupling \((F_\nu^{-1}(U),F_{\nu'}^{-1}(U))\) solves the optimal transport problem between two distributions \(\nu\) and \(\nu'\) on \(\R\), i.e.
\begin{align}\label{def:OT}
    \cW_c(\nu,\nu') := \inf_{\pi \in \mathrm{Cpl}(\nu,\nu')} \int c(y,y') \de \pi(y,y') = \E c(F_\nu^{-1}(U),F_{\nu'}^{-1}(U));
\end{align}
see \cite[Theorem 3.1.2]{Ru-1998}.
Here, \(\mathrm{Cpl}(\nu,\nu')\) denotes the set of couplings between \(\nu\) and \(\nu'\), i.e., the set of distributions on \(\R^2\) with first and second marginal \(\nu\) and \(\nu'\), respectively. Further, \((0,1)\ni t\mapsto F_\nu^{-1} := \inf\{x\mid F_\nu(x) \geq t\}\) is the (left-continuous) generalized inverse distribution function associated with \(\nu\), and \(U\) is a random variable that is uniform on \((0,1)\).
Now, assume that \(U\) is independent of \(X\). Then, since the convex cost function \(c\) in \eqref{def:convexcost} is submodular,
the conditionally (on \(X=x\)) comonotone coupling \((F_{Y|X=x}^{-1}(U),F_Y^{-1}(U))\) realizes the cost \(\cW_c(P^{Y|X=x},P^Y)\) in the numerator of the Wasserstein correlation in \eqref{def:WassersteinCorrelation}. This yields the following representation in \eqref{eq:solWcor}.

\subsection{A new reflection operator for SI copulas}

As shown in Proposition \ref{prop:repdW}, optimal couplings for \(\dW_c\) are described by upper products with \(\Pi\). In the following, we show that the rearranged copulas in \eqref{def:inc_rearranged_copula}, and hence rearranged dependence measures, can likewise be represented via upper products of the form \(C\vee \Pi\).
To this end, we introduce a new reflection operator \(\cS\) on the class \(\cC^\uparrow\) of SI copulas as an auxiliary tool.
Later, in Section \ref{sec3}, we prove that the upper product transform \(\cT\) in \eqref{def:operatorT}, which is defined on the whole class of bivariate copulas, generalizes \(\cS\) since, as we show, it coincides with \(\cS\) on \(\cC^\uparrow\).
While the behavior of $\cT$ is not immediately obvious---neither in general nor specifically within the class $\cC^\uparrow$---the operator $\cS$ admits a straightforward interpretation as a reflection.
We denote by
\(f^{-1}\) the generalized inverse of a right-continuous, \emph{decreasing} function \(f\colon [0,1]\to [0,1]\) defined by
\begin{align}\label{defgeninv}
    f^{-1}(w) := \inf \{ t\in [0,1] \mid f(t) \leq w\}
\end{align}
with the convention that \(\inf\emptyset := 1\).
Recall that any SI copula \(C\) is concave in its second component and thus, for all \(v\in [0,1]\), \(t\mapsto \partial_2 C(v,t)\) is decreasing outside a \(\lambda\)-null set. In this case, we consider for convenience the right-continuous version \(t\mapsto \partial_2^+ C(v,t) := \lim_{s\downarrow t} \partial_2 C(v,s)\) of the partial derivative, which corresponds to the lower Dini derivative.

\begin{definition}[Reflection operator \(\cS\)]\label{def_reflectS}
    For \(C\in \cC^\uparrow\), define
\begin{align}\label{def:reflectop}
    \cS(C)(v,u) := \int_0^u (\partial_2^+ C(v,\cdot))^{-1}(t) \de t \quad \text{for } (v,u) \in [0,1]^2.
\end{align}
\end{definition}

\begin{remark}
\begin{enumerate}[label=(\alph*)]\label{remreflopte}
    \item \label{remrefloptea}
    It is important to note that we define the copula operator \(\cS\) by inverting the copula derivative \(\partial_2^+C(v,t)\) with respect to the second argument \(t\). For \((Y,X)\sim C\), this corresponds to an inverse of the conditional distribution function \(P(Y\leq y\mid X=t) = F_{Y|X=t}(v) = \partial_2^+ C(v,t)\) with respect to the \emph{conditioning} variable.
    As we illustrate in Figure \ref{fig:S_Gauss}, the operator \(\cS\) reflects the copula derivative \(t\mapsto \partial_2^+ C(v,t)\) along the main diagonal. It is not difficult to see that the reflected derivatives \(h_v := (\partial_2^+ C(v,\cdot))^{-1}\) define a family \((h_v)_{v\in [0,1]}\) of decreasing functions that satisfy conditions \ref{charSIcop1}--\ref{charSIcop3} of Lemma \ref{charSIcop}. Hence, \(\cS(C)\) is an SI copula.
    Formally, this is also a consequence of Proposition \ref{thm:main_trafo} and Lemma \ref{lem:propuppprod}\,\ref{lem:propuppprod3}.
    \item Classical reflection operators mirror the mass of copulas at the horizontal and vertical line \(u=1/2\) and \(v=1/2\), respectively, or at the diagonal \(u=v\). Such reflections lead to the concepts of survival copula and transposed copula; see \cite{fdsempi2016,Nelsen-2006}. As we show in this paper, our new reflection operator \(\cS\) in \eqref{def:reflectop} is well-motivated via dependence measures. In contrast to the classical reflections, its behavior is not intuitive; see Figure \ref{fig:S_Gauss} or the middle and left column of Figure \ref{fig3}. Note that, by Corollary \ref{cor_TeqS}, \(\cT(C) = \cS(C)\) for all SI copulas \(C\).
    \item \label{remreflopteb} In contrast to the construction in \eqref{def:reflectop}, it is well-known that taking the inverse with respect to the first component of \(\partial_2^+C(v,t)\) does not yield a copula.
    For example, consider the EFGM copula \(C(v,u) = vu + vu(1-v)(1-u)\); see \cite[Section 6.3]{fdsempi2016}. Then, for \((Y,X)\sim C\), the function \(v\mapsto F_t(v) = \partial_2 C(v,t) = v + v (1-v) (1-2t)\) is the conditional distribution function of \(Y\) given \(X=t\). The associated quantile function is \(F_{Y|X=t}^{-1}(v) = F_t^{-1}(v) = \left(1-t - \sqrt{(1-t)^2 -v(1-2t)}\right)/(1-2t)\) for \(t\ne 1/2\). For \(v = 0.25\), one obtains \(\int_0^1 F_t^{-1}(v) \approx 0.28\). Hence, \((u,v) \mapsto \int_0^u (\partial_2^+ C(\cdot,t))^{-1}(v) \de t\) does not define a copula as a consequence of Lemma \ref{charSIcop}.
    We conclude that, in general, there is no copula \(C^*\) such that, for \((Y^*,X^*)\sim C^*\), the identity \(F_{Y^*|X^*=x}^{-1}(v) = F_{Y|X=x}(v)\) is satisfied for Lebesgue-almost all \((v,x)\in [0,1]^2\).
\end{enumerate}
\end{remark}

The following proposition is of central importance for the main results presented in Section \ref{sec3}. It establishes a correspondence between upper products and rearranged copulas through the \emph{reflection} operator \(\cS\).
In particular, it shows that \(\cS(C)\) is an SI copula.

\begin{proposition}[Rearranged copulas and upper products]\label{thm:main_trafo}
    For \(C\in \cC\), the reflection operator \(\cS\) in \eqref{def:reflectop} satisfies
    \begin{enumerate}[label=(\roman*)]
        \item \label{thm:main_trafo1} \(\cS(C\vee \Pi) = C^\uparrow\),
        \item \label{thm:main_trafo2} \(\cS(C^\uparrow) = C\vee \Pi\).
    \end{enumerate}
\end{proposition}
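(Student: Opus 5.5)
Both statements are identities between derivatives in the second component. Since two copulas agree as soon as their derivatives $\partial_2$ agree $\lambda$-almost everywhere, it suffices to compare $\partial_2$ of both sides for almost all $v$. Throughout, fix $v\in(0,1)$ and write $f_v:=\partial_2 C(v,\cdot)$. Let $f_v^*$ be its decreasing rearrangement, chosen right-continuous, so that $\partial_2 C^\uparrow(v,t)=f_v^*(t)$ for almost all $t$.

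The key input is formula \eqref{eqlem:propuppprod} from the proof of Lemma \ref{lem:propuppprod}. For almost all $v$ it gives
\[
g_v(u):=\partial_2(C\vee\Pi)(v,u)=\lambda(\{t\mid f_v(t)>u\}).
\]
By the definition \eqref{def:intrearr} of the rearrangement, $\lambda(f_v>u)=\lambda(f_v^*>u)$. Since $f_v^*$ is decreasing and right-continuous, the set $\{t\mid f_v^*(t)>u\}$ is an interval $[0,a)$ with $a=\inf\{t\mid f_v^*(t)\le u\}=(f_v^*)^{-1}(u)$, using the generalized inverse \eqref{defgeninv}. Hence
\[
g_v(u)=(f_v^*)^{-1}(u)\quad\text{for all }u,
\]
and this function is itself decreasing and right-continuous in $u$. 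In particular it coincides with $\partial_2^+(C\vee\Pi)(v,\cdot)$ everywhere, because two right-continuous functions that agree almost everywhere agree everywhere.

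For (ii), note that $C^\uparrow$ is SI and $\partial_2^+C^\uparrow(v,\cdot)=f_v^*$ (after modification on null sets). Definition \ref{def_reflectS} then yields
\[
\partial_2\cS(C^\uparrow)(v,u)=(f_v^*)^{-1}(u)=g_v(u)=\partial_2(C\vee\Pi)(v,u)
\]
for almost all $u$. Integrating over $u$ from $0$ gives $\cS(C^\uparrow)=C\vee\Pi$ for almost all $v$. Both sides are continuous in $v$ (copulas are Lipschitz), so the identity extends to all $v$.

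For (i), $C\vee\Pi$ is SI by Lemma \ref{lem:propuppprod}\,\ref{lem:propuppprod3}, and $\partial_2^+(C\vee\Pi)(v,\cdot)=(f_v^*)^{-1}$. Thus
\[
\partial_2\cS(C\vee\Pi)(v,\cdot)=\bigl((f_v^*)^{-1}\bigr)^{-1}.
\]
The remaining task is the involution property: for a decreasing right-continuous $h\colon[0,1]\to[0,1]$ with the convention \eqref{defgeninv}, $(h^{-1})^{-1}=h$ at least almost everywhere. I would prove it via the equivalence $h^{-1}(w)\le t\iff h(t)\le w$, valid for $t<1$ by right-continuity. From this, $\{w\mid h^{-1}(w)\le t\}=[h(t),1]$, so $(h^{-1})^{-1}(t)=\inf\{w\mid h^{-1}(w)\le t\}=h(t)$ for $t<1$, with the endpoint $t=1$ irrelevant. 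Applying this with $h=f_v^*$ gives $\partial_2\cS(C\vee\Pi)(v,\cdot)=f_v^*=\partial_2C^\uparrow(v,\cdot)$ almost everywhere, and integration yields (i).

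I expect the main obstacle to be the null-set and boundary bookkeeping rather than any single computation. Three points need care: that \eqref{eqlem:propuppprod} holds for almost all $v$ rather than all $v$; that the right-continuous modification used in $\cS$ matches the chosen version of $f_v^*$; and the edge cases $w=0$ or $1$ together with the convention $\inf\emptyset=1$ in the involution step. Each affects only a null set, and continuity of copulas in $v$ removes the exceptional $v$.
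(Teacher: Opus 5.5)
Your proof is correct and follows the paper's argument. You compute $\partial_2(C\vee\Pi)(v,u)=\lambda(\{t\mid \partial_2 C(v,t)>u\})=(f_v^*)^{-1}(u)$ via \eqref{eqlem:propuppprod} and equimeasurability, which gives (ii) directly and (i) after inverting once more. The only difference is that you explicitly verify the involution $(h^{-1})^{-1}=h$ for right-continuous decreasing $h$, which the paper uses tacitly in the step ``It follows that $h_v(t)=(\partial_2(C\vee\Pi)(v,\cdot))^{-1}(t)$''.
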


\begin{proof}    \ref{thm:main_trafo1}: Fix \(v\in [0,1]\) and set \(h_v := \partial_2^+ C^\uparrow(v,\cdot)\). It follows that
    \begin{align*}
        \partial_2 (C\vee \Pi)(v,u)         = \lambda(\{t \mid \partial_2 C(v,t) > u\})
        = \lambda(\{t \mid  h_v(t) > u\}) = \lambda(\{t \mid  t < h_v^{-1}(u)\}) = h_v^{-1}(u),
    \end{align*}
    where the first equality is due to \eqref{eqlem:propuppprod}. For the second equality, we use that \(C^\uparrow\) is the increasing rearranged copula of \(C\) and thus \(h_v\) a rearrangement of \(\partial_2 C(v,\cdot)\).
    For the third equality, we use the fact that, for a decreasing and right-continuous function \(h\colon [0,1]\to [0,1]\), we have \(h(t) > u\) if and only if \(t < h^{-1}(u)\).     It follows that
    \begin{align*}
        h_v(t) = \left(\partial_2 (C\vee \Pi)(v,\cdot)\right)^{-1}(t)
    \end{align*}
    for \(\lambda\)-almost all \(t\in [0,1]\)
    and thus
    \begin{align*}
        \cS(C\vee \Pi)(v,u) = \int_0^u h_v(t) \de t = C^\uparrow(v,u).
    \end{align*}
    To prove \ref{thm:main_trafo2}, define \(g_v(t):= \partial_2^+ (C\vee \Pi)(v,t)\) for \(t\in (0,1)\). Similarly as above, we obtain \(g_v = (\partial_2^+ C^\uparrow (v,\cdot))^{-1}\). This implies
    \(\cS(C^\uparrow)(v,u) = \int_0^u g_v(t) \de t = C\vee \Pi(v,u)\).
\end{proof}

\begin{figure}[tbp]
    \centering
    \begin{subfigure}[b]{0.45\textwidth}
        \centering
        \includegraphics[width=\textwidth, trim=80pt 20pt 70pt 10pt, clip]{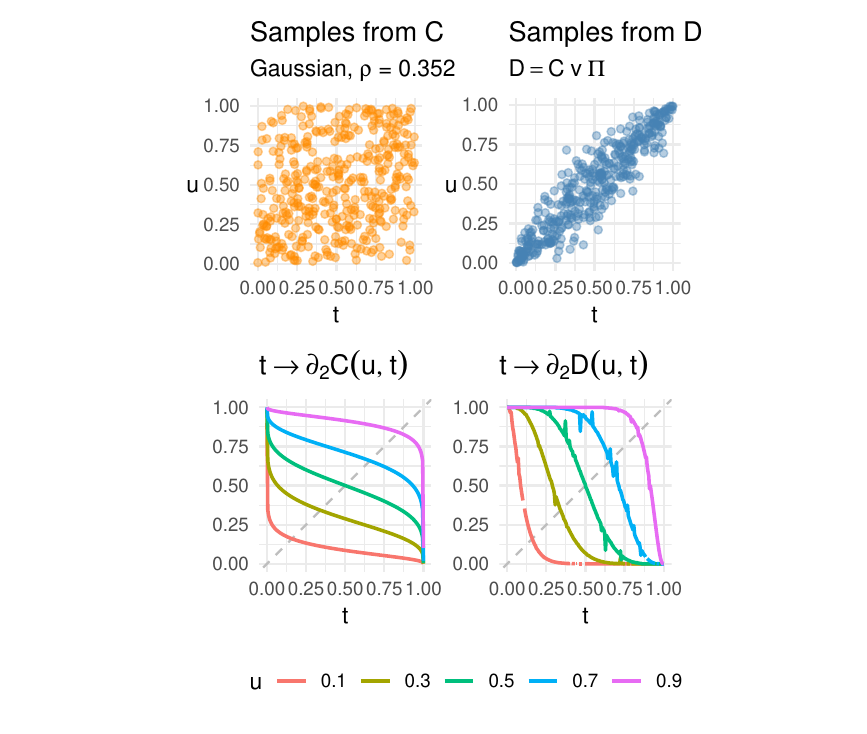}
        \caption{Sample (top) and partial derivative (bottom) of \(C_{0.352}^{\text{Gauss}}\) (left) and \(\cS(C_{0.352}^{\text{Gauss}}) = C_{0.936}^{\text{Gauss}}\) (right)}
        \label{fig:plot1}
    \end{subfigure}
    \hfill     \begin{subfigure}[b]{0.45\textwidth}
        \centering
        \includegraphics[width=\textwidth, trim=80pt 20pt 70pt 10pt, clip]{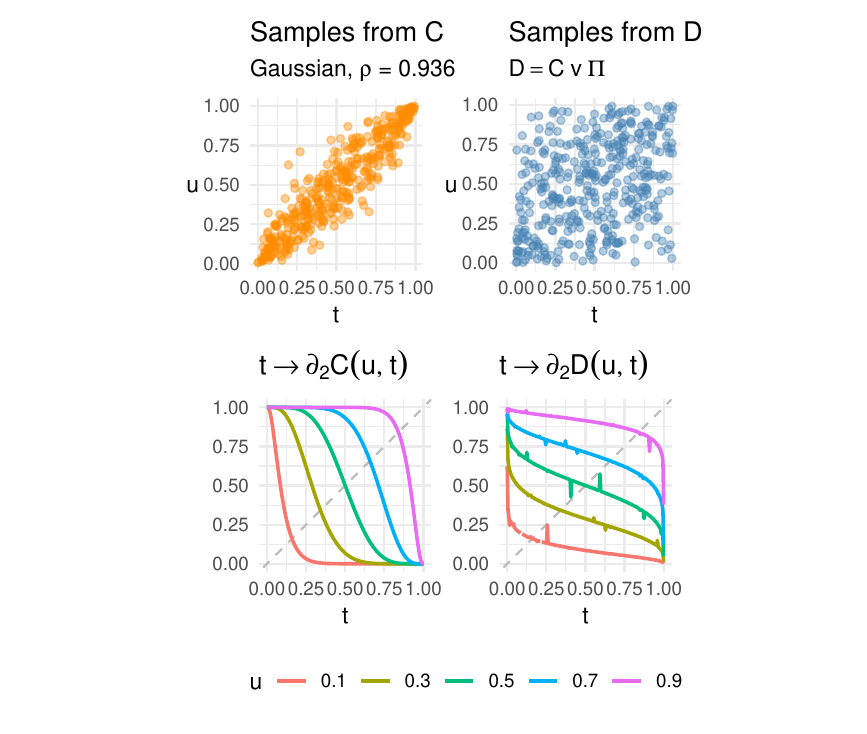}
        \caption{Sample (top) and partial derivative (bottom) of \(C_{0.936}^{\text{Gauss}}\) (left) and \(\cS(C_{0.936}^{\text{Gauss}}) = C_{0.352}^{\text{Gauss}}\) (right)}
        \label{fig:plot2}
    \end{subfigure}
    \caption{Visualization of the reflection operator \(\cS\colon \cC^\uparrow\to \cC^\uparrow\) in \eqref{def:reflectop} for the Gaussian copula \(C_{\varrho}^{\text{Gauss}}\) with parameter \(\varrho = 0.352\) in Plot (a) and \(\varrho = 0.936\) in Plot (b). Note that \(\cS(C_{\varrho}^{\text{Gauss}}) = C_{{\sqrt{1-\varrho^2}}}^{\text{Gauss}}\); see \eqref{eq:S_Gauss}.}
    \label{fig:S_Gauss}
\end{figure}

Proposition \ref{prop:repdW} and Proposition \ref{thm:main_trafo} yield the following representation of Wasserstein correlations in terms of rearranged copulas.

\begin{corollary}[Representation of \(\dW_c\) via rearranged copulas]\label{thm:RepWasserstein}~\\
    For the Wasserstein correlation in \eqref{def:WassersteinCorrelation}, we have
    \begin{align}\label{eqthm:RepWasserstein}
        \dW_c(Y,X)
        = \frac{\int_{[0,1]^2} c(u,u') \,\de \cS(C_{Y,X}^\uparrow)(u,u')}{\int_0^1 \int_0^1 c(u,u') \de u' \de u},
    \end{align}
    i.e., \(\dW_c(Y,X)\) is a functional of the reflected rearranged copula of \(C_{Y,X}\).
\end{corollary}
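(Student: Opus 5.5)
The corollary follows by combining Proposition \ref{prop:repdW} with Proposition \ref{thm:main_trafo}\,\ref{thm:main_trafo2}. Let \(C := C_{Y,X}\) and take \(U,V\sim\cU(0,1)\) with \(X\), \(V\) independent of \(U\), as in Proposition \ref{prop:repdW}. Part \ref{prop:repdW1} of that proposition gives
\begin{align*}
\dW_c(Y,X) = \frac{\E c(F_{Y|X}^{-1}(U),U)}{\E c(U,V)}.
\end{align*}
The proof then rewrites the numerator and the denominator in turn.

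For the numerator, Proposition \ref{prop:repdW}\,\ref{prop:repdW2} says that the distribution function of the pair \((F_{Y|X}^{-1}(U),U)\) is \(\cT(C) = C\vee\Pi\). Hence the numerator equals \(\int_{[0,1]^2} c(u,u')\,\de(C\vee\Pi)(u,u')\), which is a Lebesgue--Stieltjes integral against the copula \(C\vee\Pi\). This copula is SI by Lemma \ref{lem:propuppprod}\,\ref{lem:propuppprod3}. By Proposition \ref{thm:main_trafo}\,\ref{thm:main_trafo2}, \(C\vee\Pi = \cS(C^\uparrow)\), where \(\cS(C^\uparrow)\) is well defined because \(C^\uparrow\in\cC^\uparrow\). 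Substituting gives the integral against \(\de\,\cS(C_{Y,X}^\uparrow)\).

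For the denominator, \(U\) and \(V\) are independent standard uniforms, so \(\E c(U,V) = \int_0^1\int_0^1 c(u,u')\de u'\de u\). It is positive and finite: \(c(y,y') = h(y'-y)\) is continuous on \([0,1]^2\) because \(h\) is convex, and \(c\ge 0\) with \(c\not\equiv 0\) because \(h(0)=0\) and \(h\) is strictly convex at \(0\). Combining the two rewritten pieces yields \eqref{eqthm:RepWasserstein}.

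The only point needing care is the identification of the law of \((F_{Y|X}^{-1}(U),U)\), together with the fact that \(\partial_2\)-derivatives are determined only up to null sets. Proposition \ref{prop:repdW} is taken as given, so the first issue is settled. The null-set discrepancies do not change the copula as a distribution function, and hence do not affect the integral. Within the corollary itself the main potential obstacle is therefore only the bookkeeping of which copula appears in the integrand, and Proposition \ref{thm:main_trafo}\,\ref{thm:main_trafo2} resolves this directly.
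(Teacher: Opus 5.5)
Your proposal is correct and is the paper's argument: the corollary follows by combining Proposition~\ref{prop:repdW}, which identifies the law of \((F_{Y|X}^{-1}(U),U)\) as \(C_{Y,X}\vee\Pi\), with Proposition~\ref{thm:main_trafo}\,\ref{thm:main_trafo2}, which gives \(C_{Y,X}\vee\Pi=\cS(C_{Y,X}^\uparrow)\). One small slip is in your side remark on the denominator: \(c\ge 0\) does not follow from the assumptions on \(h\) (for example \(h(x)=x+x^2\) gives \(h(-1/2)<0\)); positivity of \(\E c(U,V)=\E h(V-U)\), which the definition of \(\dW_c\) presupposes anyway, instead follows from Jensen's inequality and strict convexity of \(h\) at \(0\), since \(V-U\) is non-degenerate with mean \(0\).
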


Another direct consequence of Proposition \ref{thm:main_trafo} is the following representation of rearranged dependence measures in terms of upper products.

\begin{corollary}[Representation of \(\sR_\mu\) via upper products]\label{cor:sr}~\\
The rearranged dependence measure in \eqref{eq:RearrDepMeasure} satisfies
\begin{align}\label{eq:rep_rearrdepmeasure}
    \sR_\mu(Y,X) = \mu(\cS(C_{Y,X}\vee \Pi)),
\end{align}
i.e., \(\sR_\mu\) is a functional of the reflected upper product of \(C_{Y,X}\) with \(\Pi\).
\end{corollary}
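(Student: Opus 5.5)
The statement is Corollary \ref{cor:sr}: \(\sR_\mu(Y,X) = \mu(\cS(C_{Y,X}\vee \Pi))\). The plan is to obtain it by substituting the identity of Proposition \ref{thm:main_trafo}\,\ref{thm:main_trafo1} into the definition \eqref{eq:RearrDepMeasure} of the rearranged dependence measure.

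First, I set \(C := C_{Y,X}\in\cC\). By definition \eqref{eq:RearrDepMeasure}, \(\sR_\mu(Y,X) = \mu(C^\uparrow)\), and this is well defined because \(C^\uparrow\in\cC^\uparrow\) (see \cite[Proposition 3.17]{Ansari-2021}).

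Second, I check that the right-hand side of the claim makes sense. The operator \(\cS\) is defined only on \(\cC^\uparrow\), so \(C\vee\Pi\) must be SI. This is exactly Lemma \ref{lem:propuppprod}\,\ref{lem:propuppprod3}. Hence \(\cS(C\vee\Pi)\) is defined.

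Third, Proposition \ref{thm:main_trafo}\,\ref{thm:main_trafo1} gives \(\cS(C\vee\Pi) = C^\uparrow\) as an identity of copulas, that is, pointwise on \([0,1]^2\). Applying \(\mu\) to both sides yields \(\mu(\cS(C\vee\Pi)) = \mu(C^\uparrow) = \sR_\mu(Y,X)\), which is the claim.

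There is essentially no obstacle. The only point requiring care is that \(\mu\) is evaluated on a genuine SI copula, which is guaranteed by Lemma \ref{lem:propuppprod}\,\ref{lem:propuppprod3}. The partial derivatives in the proof of Proposition \ref{thm:main_trafo} agree only \(\lambda\)-almost everywhere, but they enter only through integrals \(\int_0^u\). The copulas \(\cS(C\vee\Pi)\) and \(C^\uparrow\) are therefore equal everywhere, so \(\mu\) takes the same value on both.
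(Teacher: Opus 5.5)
Your proposal is correct and follows the paper's route exactly: the paper obtains the corollary as a direct consequence of Proposition~\ref{thm:main_trafo}\,\ref{thm:main_trafo1}, substituting \(\cS(C_{Y,X}\vee\Pi)=C_{Y,X}^\uparrow\) into the definition \eqref{eq:RearrDepMeasure}. Your extra check via Lemma~\ref{lem:propuppprod}\,\ref{lem:propuppprod3} that \(C_{Y,X}\vee\Pi\) is SI, so that \(\cS\) can be applied, is a detail the paper leaves implicit.
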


\section{Metric properties of the upper product transform \(\cT\)}\label{sec3}

In this section, we show that \(\cT\) coincides with \(\cS\) on the class of SI copulas, while \(\cT^2\) acts as a projection of a copula onto its increasing rearranged copula.

Therefore, recall Proposition \ref{thm:main_trafo}, according to which the reflection operator \(\cS\) permutes the upper product \(C\vee \Pi\) and the increasing rearranged copula of \(C\).
With the definition of \(\cT\) in \eqref{def:operatorT}, this can be written as
\begin{align}\label{eq:propS}
    \cS\circ \cT(C) = C^\uparrow \qquad\text{and} \qquad \cT(C) = \cS(C^\uparrow).
\end{align}
In the sequel, we are also interested in metric properties of \(\cT\). Therefore, we equip the class of bivariate copulas with the \(L^p\)-norm for copula derivatives (taken with respect to the second component). To be precise, we define
\begin{align}\label{defmetr}
    d_p(C,D) := \left( \int_0^1 \int_0^1 \lvert \partial_2 C(u,v) - \partial_2 D(u,v) \rvert^p \de u \de v\right)^{1/p} \quad \text{for } C,D\in \cC \text{ and } p\geq 1.
\end{align}
It is well-known that \(d_p\) defines a metric on \(\cC\); see \cite{Trutschnig-2011}.
For \(p=1\), we obtain isometry properties for \(\cS\) and \(\cT\) based on the following simple lemma.

\begin{lemma}\label{lemdel_2_norm}
    For right-continuous, decreasing functions \(f,g \colon [0,1]\to [0,1]\), we have
    \begin{align}\label{eq:lemdel_2_norm}
        \int_0^1 |f(x) - g(x)| \de x = \int_0^1 |f^{-1}(y) - g^{-1}(y) | \de y,
    \end{align}
    with the generalized inverse defined in \eqref{defgeninv}.
\end{lemma}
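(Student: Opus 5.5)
The plan is to interpret both sides of \eqref{eq:lemdel_2_norm} as the Lebesgue measure of one planar set. The region between the graphs of \(f\) and \(g\) can be described either by vertical slices (over \(x\)) or by horizontal slices (over \(y\)), and the generalized inverse converts one description into the other. Concretely, set
\[
A:=\{(x,y)\in[0,1]^2 \mid \min\{f(x),g(x)\} < y \le \max\{f(x),g(x)\}\}.
\]
Its vertical sections at fixed \(x\) are intervals of length \(|f(x)-g(x)|\). Fubini then gives \(\lambda^2(A)=\int_0^1|f(x)-g(x)|\de x\).

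Next I compute the horizontal sections. The key fact is already used in the proof of Proposition \ref{thm:main_trafo}: for a decreasing, right-continuous \(h\colon[0,1]\to[0,1]\) and the inverse in \eqref{defgeninv}, we have \(h(x)>y\) if and only if \(x<h^{-1}(y)\). I will check this equivalence directly.
\begin{itemize}
\item If \(h(x)>y\), right-continuity and monotonicity give \(h(s)>y\) for all \(s\le x+\varepsilon\) with \(\varepsilon>0\) small, so \(h^{-1}(y)>x\).
\item Conversely, if \(x<h^{-1}(y)\), then \(h(x)>y\) by the definition of the infimum.
\end{itemize}
The convention \(\inf\emptyset=1\) must also be checked at \(x=1\). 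This endpoint is a null set, so it is harmless.

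Consequently, \(\{x\mid f(x)\le y<g(x)\}=\{x\mid f^{-1}(y)\le x<g^{-1}(y)\}\). Up to the choice of strict or non-strict inequalities on null sets, the horizontal section of \(A\) at height \(y\) is therefore the symmetric difference \(\{x<f^{-1}(y)\}\,\triangle\,\{x<g^{-1}(y)\}\). This is an interval of length \(|f^{-1}(y)-g^{-1}(y)|\).

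To make the slicing exact rather than approximate, I will instead work with \(A=\{(x,y)\mid \1_{\{f(x)>y\}}\ne \1_{\{g(x)>y\}}\}\). This set is measurable because \(f\) and \(g\) are monotone, hence measurable. Its \(x\)-sections are \([\min\{f(x),g(x)\},\max\{f(x),g(x)\})\), and by the equivalence its \(y\)-sections are \([\min\{f^{-1}(y),g^{-1}(y)\},\max\{f^{-1}(y),g^{-1}(y)\})\). Tonelli's theorem applied to \(\1_A\) in both orders then yields \eqref{eq:lemdel_2_norm}.

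The only delicate point is the equivalence \(h(x)>y \iff x<h^{-1}(y)\) at boundary values, namely \(y\) where \(h\) has flat pieces or jumps, and the endpoints \(0,1\). Any discrepancies there affect only null sets of \(x\) for each fixed \(y\), so the integrals are unaffected.
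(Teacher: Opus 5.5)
Your proof is correct and follows the paper's argument. Both proofs write \(|f(x)-g(x)|\) as the \(y\)-integral of the indicator of the strip between the two graphs, use \(h(x)>y \iff x<h^{-1}(y)\) to identify the horizontal sections as intervals between \(f^{-1}(y)\) and \(g^{-1}(y)\), and swap the order of integration by Fubini--Tonelli. Your extra care about measurability and the exceptional point \(x=1\) (where the equivalence fails if \(h(1)>y\), because of the convention \(\inf\emptyset=1\)) fills in details the paper leaves implicit.
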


\begin{proof}
Since \(f(x) \leq y\) if and only if \(x \geq f^{-1}(y)\), and thus \(f(x) > y\) if and only if \(x < f^{-1}(y)\), we have
    \begin{align*}
        \int_0^1 |f(x) -g(x)|\de x &= \int_0^1 \int_0^1 \1_{\min\{f(x),g(x)\}\leq y < \max\{f(x),g(x)\}} \de y \de x \\
        &= \int_0^1 \int_0^1 \1_{\min\{f^{-1}(y),g^{-1}(y)\}\leq x < \max\{f^{-1}(y),g^{-1}(y)\}} \de x \de y \\
        &= \int_0^1 | f^{-1}(y) - g^{-1}(y)| \de y
    \end{align*}
    using Fubini's theorem.
\end{proof}

To establish several fundamental properties of \(\cT\), we use that the mapping \(\cS\) is an isometric reflection on the metric space \((\cC^\uparrow,d_1)\) as follows.

\begin{proposition}[\(\cS\) is a reflection]\label{the:refS}
    For all \(D,E\in \cC^\uparrow\), we have
\begin{enumerate}[label=(\roman*)]
\item \label{the:refS0} \(\cS(\cS(D)) = D\),
    \item \label{the:refS1} \(d_1(\cS(D) , \cS(E)) = d_1( D , E)\),
\end{enumerate}
\end{proposition}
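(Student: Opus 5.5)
Both parts follow from the fact that, for an SI copula, $\cS$ acts on each conditional distribution $t\mapsto \partial_2^+ D(v,t)$ by taking the generalized inverse in \eqref{defgeninv}. Applying this inverse twice returns the original function, and by Lemma~\ref{lemdel_2_norm} inversion preserves $L^1$-distances.

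\textbf{Proof of (i).} Fix $D\in\cC^\uparrow$ and $v\in[0,1]$, and set $f_v:=\partial_2^+ D(v,\cdot)$. This function is right-continuous, decreasing and $[0,1]$-valued.

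By \eqref{def:reflectop}, $\partial_2\cS(D)(v,u)=f_v^{-1}(u)$ for almost all $u$. The function $f_v^{-1}$ is decreasing, and by Remark~\ref{remreflopte}\ref{remrefloptea} it is right-continuous. Hence $\partial_2^+\cS(D)(v,\cdot)=f_v^{-1}$, possibly after modification at countably many points, which does not matter since only the right-continuous version enters.

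It therefore suffices to show $(f_v^{-1})^{-1}=f_v$ at every continuity point of $f_v$, hence almost everywhere. I would derive this from the equivalence $f_v(t)>w \iff t<f_v^{-1}(w)$, already used in the proof of Proposition~\ref{thm:main_trafo}. Applying the same equivalence to $g=f_v^{-1}$ gives
\[
g(w)>t \iff w<g^{-1}(t),
\]
and combining the two equivalences yields $g^{-1}(t)=\sup\{w: f_v(t)>w\}=f_v(t)$.

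Alternatively, one can avoid the pointwise analysis. Proposition~\ref{thm:main_trafo}, together with \eqref{eq:CCSI} for $D$ and Lemma~\ref{lem:propuppprod}\ref{lem:propuppprod3}, shows that $\cS(D)=\cS(D^\uparrow)=D\vee\Pi$. Then $\cS(\cS(D))=\cS(D\vee\Pi)=D^\uparrow=D$. This is the cleaner route, and I would present it as the main argument.

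Integrating in $u$, either route gives $\cS(\cS(D))(v,u)=\int_0^u f_v=D(v,u)$.

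\textbf{Proof of (ii).} For $D,E\in\cC^\uparrow$, write $f_v=\partial_2^+D(v,\cdot)$ and $g_v=\partial_2^+E(v,\cdot)$; these equal $\partial_2 D$ and $\partial_2 E$ almost everywhere. Then
\[
d_1(\cS(D),\cS(E))=\int_0^1\!\int_0^1 |f_v^{-1}(u)-g_v^{-1}(u)|\de u\de v .
\]
For each fixed $v$, Lemma~\ref{lemdel_2_norm} turns the inner integral into $\int_0^1|f_v(t)-g_v(t)|\de t$. Integrating over $v$ gives $d_1(D,E)$; Fubini's theorem applies because the integrand is bounded and measurable.

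\textbf{Main obstacle.} The only delicate point is the measurability and almost-everywhere identification of $\partial_2\cS(D)(v,\cdot)$ with $f_v^{-1}$, jointly in $(v,u)$, so that $d_1$ is computed with the correct versions. This holds because $f_v^{-1}(u)=\lambda\{t: f_v(t)>u\}$ is jointly measurable, and the versions differ only on null sets that do not affect the integrals.
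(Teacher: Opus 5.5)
Your proposal is correct and matches the paper's proof. For (i), the paper likewise uses $D=D^\uparrow$ and Proposition~\ref{thm:main_trafo} to obtain $\cS(D)=D\vee\Pi$ and then $\cS(D\vee\Pi)=D^\uparrow=D$. For (ii), it applies Lemma~\ref{lemdel_2_norm} to $t\mapsto\partial_2^+D(v,t)$ and $t\mapsto\partial_2^+E(v,t)$ for each fixed $v$ and then integrates over $v$, exactly as you do; your alternative pointwise argument via $(f_v^{-1})^{-1}=f_v$ also works, since the convention $\inf\emptyset:=1$ only causes trouble on a null set.
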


\begin{proof}Statement \ref{the:refS0} follows from Proposition \ref{thm:main_trafo}.
For the proof of Statement \ref{the:refS1}, we have
\begin{align}
    \nonumber d_1(D,E) &= \int_0^1\int_0^1 \left|\partial_2^+ D(u,t) - \partial_2^+ E(u,t)\right| \de t \de u \\
    \label{eq_key}&= \int_0^1\int_0^1 \left| (\partial_2^+ D(u,\cdot))^{-1}(t) - (\partial_2^+ E(u,\cdot))^{-1}(t)\right| \de t \de u \\
    \nonumber &= \int_0^1\int_0^1 \left|\partial_2^+ \cS(D)(u,t) - \partial_2^+ \cS(E)(u,t)\right| \de t \de u = d_1(\cS(D),\cS(E))
\end{align}
for all \(u\in [0,1]\),
where we use Lemma \ref{lemdel_2_norm} for the second equality. The third equality follows from the definition of the reflection operator \(\cS\). \end{proof}

Recall Theorem \ref{prop13} according to which \(\cT^{2k}(D) = D^\uparrow\) and \(\cT^{2k-1}(D) = \cT(D)\) for all \(k\in \{1,2,3,\ldots\}\), where \(\cT^k\) denotes the \(k\)-fold composition of \(\cT\) with itself.
Due to the following result, the mapping \(\cT\) is a generalization of the reflection operator \(\cS\). In particular, it permutes increasing rearranged copulas and upper products with \(\Pi\). The reflection property of \(\cT\) on the class of SI copulas is visualized in Figure \ref{Fig2}.

\begin{corollary}[\(\cT = \cS\) on \(\cC^\uparrow\)]\label{cor_TeqS}
    We have
        \(\cT(D) = \cS(D)\) for all \(D\in \cC^\uparrow\).
\end{corollary}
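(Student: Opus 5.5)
The plan is to derive the corollary directly from the identity \eqref{eq:propS}, i.e. from Proposition \ref{thm:main_trafo}\,\ref{thm:main_trafo2}, together with the characterization \eqref{eq:CCSI} of SI copulas as fixed points of the increasing rearrangement.

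Let \(D\in \cC^\uparrow\). By \eqref{eq:CCSI}, the increasing rearranged copula of \(D\) is \(D\) itself, that is, \(D^\uparrow = D\). Proposition \ref{thm:main_trafo}\,\ref{thm:main_trafo2} holds for an arbitrary copula, so we may apply it with \(C=D\). This gives
\begin{align*}
\cS(D) = \cS(D^\uparrow) = D\vee \Pi = \cT(D),
\end{align*}
where the last equality is the definition \eqref{def:operatorT}. This is exactly the asserted identity.

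The only point needing care is that \(\cS\) in Definition \ref{def_reflectS} is built from the right-continuous version \(\partial_2^+ D(v,\cdot)\). For an SI copula this version exists and is decreasing, so \(\cS(D)\) is well-defined. In the proof of Proposition \ref{thm:main_trafo} the derivative \(\partial_2^+ C^\uparrow(v,\cdot)\) is used in the same way. That proof also uses that \(\partial_2 C^\uparrow(v,\cdot)\) is a decreasing rearrangement of \(\partial_2 C(v,\cdot)\). When \(C=D\) is already SI, \(\partial_2 D(v,\cdot)\) is itself decreasing, hence a.e. equal to its own decreasing rearrangement. So the argument of the proposition applies verbatim. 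I therefore expect no real obstacle: the corollary is a one-line specialization.
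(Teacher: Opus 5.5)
Your proof is correct and is the same as the paper's: you use $D=D^\uparrow$ from \eqref{eq:CCSI} and then apply Proposition~\ref{thm:main_trafo}\,\ref{thm:main_trafo2} to get $\cS(D)=\cS(D^\uparrow)=D\vee\Pi=\cT(D)$. Your extra remark on the right-continuous derivative does no harm, but it is not needed: $D^\uparrow=D$ holds as an identity of copulas, so the proposition applies directly.
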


\begin{proof}
Let \(D\in\cC^\uparrow\). Since \(D=D^\uparrow\), Proposition
\ref{thm:main_trafo}\,\ref{thm:main_trafo2} yields \(\cS(D)
    =
    \cS(D^\uparrow)
    =
    D\vee\Pi
    =
    \cT(D).
\)
\end{proof}

The following result establishes metric properties of the upper product transform \(\cT\). It shows that \(\cT^2\) acts as a contraction with respect to the metric \(d_p\). Further, for \(p=1\), the mapping \(\cT\) is an isometry on \(\cC^\uparrow\).

\begin{theorem}[Metric properties of \(\cT\)]\label{the_metric}
For all \(D,E\in \cC\), we have
    \begin{enumerate}[label=(\roman*)]
    \item \label{the:Tiso4c1} \(d_p(\cT^2(D), \cT^2(E)) \leq d_p( D,E)\) for all \(p\geq 1\),
        \item \label{the:Tiso4c}\label{the:Tiso1} \(d_1(\cT^k(D), \cT^k(E)) = d_1( \cT^2(D), \cT^2(E)) = d_1( D^\uparrow, E^\uparrow)\) for \(k\in \N\).
    \end{enumerate}
\end{theorem}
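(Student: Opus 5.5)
First I will establish the identity $\cT^2(D)=D^\uparrow$ (Theorem \ref{prop13}) from the results already available, since both parts rest on it. By \eqref{eq:propS}, $\cT(D)=\cS(D^\uparrow)$, and by Lemma \ref{lem:propuppprod}\,\ref{lem:propuppprod3}, $\cT(D)\in\cC^\uparrow$. Corollary \ref{cor_TeqS} then gives $\cT^2(D)=\cT(\cT(D))=\cS(\cT(D))=\cS(\cS(D^\uparrow))=D^\uparrow$, where the last step is Proposition \ref{the:refS}\,\ref{the:refS0}. For the same reason, $\cT^3(D)=\cT(D^\uparrow)=\cS(D^\uparrow)=\cT(D)$. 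By induction, $\cT^{2k}(D)=D^\uparrow$ and $\cT^{2k-1}(D)=\cT(D)$ for all $k\ge1$.

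For \ref{the:Tiso4c1}, it therefore suffices to show $d_p(D^\uparrow,E^\uparrow)\le d_p(D,E)$. By the definition \eqref{def:inc_rearranged_copula}, $\partial_2 D^\uparrow(u,\cdot)=h_u^*$ and $\partial_2 E^\uparrow(u,\cdot)=g_u^*$ almost everywhere, where $h_u=\partial_2D(u,\cdot)$ and $g_u=\partial_2E(u,\cdot)$. The claim reduces, for each fixed $u$, to the classical nonexpansiveness of decreasing rearrangement in $L^p$:
\[\int_0^1|h^*-g^*|^p\,\de t\le\int_0^1|h-g|^p\,\de t .\]
Integrating this inequality over $u$ gives the result. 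I would quote the inequality as the Crandall--Tartar / Lorentz rearrangement inequality. If a self-contained argument is wanted, one can use that $\phi(x-y)$ with $\phi$ convex is submodular, so that by the rearrangement (Hardy--Littlewood type) inequality for submodular integrands the comonotone coupling $(h^*,g^*)$ minimizes $\int\phi(h-g)$ over all couplings with the same marginals. This is the same optimal transport fact \eqref{def:OT} already cited, applied with the uniform variable $t$. This step is the main obstacle, but only in the sense of choosing the cleanest citation.

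For \ref{the:Tiso4c}, the second equality is just $\cT^2=\cdot^\uparrow$. For $k$ even, $\cT^k(D)=D^\uparrow$ by the first paragraph, so there is nothing to prove. For $k$ odd, $\cT^k(D)=\cT(D)=\cS(D^\uparrow)$, and likewise for $E$. Since $D^\uparrow,E^\uparrow\in\cC^\uparrow$, Proposition \ref{the:refS}\,\ref{the:refS1} gives $d_1(\cS(D^\uparrow),\cS(E^\uparrow))=d_1(D^\uparrow,E^\uparrow)$, which finishes the proof. The only subtlety is that $d_1$ uses $\partial_2$ while $\cS$ is built from $\partial_2^+$. These coincide almost everywhere, which is harmless inside the integrals, as already noted in the remark after Lemma \ref{charSIcop}.
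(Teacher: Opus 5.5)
Your proposal is correct and follows essentially the paper's proof. For (i), the paper shows $V_v-W_v$ dominates the comonotone pair $V_v^*-W_v^*$ in convex order, which is the same comonotone-coupling/rearrangement fact you invoke, and then integrates over $v$. For (ii), it uses the $d_1$-isometry of $\cS$ on $\cC^\uparrow$ together with $\cT=\cS$ on $\cC^\uparrow$ and $\cT^2(C)=C^\uparrow$. The differences are cosmetic: the paper applies the isometry to $\cT(D),\cT(E)$ rather than to $D^\uparrow,E^\uparrow$, and it takes Theorem \ref{prop13} as given instead of re-deriving it.
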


\begin{proof}
\ref{the:Tiso4c1}: Define for \(v\in [0,1]\) and \(U\sim \cU(0,1)\) the random variables \(V_v := \partial_2 D(v,U)\) and \(W_v := \partial_2 E(v,U)\). Further, define \(V_v^*:= \partial_2 D^\uparrow(v,U)\) and \(W_v^*:=\partial_2 E^\uparrow(v,U)\). By definition of the increasing rearranged copulas in \eqref{def:inc_rearranged_copula}, we have \(V_v\eqd V_v^*\), \(W_v\eqd W_v^*\), and that \(V_v^*\) and \(W_v^*\) are comonotone. For bounded random variables \(S\) and \(T\), we denote by \(S \leq_{cx} T\) the convex order defined  by \(\E \varphi(S) \leq \E \varphi(T)\) for all convex functions \(\varphi\colon \R\to \R\). Then we obtain
\begin{align*}
    V_v-W_v\geq_{cx} V_v^*-W_v^*;
\end{align*}
see e.g. \cite[Corollary 3.28\,(b)]{Ru-2013}. It follows that
\begin{align*}
    d_p^p(D,E) &= \int_0^1 \int_0^1 |\partial_2 D(v,t) - \partial_2 E(v,t) |^p \de t \de u  = \int_0^1 \E|V_v-W_v|^p \de v \\
    &\geq \int_0^1 \E|V_v^*-W_v^*|^p\de v
    = \int_0^1 \int_0^1 |\partial_2 D^\uparrow(v,t) - \partial_2 E^\uparrow(v,t) |^p \de t \de u = d_p^p(\cT^2(D),\cT^2(E)).
\end{align*}
\ref{the:Tiso4c}: Applying Proposition \ref{the:refS}\,\ref{the:refS1}, we obtain
\begin{align}\label{pro:the:Tiso4c}
    d_1(\cT(D), \cT(E)) = d_1(\cS(\cT(D)), \cS(\cT(E))) = d_1( \cT^2(D), \cT^2(E)) = d_1( D^\uparrow, E^\uparrow)
\end{align}
 where the second equality is due to Corollary \ref{cor_TeqS}. The last equality holds true by Theorem \ref{prop13}.
\end{proof}

\begin{remark}\label{rem:T}
    \begin{enumerate}[label=(\alph*)]
    \item Key for the isometry properties of \(\cS\) in Proposition \ref{the:refS}\,\ref{the:refS1} and \(\cT\) in Theorem \ref{the_metric}\,\ref{the:Tiso4c} is the integral identity for right-continuous, decreasing functions in \eqref{eq:lemdel_2_norm}. Similarly, for integrable distribution functions \(F,G\), it is well-known that
        \begin{align}\label{eq_form_gen_inv}
    \int_0^1 \left| F^{-1}(t) - G^{-1}(t) \right| \de t = \int_\R |F(x) - G(x) | \de x,
\end{align}
see e.g. \cite[Remark 2.19\,(iii)]{Villani-2003}. Note that the left-hand side of \eqref{eq_form_gen_inv} equals the \(1\)-Wasserstein distance in \eqref{def:OT} (with cost function \(c(x,y) = |y-x|\)) between \(\nu\) and \(\nu'\) with \(F_\nu = F\) and \(F_{\nu'}=G\).
    \item A central idea of our paper is the construction of the reflection operator \(\cS\) in \eqref{def:reflectop} by reflecting the copula derivative \(t\mapsto\partial_2^+ C(u,t)\) along the main diagonal. For \((Y,X)\sim C\), this corresponds to a reflection of the conditional distribution function \(t\mapsto F_{Y|X=t}(u)\) as a function in the \emph{conditioning variable} \(X\).
While the literature typically focuses on integral representations of the form \eqref{eq_form_gen_inv} to relate distances between quantile and distribution functions, our approach in \eqref{eq_key} is different.
Instead, we leverage the integral identity \eqref{eq:lemdel_2_norm} with respect to the conditioning variable, specifically exploiting the fact that for SI copulas \(C\), the partial derivatives \(t\mapsto \partial_2 C(u,t)\) are decreasing.
        \item \label{rem:T1} Due to Proposition~1.3 and Theorem~3.4(i), \(\mathcal T^2\) is an
idempotent nonexpansive retraction from \((\mathcal C,d_p)\) onto
\((\mathcal C^\uparrow,d_p)\) for every \(p\ge1\). Equivalently,
\(\mathcal T^2\) is a nonexpansive projection onto
\(\mathcal C^\uparrow\).
        \item
        By Theorem \ref{the_metric}\,\ref{eq:CCSI} and \eqref{def:intrearr}, \(\cT\) restricted to \((\cC^\uparrow,d_1)\) is an isometry, that is,
        \begin{align}
            d_1( \cT(D),\cT(E)) = d_1(D, E) \qquad \text{for all } D,E\in \cC^\uparrow.
        \end{align}
        Hence, \(\cT\) also shares the isometry property of \(\cS\) in Proposition \ref{the:refS}\,\ref{the:refS1}.
\item Due to the identities \(\cT(\Pi) = M\) and \(\cT(M) = \Pi\) (see Lemma \ref{lem:propuppprod}), fixed points of \(\cT\) are non-trivial. A more detailed study of fixed points is provided in Section \ref{sec4}.
    \end{enumerate}
\end{remark}

As a specific case of the following example, the upper product transform of Gaussian copulas is again Gaussian.

\begin{example}[Upper product of Gaussian copulas]\label{ex:uppprodGauss}
    The Gaussian copula with parameter \(\varrho\in [-1,1]\) is defined by
    \begin{align}\label{def:Gauss_Cop}
        C_\varrho^{\text{Gauss}}(v,u)
=
\Phi_\varrho\bigl(\Phi^{-1}(v), \Phi^{-1}(u)\bigr),
\qquad (v,u) \in [0,1]^2,
    \end{align}
where \(\Phi_\varrho\) is the distribution function of the zero-mean bivariate normal distribution with covariance matrix \(\Sigma = \left(\begin{smallmatrix}
    1 & \varrho \\ \varrho & 1
\end{smallmatrix}\right)\), and \(\Phi^{-1}\) denotes the standard normal quantile function. As a consequence of \cite[Theorem 2]{Ansari-2018b}, the upper product defined in \eqref{def:uppprod} of two Gaussian copulas is again Gaussian:
\begin{align}
    C_\varrho^{\text{Gauss}} \vee C_{\varrho_*}^{\text{Gauss}} = C_{m(\varrho,\varrho_*)}^{\text{Gauss}} \qquad \text{for } m(\varrho,\varrho_*):= \varrho\varrho_* + \sqrt{1-\varrho^2}\sqrt{1-\varrho_*^2}.
\end{align}
Setting \(\varrho_*=0\) implies \(C_{\varrho_*}^{\text{Gauss}} = \Pi\) and thus
\begin{align}
\label{eq:S_Gauss}\cS(C_\varrho^{\text{Gauss}}) &= C_{\sqrt{1-\varrho^2}}^{\text{Gauss}}\qquad \text{for } \varrho\in [0,1],\\
    \label{eq:T_Gauss} \cT(C_\varrho^{\text{Gauss}}) &= C_{\sqrt{1-\varrho^2}}^{\text{Gauss}} \qquad \text{for } \varrho\in [-1,1].
\end{align}
This confirms Proposition \ref{thm:main_trafo} and Theorem \ref{prop13} for Gaussian copulas. Note that the reflection operator \(\cS\) is defined for SI copulas and thus \(\varrho\) in \eqref{eq:S_Gauss} is restricted to \([0,1]\). In contrast, \(\cT\) is defined on the whole class \(\cC\), which allows to transform Gaussian copulas in \eqref{eq:T_Gauss} for all \(\varrho\in [-1,1]\). This example also verifies Corollary \ref{cor_TeqS} according to which \(\cT\) coincides with \(\cS\) on \(\cC^\uparrow\).
\end{example}

The identity in \eqref{eq:T_Gauss} motivates the study of further properties of upper products with \(\Pi\). First, we observe that the Gaussian copula with parameter \(\varrho = 1/\sqrt{2}\) is a fixed point of \(\cT\). This raises the question of whether additional fixed points of \(\cT\) exist. We address this question in Section \ref{sec4}.
Second, recall that the Gaussian copula family \((C_\varrho^{\text{Gauss}})_{\varrho\in [-1,1]}\) is \(\leq_{lo}\)-increasing in \(\varrho\); see e.g. \cite{Ansari-Rockel-2023}. Hence,
\begin{align}\label{eq_Gaussprop}
    |\varrho| \leq |\varrho'| \qquad \text{implies} \qquad \cT(C_\varrho^{\text{Gauss}}) \geq_{lo} \cT(C_{\varrho'}^{\text{Gauss}}).
\end{align}
It is therefore natural to ask under which conditions upper products are comparable with respect to the lower orthant order. We answer this question in Section \ref{sec5}.
Third, \(\cT(C_\varrho^{\text{Gauss}})\) is continuous in \(\varrho\) with respect to uniform convergence of copulas.
This implies by \eqref{cor:T_rep} continuity results for \(\dW_c\) and \(\sR_\mu\) in the Gaussian setting.
In Section \ref{sec6}, we investigate general continuity properties of \(\cT\) which yield continuity results for Wasserstein correlations and rearranged dependence measures.

\section{Fixed points of \(\cT\)}\label{sec4}

In this section, we give necessary and sufficient conditions for \(\cT(C) = C\). Therefore, let us denote by
\begin{align}\label{def:C_T}
    \cC_\cT := \{C\in \cC^\uparrow \mid \cT(C) = C\}
\end{align}
the set of copulas that are fixed points of \(\cT\).
Before we construct in Theorem \ref{lem:SI_cop_construction} elements of \(\cC_\cT\),
we first motivate why fixed points are interesting.

To this end, let us consider rearranged dependence measures generated by affine functionals \(\mu\) of the form \(C\mapsto\int f \de C\) where the function \(f\) is supermodular. Then, the rearranged dependence measure in \eqref{eq:RearrDepMeasure} admits the representation
    \begin{align}\label{eq:R_lineara}
        \sR_\mu(Y,X) =  \frac{\int f \de C_{Y,X}^\uparrow - \int f \de \Pi}{\int f \de M - \int f \de \Pi} =: \sR_f(Y,X),
    \end{align}
    whenever the integrals \(\int f \de M = \int_0^1 f(u,u) \de u\) and \(\int f \de \Pi = \int_0^1\int_0^1 f(u,v) \de u \de v\) exist and are distinct.
In particular, for the convex costs in \eqref{def:convexcost}, the function \(f = -c\) is supermodular and
    the expression in \eqref{eq:R_lineara} simplifies to
    \begin{align}\label{rem:fxp1}
        \sR_{-c}(Y,X) = \frac{-\int c \de C_{Y,X}^\uparrow + \int c \de \Pi}{\int c \de \Pi} = 1- \frac{\int c \de C_{Y,X}^\uparrow}{\int c \de \Pi}.
    \end{align}
    Now, if \(C_{Y,X}^\uparrow\) is a fixed point of \(\cT\), we obtain from the representation of the Wasserstein correlation in \eqref{eqthm:RepWasserstein} the identity
    \begin{align}\label{rem:fxp2}
        \dW_c(Y,X) = \frac{\int c \de C_{Y,X}^\uparrow}{\int c \de \Pi}.
    \end{align}
    Combining \eqref{rem:fxp1} and \eqref{rem:fxp2}, yields the following result.

\begin{proposition}\label{prop:motiv_fp}
    For any convex cost function \(c\) in \eqref{def:convexcost} and for all \(C\in \cC\) with \(C^\uparrow\in \cC_\cT\), we have
    \begin{align}
    \sR_{-c}(Y,X) + \dW_c(Y,X) = 1 \qquad \text{where } (Y,X)\sim C.
    \end{align}
\end{proposition}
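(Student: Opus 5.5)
The argument simply combines the two representations already derived just before the statement. Fix a convex cost \(c\) as in \eqref{def:convexcost}, let \(C\in\cC\) with \(C^\uparrow\in\cC_\cT\), and let \((Y,X)\sim C\). Write \(\int c\de\Pi=\int_0^1\int_0^1 c(u,u')\de u'\de u\), which is the common denominator in both measures. The plan has four steps.

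\emph{Wasserstein side.} By Corollary \ref{thm:RepWasserstein},
\[
\dW_c(Y,X)=\frac{\int c\de \cS(C^\uparrow)}{\int c\de\Pi}.
\]
Since \(C^\uparrow\in\cC^\uparrow\), Corollary \ref{cor_TeqS} gives \(\cS(C^\uparrow)=\cT(C^\uparrow)\). The fixed-point assumption then gives \(\cT(C^\uparrow)=C^\uparrow\). Together these yield \eqref{rem:fxp2}. As a cross-check, Theorem \ref{prop13} gives \(\cT(C)=\cT(C^\uparrow)=C^\uparrow\), so the first identity in Theorem \ref{cor_TWR} leads to the same formula.

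\emph{Rearranged side.} Take \(f=-c\), which is supermodular because \(c\) is submodular. Then formula \eqref{eq:R_lineara} applies. Since \(c(u,u)=h(0)=0\), we have \(\int f\de M=0\), and \eqref{eq:R_lineara} reduces to \eqref{rem:fxp1}:
\[
\sR_{-c}(Y,X)=1-\frac{\int c\de C^\uparrow}{\int c\de\Pi}.
\]

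\emph{Conclusion.} Adding the two displays gives \(\sR_{-c}(Y,X)+\dW_c(Y,X)=1\).

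\emph{Main obstacle.} The only delicate point is checking that \eqref{eq:R_lineara} is applicable, i.e.\ that \(\int c\de\Pi\) is finite and nonzero, so that it differs from \(\int c\de M=0\). Finiteness holds because \(h\) is convex, hence continuous and bounded on \([-1,1]\). For positivity, \(h\ge0\) follows from convexity together with \(h(0)=0\) and strict convexity at \(0\) (as is implicitly assumed in \eqref{def:WassersteinCorrelation}). Strict convexity at \(0\) then makes \(h>0\) on a neighbourhood of \(0\) minus the origin, so \(\int c\de\Pi>0\).
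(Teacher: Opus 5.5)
Your main argument is the same as the paper's. You get \eqref{rem:fxp2} from Corollary~\ref{thm:RepWasserstein} together with \(\cS(C^\uparrow)=\cT(C^\uparrow)=C^\uparrow\), using Corollary~\ref{cor_TeqS} and the fixed-point property. You get \eqref{rem:fxp1} from \eqref{eq:R_lineara} with \(f=-c\) and \(\int c\de M=h(0)=0\). Adding the two identities gives the result.

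The one flaw is in your extra check of the denominator. The paper does not make this check, since \(\int c\de\Pi\neq 0\) is already presupposed by \eqref{def:WassersteinCorrelation}, and it is the same denominator in both measures. You assert two things:
\begin{itemize}
\item that \(h\ge 0\) follows from convexity, \(h(0)=0\) and strict convexity at \(0\);
\item that \(h>0\) on a punctured neighbourhood of \(0\).
\end{itemize}
Both claims are false. For example, \(h(x)=x+x^2\) is strictly convex with \(h(0)=0\), but it is negative on \((-1,0)\).

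The conclusion \(\int c\de\Pi>0\) is nevertheless true and easy to justify. For independent \(U,U'\sim\cU(0,1)\), the difference \(W=U'-U\) is symmetric. Hence \(\int c\de\Pi=\E\bigl[\tfrac12\bigl(h(W)+h(-W)\bigr)\bigr]\), and \(\tfrac12\bigl(h(w)+h(-w)\bigr)>h(0)=0\) for every \(w\neq0\) by strict convexity at \(0\). With this replacement your proof is complete.
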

Hence, on the set \(\cC_\cT\), the value of \(\sR_{-c}\) is completely determined by \(\dW_c\) and vice versa.
In the remainder of this section, we investigate the fixed point set \(\cC_\cT\) in more detail. Therefore, recall from Lemma \ref{lem:propuppprod}
that the upper product \(C\vee \Pi\) is SI. This implies
 \(\cC_\cT\subsetneq \cC^\uparrow\).
Moreover, by Proposition \ref{thm:main_trafo}, we know that \(C\vee \Pi = \cS(C)\). This yields the necessary and sufficient fixed point condition
\begin{align}\label{eq:fixpointEquation}
    C = \cS(C).
\end{align}
Writing \(h_v(t) := \partial_2^+ C(v,t)\), the identity in \eqref{eq:fixpointEquation} is equivalent to \begin{align}\label{eq:involution}
    h_v(t) = h_v^{-1}(t)
\end{align}
for all \(v\in [0,1]\) and for all \(t\in [0,1]\) outside a \(\lambda\)-null set which may depend on \(v\). In other words, for every \(v\in (0,1)\), the function \(h_v\) is self-inverse in the generalized-inverse sense.

In the following theorem,
we provide a construction of infinitely many copulas that satisfy the above identity. Therefore, we denote by
\(\cF_c\) the class of continuous and strictly increasing distribution functions on \(\R\). For i.i.d. random variables \(S\) and \(T\) with distribution function \(F\in \cF_c\), we write \(F\ast F =  F_{S+T}\) for the distribution function of the convolution.

\begin{theorem}[Construction of fixed points for \(\cT\)]\label{lem:SI_cop_construction}
    For \(F\in \cF_c\)\,, define \(G := F\ast F\). Then, for
    \begin{align}\label{def:h_u_conv}
        k_v(t) := F(G^{-1}(v)-F^{-1}(t))\quad \text{for all } (v,t)\in (0,1)^2,
\end{align}
the function \(C(v,u):= \int_0^u k_v(t) \de t\), \((v,u)\in [0,1]^2\), is an SI copula with \(\cT(C) = C\).
\end{theorem}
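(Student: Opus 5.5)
We first verify that \(C\) is a copula via Lemma \ref{charSIcop}, then show it is SI, and finally check the fixed point condition \eqref{eq:involution}.

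\textbf{Copula and SI property.} Let \(S,T\) be i.i.d.\ with distribution function \(F\), and put \(X:=F(T)\sim\cU(0,1)\) and \(Y:=G(S+T)\sim\cU(0,1)\). Since \(F\in\cF_c\), \(G\) is continuous and strictly increasing, so
\[
P(Y\le v\mid X=t)=P(S+T\le G^{-1}(v)\mid T=F^{-1}(t))=F(G^{-1}(v)-F^{-1}(t))=k_v(t).
\]
Hence \(k_v\) is a version of \(\partial_2 C_{Y,X}(v,t)\). Conditions \ref{charSIcop1}--\ref{charSIcop3} of Lemma \ref{charSIcop} hold: \(k_v\) is increasing in \(v\), and \(\int_0^1 k_v(t)\de t=P(Y\le v)=v\) by the tower property. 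One can also verify these directly: the substitution \(t=F(s)\) gives \(\int_0^1 k_v\de t=\int F(G^{-1}(v)-s)\de F(s)=G(G^{-1}(v))=v\). So \(C=C_{Y,X}\) is a copula. Since \(F^{-1}\) is increasing and \(F\) is increasing, \(t\mapsto k_v(t)\) is decreasing, so \(C\) is SI. Moreover \(k_v\) is continuous on \((0,1)\), so \(\partial_2^+C(v,\cdot)=k_v\) there.

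\textbf{Fixed point.} By Corollary \ref{cor_TeqS}, \(\cT(C)=\cS(C)\), so by \eqref{eq:fixpointEquation} and \eqref{eq:involution} it suffices to show \(k_v^{-1}(w)=k_v(w)\) for all \(w\in(0,1)\), with the generalized inverse \eqref{defgeninv}. Fix \(v\in(0,1)\) and set \(a:=G^{-1}(v)\). Then \(k_v(t)=F(a-F^{-1}(t))\) is continuous and strictly decreasing on \((0,1)\), with limits \(1\) as \(t\downarrow0\) and \(0\) as \(t\uparrow1\), because \(F^{-1}(t)\to\mp\infty\). For \(w\in(0,1)\), the inverse is therefore the unique \(t\) with \(F(a-F^{-1}(t))=w\), that is, \(F^{-1}(t)=a-F^{-1}(w)\), which gives
\[
k_v^{-1}(w)=F(a-F^{-1}(w))=k_v(w).
\]
The map \(x\mapsto a-x\) is an involution on \(\R\), and \(k_v\) is its conjugate \(F\circ(a-\cdot)\circ F^{-1}\); this is what makes \(k_v\) self-inverse.

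Thus \(h_v=h_v^{-1}\) on \((0,1)\) for every \(v\in(0,1)\), and integrating gives \(\cS(C)=C\), i.e.\ \(\cT(C)=C\). The only delicate step is bookkeeping at the endpoints \(t\in\{0,1\}\), \(v\in\{0,1\}\) and the behaviour of the generalized inverse there. These are Lebesgue-null sets, or are covered by the convention \(\inf\emptyset=1\) together with the boundary values \(C(0,u)=0\) and \(C(1,u)=u\), so they do not affect the integrals. The strict monotonicity and continuity of \(F\), guaranteed by \(F\in\cF_c\), is what allows the generalized inverse to be the ordinary inverse.
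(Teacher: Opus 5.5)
Your proof is correct and follows essentially the same route as the paper. You verify Lemma \ref{charSIcop} via the convolution identity \(\int_0^1 k_v(t)\de t = G(G^{-1}(v)) = v\), obtain SI from the monotonicity of \(t\mapsto k_v(t)\), and check the self-inverse condition \eqref{eq:involution}. The differences are only that you carry out explicitly the inverse computation the paper calls ``straightforward'' (via the conjugation \(k_v = F\circ(G^{-1}(v)-\cdot)\circ F^{-1}\) of the involution \(x\mapsto G^{-1}(v)-x\)), and that you additionally identify \(C\) as the copula of \((S+T,T)\).
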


\begin{proof}    To show that \(C\) is a copula, we verify Lemma \ref{charSIcop}. Properties \ref{charSIcop1} and \ref{charSIcop2} are immediate.
    To prove \ref{charSIcop3}, let \(S,T\sim F\) be independent. Then we have
    \begin{align*}
        \int_0^1 k_v(t) \de t &= \int_0^1 F(G^{-1}(v) - F^{-1}(t)) \de t
        = \int_{-\infty}^\infty P(S\leq G^{-1}(v) - y) \de P^T(y) \\
        &= \int_{-\infty}^\infty P(S + T \leq G^{-1}(v) \mid T = y) \de P^T(y)
        = G(G^{-1}(v)) = v,
    \end{align*}
    where we substitute \(y = F^{-1}(t)\) for the second equality. For the third equality, we use independence of \(S\) and \(T\). The fourth equality follows from disintegration and \(G = F_{S+T}\). For the last equality, we use that \(F\) and, thus, \(G\) is continuous. Hence, \(C\) is a copula, which is SI because \(k_v\) is decreasing for all \(v\). It is straightforward to verify that \(k_v\) satisfies \eqref{eq:involution} for all \(v,t\in (0,1)\). Hence, we have \(\cT(C) = C\).
\end{proof}

We denote by
\(\cC_* := \{C\in \cC\mid \partial_2 C(v,t) = k_v(t) \,\forall v,t\in (0,1), \, k_v \text{ given by } \eqref{def:h_u_conv} \text{ for } F\in
\cF_c\}\)
the set of copulas constructed via convolutions in Theorem \ref{lem:SI_cop_construction}.
The following result summarizes the preceding considerations.

\begin{corollary}[Necessary and sufficient fixed point conditions]\label{theconv}
The following assertions hold true.
\begin{enumerate}[label=(\roman*)]
    \item \(\cC_\cT = \{C\in \cC \mid C = \cS(C)\}\)
    \item \(\cC_\cT = \{C\in \cC\mid h_v(t) = \partial_2 C(v,t) \text{ satisfies }\eqref{eq:involution} \text{ for all }v\in [0,1] \text{ and } \lambda\text{-almost all } t\in (0,1)\}\).
    \item \(\cC_*\subseteq \cC_\cT \subsetneq \cC^\uparrow\).
\end{enumerate}
\end{corollary}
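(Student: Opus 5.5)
For (i), I would start from Lemma \ref{lem:propuppprod}\,\ref{lem:propuppprod3}, which gives \(C\vee\Pi\in\cC^\uparrow\) for every \(C\); hence any fixed point of \(\cT\) is SI, and \(\cC_\cT\subseteq\cC^\uparrow\) (consistent with its definition \eqref{def:C_T}). For an SI copula \(C\), Corollary \ref{cor_TeqS} gives \(\cT(C)=\cS(C)\), so \(\cT(C)=C\) if and only if \(\cS(C)=C\). Conversely, \(\cS\) is only defined on \(\cC^\uparrow\), so the set \(\{C\in\cC\mid C=\cS(C)\}\) is implicitly restricted to \(C\in\cC^\uparrow\). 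The two sets therefore coincide.

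For (ii), I would translate \(C=\cS(C)\) into derivatives. Both \(C\) and \(\cS(C)\) are absolutely continuous in the second argument. By Definition \ref{def_reflectS}, \(\cS(C)(v,u)=\int_0^u h_v^{-1}(t)\de t\) with \(h_v=\partial_2^+C(v,\cdot)\), while \(C(v,u)=\int_0^u h_v(t)\de t\). Equality of the integrals for all \(u\) is then equivalent to \(h_v=h_v^{-1}\) \(\lambda\)-a.e. for each \(v\), which is \eqref{eq:involution}.

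There is one subtlety in the converse direction. A copula \(C\in\cC\) satisfying \eqref{eq:involution} with \(h_v=\partial_2C(v,\cdot)\) must first be shown to be SI before \(\cS\) makes sense. This follows because \(h_v^{-1}\) is decreasing, so \(h_v\) agrees a.e. with a decreasing function. Then \(C(v,\cdot)\) is concave for every \(v\), so \(C\) is SI, and \(\partial_2 C\) and \(\partial_2^+C\) agree a.e. I expect this to be the main point requiring care. The difficulty is making precise that the generalized inverse in \eqref{eq:involution} is taken of the right-continuous decreasing version, and that null-set modifications do not change \(h_v^{-1}\).

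For (iii), the inclusion \(\cC_*\subseteq\cC_\cT\) is exactly Theorem \ref{lem:SI_cop_construction}. The inclusion \(\cC_\cT\subseteq\cC^\uparrow\) was shown in (i). For strictness, I would exhibit an SI copula that is not fixed. Take \(M\in\cC^\uparrow\): Lemma \ref{lem:propuppprod} gives \(\cT(M)=M\vee\Pi=\Pi\vee M=\Pi\ne M\). Symmetry of the upper product is evident from \eqref{def:uppprod}, since \(\partial_2M(u,t)=\1_{t<u}\) and \(\partial_2\Pi(v,t)=v\), and a direct computation gives \(\int_0^1\min\{\1_{t<u},v\}\de t=uv\). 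Hence \(M\in\cC^\uparrow\setminus\cC_\cT\).
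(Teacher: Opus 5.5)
Your proposal is correct and follows essentially the paper's own reasoning in the paragraph preceding the corollary: Lemma \ref{lem:propuppprod}\,\ref{lem:propuppprod3} together with \(\cT=\cS\) on \(\cC^\uparrow\) gives (i), comparing second-argument derivatives gives (ii), and Theorem \ref{lem:SI_cop_construction} plus the witness \(\cT(M)=\Pi\ne M\) gives (iii); your check that \eqref{eq:involution} forces \(C\) to be SI is a refinement the paper leaves implicit. One small correction: the upper product is not symmetric in general, since \(E\vee D\) is the transpose of \(D\vee E\), so \(M\vee\Pi=\Pi\) should rest on your direct computation \(\int_0^1\min\{\1_{t<u},v\}\de t=uv\) rather than on a symmetry claim.
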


In the following example, we verify Theorem \ref{lem:SI_cop_construction} for the standard normal distribution function.

\begin{example}
    For \(F = \Phi\), we have \(G(x) = (F\ast F)(x) = \Phi(x/\sqrt{2})\).
    This implies for \(\varrho = 1/\sqrt{2}\) that
    \begin{align}
        k_v(t) = \Phi(\sqrt{2} \Phi^{-1}(v) - \Phi^{-1}(t)) = \Phi\left(\frac{\Phi^{-1}(v) - \varrho \Phi^{-1}(t)}{\sqrt{1-\varrho^2}}\right) = \partial_2 C_\varrho^{\text{Gauss}}(v,t),
    \end{align}
    where the first equality is by definition of \(k_v\) in \eqref{def:h_u_conv}.
     Hence, \(C_{1/\sqrt 2}^{\text{Gauss}}\) is a fixed point of \(\cT\), which is in line with Equation \eqref{eq:T_Gauss}.
\end{example}

\section{A dependence order for \(\cT\)}\label{sec5}

To construct and analyze structural properties of dependence measures, a dependence order that ranks their values across statistical models is essential. Such a dependence order must range from independence to perfect functional dependence (recall Lemma \ref{prop_meta}).
The dependence order \(\preccurlyeq_1\) in Proposition \ref{cor_lc12} is quite restrictive because it acts only on the diagonal of the Markov product \(C\ast C\).
In contrast, the dependence orders \(\preccurlyeq_2\) and \(\preccurlyeq_3\) defined by a pointwise ordering of \(\cT(C)\) and \(\cT^2(C)\) in Corollary \ref{cor_C2_C3} generate a broader class of dependence measures.

In this section, we focus on pointwise ordering criteria on the copulas \(\cT(C)\) and \(\cT^2(C)\), respectively.
Therefore recall that, for bivariate copulas \(D\) and \(E\),
\begin{align}\label{eq_lo_supsub}
    D\leq_{lo} E \qquad \text{implies} \qquad \int f \de D \leq (\geq) \int f \de E
\end{align}
for all supermodular (respectively, submodular) functions \(f\) such that the expectations exist; see \cite[Theorem 2.5]{Muller-2000}.
Since the convex costs in \eqref{def:convexcost} are submodular functions, it follows from \eqref{cor:T_rep} that
\begin{align}\label{eq_consT}
    \cT(C_{Y,X}) \leq_{lo} \cT(C_{Y',X'}) \qquad \text{implies} \qquad \dW_c(Y,X) \geq \dW_c(Y',X').
\end{align}
For the following, we assume that \(\mu\) is increasing in the pointwise order on \(\cC^\uparrow\). Classical examples are concordance measures such as Kendall's tau, Spearman's rho, or the functionals in \eqref{eq:R_lineara}. For such \(\mu\), we obtain from \eqref{cor:T_rep} that
\begin{align}\label{eq_consT2}
    \cT^2(C_{Y,X}) \leq_{lo} \cT^2(C_{Y',X'}) \qquad \text{implies} \qquad \sR_\mu(Y,X) \leq \sR_\mu(Y',X').
\end{align}
To characterize a pointwise ordering of \(\cT\) and \(\cT^2\), we consider
the \emph{Schur order for copula derivatives} (with respect to the second component) defined for bivariate copulas \(D\) and \(E\) by
\begin{align}\label{def:partial2_S}
    D\leq_{\partial_2 S} E \qquad :\Longleftrightarrow \qquad \partial_2 D(v,\cdot) \prec_S \partial_2 E(v,\cdot) \quad \text{for all } v\in (0,1);
\end{align}
see \cite{Ansari-2021}.
Here, for measurable functions \(f,g\colon (0,1)\to [0,1]\), the Schur order \(f\prec_S g\) is defined via their decreasing rearrangements by \(\int_0^v f^*(t) \de t \leq \int_0^v g^*(t) \de t\) for all \(v\in (0,1)\) with equality for \(v = 1\) \cite[Section 3.2]{Ru-2013}.
Since constant functions are minimal elements in \(\prec_S\), the independence copula serves as the global minimal element for the \(\leq_{\partial_2 S}\)-order.
In contrast, among functions mapping into the interval \([0,1]\), maximal elements in Schur order take values in \(\{0,1\}\) \(\lambda\)-almost surely. This implies that perfect dependence copulas are maximal elements in the \(\leq_{\partial_2 S}\)-order.
Note that the \(\leq_{lo}\)-order and the \(\leq_{\partial_2 S}\)-order coincide on the class \(\cC^\uparrow\) of SI copulas; see \cite[Lemma 3.16\,(ii)]{Ansari-2021}.

The following theorem establishes the equivalence between the Schur order for copula derivatives in \eqref{def:partial2_S}, the pointwise order of upper products with \(\Pi\), and the pointwise order of rearranged copulas.

\begin{theorem}[Pointwise ordering of upper products with \(\Pi\)]\label{prop:ord_T}
Let \(D\) and \(E\) be bivariate copulas. Then, the following statements are equivalent.
    \begin{enumerate}[label=(\roman*)]
        \item \label{prop:ord_T1} \(D\leq_{\partial_2 S} E\),
        \item \label{prop:ord_T3} \(\cT(D) \geq_{lo} \cT(E)\),
        \item \label{prop:ord_T3b} \(\cT(D) \geq_{\partial_2 S} \cT(E)\),
        \item \label{prop:ord_T4} \(\cT^2(D) \leq_{lo} \cT^2(E)\),
        \item \label{prop:ord_T4b} \(\cT^2(D) \leq_{\partial_2 S} \cT^2(E)\).
    \end{enumerate}
\end{theorem}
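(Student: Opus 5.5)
The plan is to reduce everything to SI copulas, where \(\leq_{lo}\) and \(\leq_{\partial_2 S}\) coincide (\cite[Lemma 3.16\,(ii)]{Ansari-2021}), and then use the reflection \(\cS\) to exchange the roles of \(\cT\) and \(\cT^2\). Since \(\cT(D),\cT(E),\cT^2(D),\cT^2(E)\) are all SI (Lemma \ref{lem:propuppprod}\,\ref{lem:propuppprod3} and Theorem \ref{prop13}), we immediately get \ref{prop:ord_T3}\(\Leftrightarrow\)\ref{prop:ord_T3b} and \ref{prop:ord_T4}\(\Leftrightarrow\)\ref{prop:ord_T4b}.

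For \ref{prop:ord_T1}\(\Leftrightarrow\)\ref{prop:ord_T4b}, I use the fact that the Schur order depends only on decreasing rearrangements. By \eqref{def:inc_rearranged_copula}, \(\partial_2 D^\uparrow(v,\cdot)=(\partial_2 D(v,\cdot))^*\) almost everywhere, and this function is already decreasing, so it is its own rearrangement. Hence \(\partial_2 D(v,\cdot)\prec_S \partial_2 E(v,\cdot)\) if and only if \(\partial_2 D^\uparrow(v,\cdot)\prec_S\partial_2 E^\uparrow(v,\cdot)\). Together with \(\cT^2(C)=C^\uparrow\) from Theorem \ref{prop13}, this gives \(D\leq_{\partial_2 S}E \Leftrightarrow \cT^2(D)\leq_{\partial_2 S}\cT^2(E)\). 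A small point needs care: \eqref{def:partial2_S} is stated for all \(v\), while derivatives are only defined almost everywhere. I would handle this with the version \(h_v\) from Lemma \ref{charSIcop}, or by noting that \(v\mapsto\int_0^w h_v^*\) is monotone in \(v\), so the inequality extends from almost all \(v\) to all \(v\).

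The core step is \ref{prop:ord_T3}\(\Leftrightarrow\)\ref{prop:ord_T4}, i.e. that \(\cS\) reverses \(\leq_{lo}\) on \(\cC^\uparrow\). By \eqref{eq:propS} we have \(\cT(C)=\cS(C^\uparrow)\) and \(\cS\) is an involution (Proposition \ref{the:refS}), so it suffices to show, for \(A,B\in\cC^\uparrow\), that \(A\leq_{lo}B\) implies \(\cS(A)\geq_{lo}\cS(B)\). Fix \(v\) and write \(f=\partial_2^+A(v,\cdot)\) and \(g=\partial_2^+B(v,\cdot)\); both are decreasing with equal integral \(v\). The hypothesis says \(F(u):=\int_0^u f\leq G(u):=\int_0^u g\) for all \(u\), and the claim is \(\int_0^u f^{-1}\geq\int_0^u g^{-1}\).

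To prove this, I use the layer-cake identity \(\int_0^u f^{-1}(t)\,dt=\int_0^1\min\{f(x),u\}\,dx\), which is the content of \eqref{eqlem:propuppprod} integrated in \(u\). For a decreasing \(f\), \(\int_0^1\min\{f,u\}=\min_{s\in[0,1]}\bigl(su+\int_s^1 f\bigr)=\min_s\bigl(su+v-F(s)\bigr)\); the minimum is attained at \(s=f^{-1}(u)\). Since \(F\leq G\) pointwise, \(su+v-F(s)\geq su+v-G(s)\) for each \(s\), and taking minima over \(s\) yields the claim.

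Applying this with \(A=D^\uparrow\), \(B=E^\uparrow\) gives \ref{prop:ord_T4}\(\Rightarrow\)\ref{prop:ord_T3}. For the converse, apply it to \(A=\cT(E)\), \(B=\cT(D)\) and use \(\cS(\cT(C))=C^\uparrow\). The main obstacle I expect is this variational representation. It requires checking that the infimum formula holds with generalized inverses and right-continuous versions (using \(f(x)>u\Leftrightarrow x<f^{-1}(u)\)), and that the nullsets where \(\partial_2^+\) differs from \(\partial_2\) are harmless.
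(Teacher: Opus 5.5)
Your proof is correct, but its core step takes a different route from the paper's. The paper proves (i)\(\Leftrightarrow\)(ii) in one stroke, working directly with the possibly non-monotone derivatives. By Hardy--Littlewood--P\'olya, \(\partial_2 D(v,\cdot)\prec_S\partial_2 E(v,\cdot)\) is equivalent to \(\partial_2 D(v,U)\leq_{cx}\partial_2 E(v,U)\). By the stop-loss characterization of the convex order, this is equivalent to \(\int_0^1\min\{\partial_2 D(v,t),u\}\de t\geq\int_0^1\min\{\partial_2 E(v,t),u\}\de t\) for all \(u\), which is literally \(\cT(D)\geq_{lo}\cT(E)\). The paper then gets (ii)\(\Leftrightarrow\)(v) ``similarly'', and handles the SI equivalences exactly as you do.

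You instead first strip off the rearrangement: both the Schur order and \(\cT\) depend only on the distribution of \(\partial_2 C(v,\cdot)\), so everything reduces to the monotone case. You then prove a self-contained lemma, namely that \(\cS\) is \(\leq_{lo}\)-antitone on \(\cC^\uparrow\). Your identity \(\int_0^1\min\{f,u\}=\min_s(su+v-F(s))\) is the stop-loss (concave-conjugate) duality specialized to decreasing \(f\), and you only need its easy half. The reverse implication, which the paper takes from the ``iff'' in the stop-loss characterization, you obtain for free from \(\cS\circ\cT(C)=C^\uparrow\) and \(\cS\circ\cS=\mathrm{id}\) (Propositions \ref{thm:main_trafo} and \ref{the:refS}); this is the biconjugation step in disguise.

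What each route buys:
\begin{itemize}
\item The paper's argument is shorter, given the two cited theorems, and treats non-monotone derivatives without passing through \(C^\uparrow\) for (i)\(\Leftrightarrow\)(ii).
\item Yours is elementary and isolates a structural fact the paper never states explicitly: \(\cS\) is an order-reversing involution of \((\cC^\uparrow,\leq_{lo})\). This complements the \(d_1\)-isometry in Proposition \ref{the:refS} and fits the reflection picture.
\end{itemize}

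A minor remark on your a.e.\ caveat in (i)\(\Leftrightarrow\)(v): it is unnecessary. For each fixed \(v\), \(u\mapsto D^\uparrow(v,u)\) is the integral of \((\partial_2 D(v,\cdot))^*\), so Lebesgue differentiation gives \(\partial_2 D^\uparrow(v,\cdot)=(\partial_2 D(v,\cdot))^*\) a.e. Hence the comparison holds for every \(v\) separately. Had an extension from almost all \(v\) been needed, monotonicity of \(v\mapsto\int_0^w h_v^*\) alone would not suffice. The correct tool is that this map is \(1\)-Lipschitz, because \(0\leq h_v^*-h_{v'}^*\) for \(v'\leq v\) and \(\int_0^1 (h_v^*-h_{v'}^*)=v-v'\).
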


\begin{proof}
For \(h_v^D(t) = \partial_2 D(v,t)\) and \(h_v^E(t) = \partial_2 E(v,t)\), we have \(h_v^D\prec_S h_v^E\) by definition of the Schur order.
By the Hardy-Littlewood-Polya theorem \cite[Theorem 3.21]{Ru-2013}, the latter is equivalent to \(h_v^D(U) \leq_{cx} h_v^E(U)\) for \(U\sim \cU(0,1)\).
By a characterization of the convex order via the stop-loss order, this again is equivalent to
\(\int_0^1 \min\{h_v(t),u\} \de t \geq \int_0^1 \min\{h_v^E(t),u\} \de t\) for all \(u\); see \cite[Theorems 1.5.3 and 1.5.7]{Muller-2002}. This proves the equivalence of \ref{prop:ord_T1} and \ref{prop:ord_T3}, and similarly, the equivalence of \ref{prop:ord_T3} and \ref{prop:ord_T4b}.
Since, by Theorem \ref{corcharT}\,\ref{corcharT1}, \(\cT(C)\) and \(\cT^2(C)\) are SI copulas and since for SI copulas the \(\leq_{lo}\)-order and the \(\leq_{\partial_2 S}\)-order coincide, \ref{prop:ord_T3} and \ref{prop:ord_T3b} as well as \ref{prop:ord_T4} and \ref{prop:ord_T4b} are equivalent.
\end{proof}

Recall the dependence orders \(\preccurlyeq_i\), \(i=1,2,3\), in Proposition \ref{cor_lc12} and Corollary \ref{cor_C2_C3}. These are implied by \(\leq_{\partial_2 S}\) as follows.

\begin{corollary}
    \(D\leq_{\partial_2 S} E\) implies \(D\preccurlyeq_i E\) for \(i=1,2,3\).
\end{corollary}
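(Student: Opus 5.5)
The plan is to treat the three orders separately. The cases \(i=2,3\) follow directly from Theorem \ref{prop:ord_T}, and the case \(i=1\) needs a short convex-order argument applied to the diagonal of the Markov product.

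\emph{Cases \(i=2,3\).} Assume \(D\leq_{\partial_2 S} E\). The equivalence of \ref{prop:ord_T1} and \ref{prop:ord_T3} in Theorem \ref{prop:ord_T} gives \(\cT(D)\geq_{lo}\cT(E)\), i.e. \(\cT(D)(u,v)\geq \cT(E)(u,v)\) pointwise. This is exactly \(D\preccurlyeq_2 E\). Likewise, the equivalence of \ref{prop:ord_T1} and \ref{prop:ord_T4} gives \(\cT^2(D)\leq_{lo}\cT^2(E)\), which is \(D\preccurlyeq_3 E\).

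\emph{Case \(i=1\).} Here I would use the definition of the Markov product in \eqref{def:stern_product} directly:
\begin{align*}
D\ast D(u,u)=\int_0^1 (\partial_2 D(u,t))^2 \de t = \E\big[(h_u^D(U))^2\big], \qquad U\sim\cU(0,1),
\end{align*}
where \(h_u^D=\partial_2 D(u,\cdot)\), and analogously for \(E\). As in the proof of Theorem \ref{prop:ord_T}, the hypothesis \(h_u^D\prec_S h_u^E\) for all \(u\in(0,1)\) is equivalent, by the Hardy–Littlewood–Pólya theorem, to \(h_u^D(U)\leq_{cx} h_u^E(U)\). Since \(x\mapsto x^2\) is convex, this yields \(D\ast D(u,u)\leq E\ast E(u,u)\) for all \(u\in(0,1)\).

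The endpoints need a separate check. Because copulas are grounded and have uniform margins, \(D\ast D(0,0)=0=E\ast E(0,0)\) and \(D\ast D(1,1)=1=E\ast E(1,1)\). Together these give \(D\preccurlyeq_1 E\).

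The only subtle points are that \(\partial_2 D(u,\cdot)\) is defined only almost everywhere and that the Schur order is imposed only for \(v\in(0,1)\). Neither is an obstacle, because the integrals involved are insensitive to null sets. The main step is therefore identifying the diagonal of \(C\ast C\) as a second moment of the conditional distribution function, after which the convex order finishes the argument.
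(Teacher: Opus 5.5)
Your proof is correct. For \(i=2,3\) you argue exactly as the paper does, by reading off items \ref{prop:ord_T3} and \ref{prop:ord_T4} of Theorem \ref{prop:ord_T}.

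For \(i=1\) your route differs from the paper's. The paper writes
\(D\ast D(u,u) = D^\uparrow\ast D^\uparrow(u,u) \leq E^\uparrow\ast E^\uparrow(u,u) = E\ast E(u,u)\).
That is, it passes to the increasing rearranged copulas, on which \(\leq_{\partial_2 S}\) coincides with \(\leq_{lo}\). It then invokes the external monotonicity result \cite[Theorem 3.7]{Ansari-2021} for Markov products. The outer equalities rest implicitly on the fact that \(\int_0^1(\partial_2 C(u,t))^2\de t\) is invariant under rearrangement.

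You instead note that the diagonal of \(C\ast C\) is the second moment of \(h_u(U)\). You then apply the Hardy--Littlewood--P\'olya characterization with the single convex function \(x\mapsto x^2\). This is self-contained and reuses only the tool already used in the proof of Theorem \ref{prop:ord_T}.

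The paper's detour proves more than is needed: as noted in the remark after Corollary \ref{cor_54}, the cited result compares the Markov products of the SI copulas \(D^\uparrow, E^\uparrow\) pointwise, not only on the diagonal. Your argument extracts exactly the diagonal information that \(\preccurlyeq_1\) requires and avoids the external citation. Your explicit check of the endpoints \(u\in\{0,1\}\), which the paper leaves implicit, is also correct.
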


\begin{proof}
    For \(i=1\), the statement follows from \(D\ast D(u,u) = D^\uparrow\ast D^\uparrow (u,u) \leq E^\uparrow\ast E^\uparrow(u,u) = E\ast E(u,u)\), where the inequality is due to \cite[Theorem 3.7]{Ansari-2021}. For \(i=2,3\), the statement is given by Theorem \ref{prop:ord_T} noting that \(D\preccurlyeq_2 E\) is \(\cT(D) \geq_{lo} \cT(E)\) as well as \(D\preccurlyeq_3 E\) is \(\cT^2(D)\leq_{lo} \cT^2(E)\).
\end{proof}

\begin{remark}
    We refer to \cite{Ansari-Rockel-2023} for an overview of well-known parametric Archimedean, extreme-value, elliptical, and unclassified copula families that are \(\leq_{lo}\)- or \(\leq_{\partial_2 S}\)-increasing in their parameter. In particular, the Gaussian copula \(C_{\varrho}^{\text{Gauss}}\) is \(\leq_{\partial_2 S}\)-increasing in \(|\varrho|\). Hence, Theorem \ref{prop:ord_T} confirms the ordering in \eqref{eq_Gaussprop}.
\end{remark}

Combining Theorem \ref{prop:ord_T} with Eq. \eqref{eq_consT} and \eqref{eq_consT2}, we arrive at the following comparison result for Chatterjee's rank correlation, Wasserstein correlations, and rearranged dependence measures. The statement for \(\xi\) has already been established in \cite[Lemma 2.10]{Ansari-Rockel-2023}, using different notation.

\begin{corollary}\label{cor_54}
    For \(D,E\in \cC\), let \((Y,X)\sim D\) and \((Y',X')\sim E\). Then \(D\leq_{\partial_2 S} E\) implies
\begin{align*}
     \xi(Y,X)\leq \xi(Y',X'),\quad \dW_c(Y,X) \leq \dW_c(Y',X'), \quad\text{and}\quad \sR_\mu(Y,X)\leq \sR_\mu(Y',X').
\end{align*}
\end{corollary}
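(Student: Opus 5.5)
The plan is to chain the three implications through Theorem \ref{prop:ord_T} and the representations established earlier. Assume \(D\leq_{\partial_2 S} E\). By Theorem \ref{prop:ord_T}, this gives both \(\cT(D)\geq_{lo}\cT(E)\) and \(\cT^2(D)\leq_{lo}\cT^2(E)\).

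\emph{Wasserstein correlation.} The convex costs \(c\) in \eqref{def:convexcost} are submodular. Hence \eqref{eq_consT}, which rests on \eqref{cor:T_rep} and \eqref{eq_lo_supsub}, applies with \(C_{Y',X'}=E\) in the role of the smaller copula. This yields \(\dW_c(Y,X)\leq \dW_c(Y',X')\). Since the integrals of \(c\) over bounded domains are finite, the integrability condition in \eqref{eq_lo_supsub} is met.

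\emph{Rearranged dependence measure.} This is \eqref{eq_consT2} applied to \(\cT^2(D)\leq_{lo}\cT^2(E)\), using \(\sR_\mu(Y,X)=\mu(\cT^2(C))\) from Theorem \ref{cor_TWR}. As in the surrounding discussion, we assume \(\mu\) is increasing in the pointwise order on \(\cC^\uparrow\).

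\emph{Chatterjee's \(\xi\).} This case is the only one requiring some care. I would use the representation \eqref{def_xi_cop}, i.e.
\(\xi(C)=6\int_0^1\int_0^1(\partial_2 C(v,t))^2\,\de t\,\de v-2\).
For fixed \(v\), the Schur order \(\partial_2 D(v,\cdot)\prec_S\partial_2 E(v,\cdot)\) is equivalent, by Hardy--Littlewood--P\'olya (as in the proof of Theorem \ref{prop:ord_T}), to the convex order \(\partial_2 D(v,U)\leq_{cx}\partial_2 E(v,U)\). Applying this to the convex function \(x\mapsto x^2\) gives
\(\int_0^1(\partial_2 D(v,t))^2\de t\leq\int_0^1(\partial_2 E(v,t))^2\de t\),
and integrating over \(v\) yields \(\xi(Y,X)\leq\xi(Y',X')\).

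Alternatively, the same conclusion follows from the corollary preceding this statement, which shows \(D\preccurlyeq_1 E\). Combined with \(\xi(C)=\psi(C\ast C)\) from \eqref{rep_xi} and the fact that \(\psi\) is monotone in the diagonal, this again gives the inequality.

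The main potential obstacle is only a bookkeeping point: the \(\partial_2 S\)-condition holds for all \(v\), while copula derivatives are defined only almost everywhere. Since all quantities involved are integrals, null sets do not matter, and no real difficulty arises.
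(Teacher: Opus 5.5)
Your proposal is correct and follows the paper's argument. Theorem~\ref{prop:ord_T} turns \(D\leq_{\partial_2 S}E\) into \(\cT(D)\geq_{lo}\cT(E)\) and \(\cT^2(D)\leq_{lo}\cT^2(E)\), and \eqref{eq_consT} and \eqref{eq_consT2} then give the claims for \(\dW_c\) and \(\sR_\mu\), with \(\mu\) assumed \(\leq_{lo}\)-increasing on \(\cC^\uparrow\) as you note. For \(\xi\) the paper only cites \cite[Lemma 2.10]{Ansari-Rockel-2023}. Your direct Hardy--Littlewood--P\'olya argument with \(\varphi(x)=x^2\) applied to \eqref{def_xi_cop}, or alternatively the route via \(\preccurlyeq_1\) and \(\xi(C)=\psi(C\ast C)\), is a valid self-contained replacement for that citation.
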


\begin{remark}
\begin{enumerate}[label=(\alph*)]
\item By Corollary \ref{cor_54}, the \(\leq_{\partial_2 S}\)-order underlies Chatterjee's rank correlation, Wasserstein correlations with convex cost, and rearranged dependence measures. Note that this order also underlies rank-based dependence measures of the type \eqref{eq_rep_chat_phi} and measures of sensitivity in \eqref{eq_rep_chat_phi2}; see \cite[Proposition 3.7]{Ansari-LFT-2023}.
    \item In Theorem \ref{prop:ord_T}, \(D\leq_{\partial_2 S} E\) is \emph{not} equivalent to \(D\leq_{lo} E\). As a counterexample, consider the lower Fr\'{e}chet copula \(W(u,v) = \max\{u+v-1,0\}\), which models countermonotonicity (i.e., \((Y,X)\sim W\) implies \(Y = 1-X\) almost surely). Since \(W\) is a perfect dependence copula, it is a maximal element with respect to \(\leq_{\partial_2 S}\), whereas \(W\) is the least element in the pointwise order of copulas. Hence, we have \(W\leq_{lo} \Pi\) but \(\Pi \leq_{\partial_2 S} W\), where both inequalities are strict.
    \item Upper products and Markov products exhibit fundamentally different behavior, as noted in Remark \ref{rem_prop_upp_prod}. This discrepancy also arises at the level of orderings. For example, let \(D\) and \(E\) be SI copulas with \(D\leq_{lo} E\). Then, we have \(D\vee \Pi \geq_{lo} E \vee \Pi\) by Theorem \ref{prop:ord_T}, but \(D\ast \Pi\leq_{lo} E\ast \Pi\); see \cite[Corollary 3.21]{Ansari-2021}. For the Markov product, we know that \(D\ast D \leq_{lo} E \ast E\) as a consequence of \cite[Theorem 3.7]{Ansari-2021}, whereas \(D\vee D = M = E\vee E\); see Lemma \ref{lem:propuppprod}\,\ref{lem:propuppprod2}.
\end{enumerate}
\end{remark}

\section{Continuity of \(\cT\)}\label{sec6}

While Markov products and upper products have fundamentally different ordering properties, they exhibit similar continuity properties as we study in this section.
More precisely, we are interested in a mode of convergence for \(C\) that ensures uniform convergence of the transforms \(\cT(C)\) and \(\cT^2(C)\) and thus implies continuity properties of Wasserstein correlations and rearranged dependence measures as a consequence of Theorem \ref{cor_TWR}.

The investigation of continuity properties for dependence measures is motivated by \cite{buecher2024} who show that any dependence measure \(\kappa\) satisfying Axioms \ref{prop2} and \ref{prop3} fails to be weakly continuous (that is, \((Y_n,X_n)\to (Y,X)\) weakly does not imply \(\kappa(Y_n,X_n)\to \kappa(Y,X)\)). This lack of continuity has several consequences: first, the empirical distribution function cannot serve as a basis for a consistent plug-in estimator. Second, these dependence measures lack robustness under slight model misspecifications within any metric of weak convergence. Furthermore, independence tests based on such measures may suffer from trivial power against sequences of alternatives converging weakly to independence.

A suitable mode of convergence for weak continuity of Markov products is \(\partial_2\)-convergence; see \cite[Section 3.3]{Ansari-Fuchs-2025}. For bivariate copulas \(D_n\) and \(D\), this is defined by
\begin{align}
    D_n \xrightarrow[]{~\partial_2 ~ } D \qquad :\Longleftrightarrow \qquad \int_0^1 \left|\partial_2 D_n(u,t) - \partial_2 D(u,t) \right| \de t \to 0 \quad \text{for all } u \in [0,1].
\end{align}
Note that \(D_n \xrightarrow{~\partial_2~} D\) is equivalent to \(d_1(D_n,D) \to 0\) where \(d_1\) is the metric induced by \eqref{defmetr} for \(p=1\). Both modes of convergence are also equivalent to the optimal-transport based concepts of convergence in Knothe-Rosenblatt distance and to convergence in adapted Wasserstein distance; see \cite{Ansari-Wiesel-2026} for details.

As the following result shows, also upper products are continuous with respect to \(\partial_2\)-convergence. Recall that \(\partial_2\)-convergence and uniform convergence are equivalent on the class of SI copulas; see \cite[Proposition 3.6]{Siburg-2021}.

\begin{theorem}[Continuity of \(\cT\)]\label{corcont}
    For bivariate copulas \(D_n,D\in \cC\), consider the following statements.
    \begin{enumerate}[label=(\roman*)]
        \item \label{corcont1} \(D_n \xrightarrow[]{~\partial_2 ~ } D\),
        \item \label{corcont2} \(\cT(D_n) \xrightarrow[]{~\partial_2 ~ } \cT(D)\),
        \item \label{corcont3} \(\cT(D_n) \to \cT(D)\) uniformly,
        \item \label{corcont4} \(\cT^2(D_n) \xrightarrow[]{~\partial_2 ~ } \cT^2(D)\),
        \item \label{corcont5} \(\cT^2(D_n) \to \cT^2(D)\) uniformly.
    \end{enumerate}
Then, \ref{corcont1} implies \ref{corcont2}, and \ref{corcont2}--\ref{corcont5} are equivalent.
\end{theorem}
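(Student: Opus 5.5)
The plan has three parts: first show that \ref{corcont1} implies \ref{corcont2}, then use the structure of SI copulas and Theorem \ref{the_metric} to tie the remaining statements together.

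\textbf{Step 1: \ref{corcont1} implies \ref{corcont2}.} Recall that \(\partial_2\)-convergence is equivalent to \(d_1\)-convergence. We need \(d_1(\cT(D_n),\cT(D)) \to 0\). By Theorem \ref{the_metric}\,\ref{the:Tiso4c}, \(d_1(\cT(D_n),\cT(D)) = d_1(\cT^2(D_n),\cT^2(D))\). Part \ref{the:Tiso4c1} with \(p=1\) bounds the right-hand side by \(d_1(D_n,D)\), so \(\cT\) is \(d_1\)-nonexpansive on \(\cC\) and \ref{corcont1} implies \ref{corcont2}.

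\textbf{Step 2: equivalence of \ref{corcont2} and \ref{corcont4}.} The same identity
\[
d_1(\cT(D_n),\cT(D)) = d_1(\cT^2(D_n),\cT^2(D))
\]
from Theorem \ref{the_metric}\,\ref{the:Tiso4c} shows that \(\cT(D_n)\to\cT(D)\) in \(d_1\) holds if and only if \(\cT^2(D_n)\to \cT^2(D)\) in \(d_1\).

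\textbf{Step 3: equivalence with uniform convergence.} By Theorem \ref{corcharT}\,\ref{corcharT1}, \(\cT(C)\) and \(\cT^2(C)\) lie in \(\cC^\uparrow\) for every \(C\). On \(\cC^\uparrow\), \(\partial_2\)-convergence and uniform convergence are equivalent by \cite[Proposition 3.6]{Siburg-2021}. Applied to the sequences \(\cT(D_n)\) and \(\cT^2(D_n)\), this gives \ref{corcont2}\(\Leftrightarrow\)\ref{corcont3} and \ref{corcont4}\(\Leftrightarrow\)\ref{corcont5}. Together with Step 2, statements \ref{corcont2}--\ref{corcont5} are all equivalent.

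The main point of care is applying the cited equivalence from \cite{Siburg-2021}. It requires the limit to be SI as well, which holds here because \(\cT(D)\) and \(\cT^2(D)\) are SI. I would also verify that its formulation, pointwise \(\partial_2\)-convergence for all \(u\) versus \(d_1\), matches ours. If that reference only gives one direction, I would argue directly. Uniform convergence of SI copulas means convergence of the concave functions \(t\mapsto C(u,t)\), so their right derivatives converge at all continuity points of the limit. Since these derivatives are monotone and bounded, dominated convergence then yields \(L^1\)-convergence for each \(u\). The converse follows from \(|C_n(u,v)-C(u,v)|\le \int_0^1|\partial_2 C_n(u,t)-\partial_2 C(u,t)|\de t\) together with the uniform Lipschitz continuity of copulas.
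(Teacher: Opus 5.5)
Your proof is correct. Steps 2 and 3 coincide with the paper's argument: the $d_1$-isometry of Theorem \ref{the_metric}\,\ref{the:Tiso4c} gives \ref{corcont2}$\Leftrightarrow$\ref{corcont4}, and the equivalence of $\partial_2$- and uniform convergence on $\cC^\uparrow$ gives the rest.

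The difference is in Step 1. The paper obtains \ref{corcont1}$\Rightarrow$\ref{corcont2} by citing the general continuity theorem for copula products \cite[Theorem 2.23]{Ansari-2021}. You instead chain the isometry $d_1(\cT(D_n),\cT(D)) = d_1(\cT^2(D_n),\cT^2(D))$ with the contraction of Theorem \ref{the_metric}\,\ref{the:Tiso4c1} for $p=1$. Neither part of Theorem \ref{the_metric} relies on Theorem \ref{corcont}, so there is no circularity.

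Your route is self-contained within the paper, and it gives more than qualitative continuity. It yields the bound $d_1(\cT(D),\cT(E))\le d_1(D,E)$ for all $D,E\in\cC$, so $\cT$ is nonexpansive on $(\cC,d_1)$. It also gives \ref{corcont1}$\Rightarrow$\ref{corcont4} directly.

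The paper's citation only yields continuity, with no rate. In exchange, it rests on a result for general copula products (arbitrary second factor and conditional copulas), and it avoids the convex-order comparison behind Theorem \ref{the_metric}\,\ref{the:Tiso4c1}.

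Your fallback argument for the SI equivalence is also sound, though not needed. It uses convergence of derivatives of concave functions at differentiability points of the limit plus dominated convergence, and the bound $|C_n(u,v)-C(u,v)|\le\int_0^1|\partial_2 C_n(u,t)-\partial_2 C(u,t)|\de t$ with Lipschitz continuity for the converse.
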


\begin{proof}
As a consequence of the continuity result for general copula products in \cite[Theorem 2.23]{Ansari-2021}, Statement \ref{corcont1} implies \ref{corcont2}.
 The equivalence of \ref{corcont2} and \ref{corcont4} follows from the isometry in Theorem \ref{the_metric}\,\ref{the:Tiso4c}.
 The equivalence of \ref{corcont2} and \ref{corcont3} as well as the equivalence of \ref{corcont4} and \ref{corcont5} follows from the fact referenced above that for SI copulas, \(\partial_2\)-convergence and uniform convergence are equivalent.
\end{proof}

For the next result, we assume that \(\mu\) is continuous with respect to uniform convergence on the class of SI copulas. This is in particular the case for the standard choices of \(\mu\) already discussed.
Then the following corollary is a consequence of Theorem \ref{corcont} and the representation of Wasserstein correlations and rearranged dependence measures via \(\cT\) and \(\cT^2\) in \eqref{cor:T_rep}.
For a bivariate copula \(C\) with \((Y,X)\sim C\), we write \(\dW_c(C) := \dW_c(Y,X)\) and \(\sR_\mu(C) := \sR_\mu(Y,X)\).

\begin{corollary}
    If \(C_n \xrightarrow[]{\partial_2} C\), then \(\dW_c(C_n) \to \dW_c(C)\) and \(\sR_\mu(C_n) \to \sR_\mu(C)\).
\end{corollary}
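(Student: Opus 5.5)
The corollary follows by composing Theorem \ref{corcont} with the representations in Theorem \ref{cor_TWR}. Assume \(C_n \xrightarrow{\partial_2} C\). By Theorem \ref{corcont}, statement \ref{corcont1} implies \ref{corcont2}, and \ref{corcont2} is equivalent to \ref{corcont3} and \ref{corcont5}. Hence \(\cT(C_n)\to\cT(C)\) and \(\cT^2(C_n)\to\cT^2(C)\) uniformly on \([0,1]^2\). By Theorem \ref{corcharT}\,\ref{corcharT1}, all of these copulas are SI.

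For the rearranged dependence measure, Theorem \ref{cor_TWR} gives \(\sR_\mu(C_n)=\mu(\cT^2(C_n))\). We have just shown that \(\cT^2(C_n)\to\cT^2(C)\) uniformly within \(\cC^\uparrow\). Since \(\mu\) is assumed continuous with respect to uniform convergence on SI copulas, \(\sR_\mu(C_n)\to\sR_\mu(C)\) follows directly.

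For the Wasserstein correlation, Theorem \ref{cor_TWR} shows that only the numerator \(\int c\,\de\cT(C_n)\) depends on \(n\). The denominator is a fixed positive constant, because \(h\) is strictly convex at \(0\) with \(h(0)=0\). Uniform convergence of copulas is equivalent to weak convergence of the associated probability measures on \([0,1]^2\). The cost \(c(u,u')=h(u'-u)\) is continuous, since \(h\) is convex and finite on \(\R\), and it is bounded on the compact square. The portmanteau theorem therefore gives \(\int c\,\de\cT(C_n)\to\int c\,\de\cT(C)\), and hence \(\dW_c(C_n)\to\dW_c(C)\).

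The only points that need care are the equivalence between uniform convergence of distribution functions and weak convergence on the compact square, and the continuity and boundedness of \(c\) on \([0,1]^2\). Both are standard, so the substantive work is already contained in Theorem \ref{corcont}, in particular its step from \(\partial_2\)-convergence to convergence of \(\cT^2\), which uses the \(d_1\)-isometry of Theorem \ref{the_metric}.
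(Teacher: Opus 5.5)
Your proposal is correct and follows the paper's route: the paper obtains the corollary directly from Theorem \ref{corcont}, the representations in \eqref{cor:T_rep}, and the assumed continuity of \(\mu\) under uniform convergence on SI copulas. Your argument only spells out the step the paper leaves implicit, namely that uniform convergence of \(\cT(C_n)\) gives convergence of \(\int c\,\de\cT(C_n)\) because \(c\) is continuous and bounded on \([0,1]^2\) and the denominator is a fixed nonzero constant. (As an aside, \(\cT^2(C_n)\to\cT^2(C)\) in \(d_1\) also follows at once from the contraction in Theorem \ref{the_metric}\,\ref{the:Tiso4c1}.)
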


The following example demonstrates that the upper product transform \(\cT\) is not weakly continuous, in the sense that \(C_n\to C\) uniformly does not imply \(\cT(C_n)\to \cT(C)\) uniformly.
This example is the analogue of \cite[Example 2]{Ansari-Fuchs-2025}, where it is shown that the Markov product likewise fails to be weakly continuous.

\begin{example}[Lack of weak continuity of \(\cT\)]
    The independence copula can be approximated uniformly by a sequence \((C_n)_{n\in \N}\) of so-called shuffle-of-min copulas; see \cite[Theorem 1]{Kimeldorf-1978}. This implies \(C_n\to \Pi\) uniformly and, for \((Y_n,X_n)\sim C_n\), \(Y_n\) perfectly depends on \(X_n\). Hence, \(C_n\) are perfect dependence copulas and thus \(\cT(C_n) = \Pi\) for all \(n\). Since \(\cT(\Pi) = M\), it follows that \(\cT\) is \emph{not} weakly continuous.
\end{example}

\section{Proofs of Section \ref{secIntro}}\label{sec_pr1}

\begin{proof}[Proof of Theorem \ref{corcharT}.]
\ref{corcharT1}: The first equality is a consequence of Proposition \ref{thm:main_trafo}. The second equality holds true due to \eqref{eq:CCSI}.\\
\ref{corcharT2} and \ref{corcharT3}:
    We have \(\Pi\leq_{\partial_2 S} D \leq_{\partial_2 S} C^*\) for all \(D\in \cC\) and for all perfect dependence copulas \(C^*\) where the inequalities are strict if and only if \(D\ne \Pi\) resp. \(D\) is not a perfect dependence copula. Further, we have \(\Pi\leq_{lo} E \leq_{lo} M\) for all \(E\in \cC^\uparrow\) with strict inequalities if and only if \(E\ne \Pi\) resp. \(E\ne M\). Then the statement follows from Theorem \ref{prop:ord_T}.
\end{proof}

\begin{proof}[Proof of Proposition \ref{prop:repdW}.]
Statement \ref{prop:repdW1} follows by applying the one-dimensional optimal transport identity in \eqref{def:OT} conditionally on \(X=x\) and then integrating with respect to \(x\).\\
 \ref{prop:repdW2}:
From disintegration we obtain
\begin{align*}
    P\big(F_{Y|X}^{-1}(U) \leq v, U\leq u\big)
    &= \int_0^1 P\big( F_{Y|X = t}^{-1}(U) \leq v, U\leq u ) \de t
    = \int_0^1 \min\{F_{Y|X = t}(v) , u\} \de t \\     &= \int_0^1 \min\{\partial_2 C_{Y,X}(v,t),u\} \de t =  C_{Y,X}\vee \Pi\, (v,u)
\end{align*}
where we use \(X\sim \cU(0,1)\) as well as independence of \(X\) and \(U\) for the first equality. The second equality follows with the identity \(F^{-1}(w)\leq x\) if and only if \(w\leq F(x)\) for any distribution function \(F\). For the third equality, we use that \(X,Y\sim \cU(0,1)\), which yields for all \(v\) that \(F_{Y|X=t}(v) = \partial_2 C(v,t)\) for \(\lambda\)-almost all \(t\in [0,1]\); see \cite[Equation (2.9.1)]{Nelsen-2006}.
\end{proof}

\begin{proof}[Proof of Theorem \ref{prop13}.]
    The statement follows from Corollary \ref{cor_TeqS} and Proposition \ref{thm:main_trafo}.
\end{proof}

\begin{proof}[Proof of Theorem \ref{cor_TWR}.]
    By \eqref{eqthm:RepWasserstein}, the Wasserstein correlation admits a representation via the reflection operator \(\cS\) through \(\cS(C_{Y,X}^\uparrow)\). Due to Corollary \ref{cor_TeqS}, \(\cS\) coincides with \(\cT\) on the class of SI copulas. Hence, the statement for the Wasserstein correlation follows from \(\cS(C_{Y,X}^\uparrow) = \cT(C_{Y,X}^\uparrow) = \cT(C_{Y,X})\) where the last equality holds true by Theorem \ref{prop13}. \\
    Due to \eqref{eq:rep_rearrdepmeasure}, the rearranged dependence measure admits a representation via \(\cS(C_{Y,X}\vee \Pi) \).
    Since \(C_{Y,X}\vee \Pi\) is SI (see Theorem \ref{corcharT}\,\ref{corcharT1}), we obtain from Proposition \ref{thm:main_trafo} and Theorem \ref{prop13} that \(\cS(C_{Y,X}\vee \Pi) = C_{Y,X}^\uparrow = \cT^2(C_{Y,X}) \). This proves the representation of the rearranged dependence measure.\\
    The characterizations of the values \(0\) and \(1\) via the upper product transform \(\cT\) follow from Theorem \ref{corcharT} together with the fact that Wasserstein correlations and rearranged dependence measures satisfy Axioms \ref{prop1}--\ref{prop3}.
\end{proof}

\begin{proof}[Proof of Lemma \ref{prop_meta}.]
Since \(\mu\) maps into \([0,1]\), property \ref{prop1} is trivial.

To prove characterization of independence, by surjectivity, there exists \(C_0\in\cC\) such that
\(\mu(C_0)=0\). By property \ref{prop_meta1}, we have \(\Pi\preccurlyeq C\) for all \(C\in\cC\).
In particular, \(\Pi\preccurlyeq C_0\), and monotonicity of \(\mu\) yields \(\mu(\Pi)\leq \mu(C_0)=0\). Since \(\mu\) takes values in \([0,1]\), it follows that \(\mu(\Pi)=0\).\\
Now, let \(C\neq\Pi\). We claim that \(C\not\preccurlyeq\Pi\).
Indeed, if \(C\preccurlyeq\Pi\), then, since \(\Pi\preccurlyeq D\) for every
\(D\in\cC\), transitivity would imply \(C\preccurlyeq D\) for all \(D\in\cC\).
By \ref{prop_meta1}, this would imply \(C=\Pi\), a contradiction.
Hence, \(\Pi\preccurlyeq C\) and \(C\not\preccurlyeq\Pi\).
Strict \(\preccurlyeq\)-monotonicity at independence therefore gives \(\mu(C)>\mu(\Pi)=0\)
Consequently, \(\mu(C)=0\) if and only if \(C=\Pi\), proves
Axiom \ref{prop2}.

To prove characterization of perfect dependence, by surjectivity, there exists \(C_1\in\cC\) such that \(\mu(C_1)=1\). Let \(E\) be a perfect dependence copula.
By \ref{prop_meta2}, \(D\preccurlyeq E\) for all \(D\in\cC\) and hence, in particular, \(C_1\preccurlyeq E\). Thus,
\(1=\mu(C_1)\leq\mu(E)\leq 1\) so that \(\mu(E)=1\).\\
Conversely, let \(C\) not be a perfect dependence copula and let
\(E\) be any perfect dependence copula. By \ref{prop_meta2},
\(C\preccurlyeq E\).
We claim that \(E\not\preccurlyeq C\). Indeed, if \(E\preccurlyeq C\), then,
since \(D\preccurlyeq E\) for every \(D\in\cC\), transitivity would yield
\(D\preccurlyeq C\) for all \(D\in\cC\).
By \ref{prop_meta2}, this would imply that \(C\) is a perfect
dependence copula, a contradiction. Hence, \(C\preccurlyeq E\) and \(E\not\preccurlyeq C\),
and strict \(\preccurlyeq\)-monotonicity at perfect dependence gives \(\mu(C)<\mu(E)=1\).
This proves Axiom \ref{prop3}.
\end{proof}

\begin{proof}[Proof of Proposition \ref{cor_C2_C3}]
    The statement follows from Theorem~\ref{corcharT} and the inequality in \eqref{eq_si_pqd}.
\end{proof}

\section{Conclusion}\label{sec7}

We have shown that the upper product, which has received relatively little attention in the literature, naturally underlies the construction of dependence measures. More specifically, the upper product transform \(\cT(C)=C\vee\Pi\) characterizes independence and perfect functional dependence. Moreover, it models conditional comonotonicity---a concept underlying Wasserstein correlations, rearranged dependence measures, measures of sensitivity, and related dependence measures. We have shown that upper products and increasing rearranged copula permute via a reflection of copula derivatives along the main diagonal. The rather counterintuitive behavior of upper products, illustrated in particular in Figure \ref{fig3}, motivates further investigation of their structural properties and may thereby contribute to a better understanding of the behavior of dependence measures.

\section*{Acknowledgement}

The first author was funded in whole by the Austrian Science Fund (FWF) {[10.55776/PAT1669224]} project \emph{Stochastic orders for functional dependence}.

\bibliographystyle{amsplain}
\bibliography{Upper_Products_arxiv}

\end{document}